\documentclass{amsart} 

\theoremstyle{remark} 
\newtheorem{theorem}{Theorem}
\newtheorem{lemma}{Lemma}

\newtheorem{corollary}{Corollary}
\newtheorem{remark}{Remark}
\newtheorem{definition}{Definition}
\newtheorem{example}{Example}
 
\newtheorem{assumption}{Assumption}

\numberwithin{equation}{section}

\usepackage{graphicx} 
\usepackage{extarrows}
\usepackage{latexsym}
\usepackage{amsmath}
\usepackage{amssymb}
\usepackage{amsfonts}
\usepackage{verbatim}
\usepackage{mathrsfs}
\usepackage{color}
\usepackage{xcolor}
\usepackage[colorlinks,citecolor=blue,urlcolor=blue]{hyperref}
\usepackage{soul}

\begin{document}

\title[Well-posedness, Regularity, and Strong Approximations of Superlinear SRDE]
    {Well-posedness, Regularity, and Strong Approximations of Superlinear Stochastic Reaction-Diffusion Equation}
    
\author{Zhihui LIU}
\address{Department of Mathematics \& National Center for Applied Mathematics Shenzhen (NCAMS) \& Shenzhen International Center for Mathematics, Southern University of Science and Technology, Shenzhen, 518055, P. R. China}
% \curraddr{}
\email{liuzh3@sustech.edu.cn} 

%    \thanks will become a 1st page footnote.
\thanks{The author is supported by the National Natural Science Foundation of China (NNSFC), No. 12671474, Basic and Applied Basic Research Foundation of Guangdong Province, No. 2024A1515012348, and Shenzhen Basic Research Special Project (Natural Science Foundation) Basic Research (General Project), No. JCYJ20240813094919026.}

%    General info
\subjclass[2020]{Primary 60H35; Secondary 60H15, 65M60}

\date{}
 
\keywords{Stochastic reaction-diffusion equation,
stochastic Allen--Cahn equation,
superlinear diffusion coefficient,
uniform-in-time strong convergence rate, 
tamed scheme}

\begin{abstract}
This paper develops a general framework for the well-posedness, regularity, and strong approximation of the stochastic reaction--diffusion equation (SRDE) with superlinear drift and diffusion coefficients. We first extend the well-posedness results in \emph{W. Liu and M. R\"ockner, J. Funct. Anal., 2902--2922, 2010} and \emph{W. Liu, J. Differential Equations, 572--592, 2013} to the case of superlinear diffusion in the Gelfand triple \(V \hookrightarrow H \hookrightarrow V^*\), with \(V\) equipped with the norm \(\|\cdot\|_V\), and derive a moment estimate by establishing a new It\^o formula for \(\|X\|_V^p\) with general \(p \ge 2\). We then apply this abstract result to the SRDE, establish higher spatial regularity \(\dot H^{1+\gamma}\) for any \(\gamma \in [0,1]\) whenever the initial datum lies in the same Sobolev space, and obtain temporal H\"older regularity. Finally, we construct a family of tamed finite element methods (tamed-FEMs) for the SRDE under general assumptions on the tamed functions, derive their long-time unconditional stability, and establish optimal strong convergence rates. To our knowledge, this is the first strong approximation result for SPDEs with superlinear diffusion coefficients.
\end{abstract}

\maketitle

\section{Introduction}\label{sec1}

Consider the SPDE 
\begin{align} \label{spde} 
\,\mathrm{d}X(t,\xi)=[\Delta X(t,\xi)+ f(X(t,\xi))] \,\mathrm{d}t + g(X(t,\xi))\,\mathrm{d}W(t,\xi), 
& \quad t>0,~ \xi \in \mathcal O,    
\end{align} 
driven by an infinite-D \(Q\)-Wiener process \(W\) with the (homogeneous) Dirichlet boundary condition (DBC) \(X(t,\xi)=0\), \((t, \xi)\in [0, \infty) \times \partial \mathcal O\).
Here \(\mathcal O \subset \mathbb{R}^d\), \(d=1,2,3\), is a bounded, open domain with piecewise smooth boundary \(\partial \mathcal O\), \(f, g: \mathbb R \to \mathbb R\) are measurable functions which grow polynomially and satisfy certain monotone-type conditions, and the initial datum \(X(0, \cdot)=X_0(\cdot)\) is a Hilbert space valued random variable.
In modeling reaction-diffusion, one classical choice of the drift function is \(f(\xi)=-|\xi|^p \xi\), \(\xi \in \mathbb R\), for general \(p>0\); another is \(f(\xi)=\xi-\xi^3\), \(\xi \in \mathbb R\), corresponding to the stochastic Allen--Cahn equation (SACE), arising from phase transition in materials science by stochastic perturbation \cite{Fun16}.

\subsection{Numerical Motivations}
\label{sec1.1}

There exists a general theory of strong convergence analysis for full discretizations of SPDEs with Lipschitz coefficients; we refer to \cite{ACLW16, Bre13, CHL17, CHL18, JKW11, JR15, KW19, WGT14} and the references therein.
In the past two decades, researchers have investigated strong approximations of SPDEs with non-Lipschitz coefficients using monotone-type conditions; see, e.g., \cite{CH19, FLZ17, GM09, HS23, Liu26, LQ20, LQ21, LS26, Wan20} for the stochastic Allen--Cahn equation (SACE), \cite{ABNP21, QCW24, QW20} for the stochastic Cahn--Hilliard equation, \cite{Bou18, BMPW26, BP24} for stochastic Navier--Stokes equations, and \cite{BC22, CHL17b, CHLZ17, CHLZ19} for the stochastic nonlinear Schr\"odinger equations.

The authors in \cite{BHJKLS19} showed that the classical Euler--Maruyama (EM) scheme and its Galerkin-based full discretizations, when applied to Eq. \eqref{spde} with superlinear drift (driven by additive white noise), lead to blow-up in the \(p\)-th moment for all \(p \geq 2\).
This, in particular, indicates that EM is not appropriate for temporally discretizing Eq. \eqref{spde} with superlinear growth coefficients.

To overcome this difficulty, researchers have primarily pursued two distinct approaches.
The first is to apply an implicit scheme, including the backward EM scheme or its modification (see, e.g., \cite{FLZ17, GM09, HS23, LQ20, LQ21}) and the stochastic theta method in \cite{LL25}.
The main ingredient is that these implicit schemes are more stable than EM in the sense that they can inherit the monotonicity structure of the original SPDE.

On the other hand, schemes that treat the nonlinearity explicitly are often preferred to reduce computational costs. For SACE driven by additive noise, the authors in \cite{BGJK23, BJ19} constructed a truncated explicit scheme, combined with spatial Galerkin approximations, that truncates the nonlinearity by a factor \(\mathbf{1}_{\Omega_\tau}\), where \(\Omega_\tau\) denotes a set for which the numerical solution at the underlying step is sufficiently small.
This judgment-based truncation strategy was then generalized to a tamed argument in \cite{Wan20}, where the author tamed the nonlinearity by a factor \((1+\tau \|\mathcal P_N F(Y_m^N)\|)^{-1}\), and the norm of the underlying numerical solution was used.

It is known that, to construct such explicit schemes that inherit the monotonicity structure and strongly converge to the solution of Eq. \eqref{spde}, unconditional stability and the related Lyapunov structure play key roles.
Most recently, the author and Shen in \cite{LS26} first constructed a family of Galerkin-based explicit-in-nonlinearity tamed schemes, called tamed-FEMs, for a general class of Eq. \eqref{spde} with superlinear drift and Lipschitz diffusion driven by multiplicative noise. These tamed-FEMs are more efficient to implement because they tame the nodal values rather than their norms at each step.
They derived a Lyapunov estimate for \eqref{t-fem} with a specified choice of tamed operators, which established the unconditional stability of \(\{Y_n^h\}\) over an infinite time horizon, and established sharp strong convergence rates over any finite time horizon.

After the construction in \cite{LS26}, many research groups have focused on tamed methods for superlinear SPDEs; see \cite{CL26, CLW26, JW25, WC26}.
For example, these strong convergence rates were generalized in \cite{CL26} to be uniform in time, provided that a certain strong dissipativity condition holds; see also \cite{CLW26} for uniform-in-time weak error and ergodic estimates.
Whether such a Lyapunov estimate and strong convergence rates for the tamed-FEM applied to \eqref{spde} with superlinear diffusion remain valid is still open.
Among others, one of the main difficulties comes from the well-posedness and regularity, especially the moment estimates, of Eq. \eqref{spde} with superlinear diffusion.

\subsection{Mathematical Motivations}

Eq. \eqref{spde} can be rewritten as an infinite-D SDE of the form
\begin{equation}\label{see}
\,\mathrm{d}X(t)=D(X(t))\,\mathrm{d}t+G(X(t))\,\mathrm{d}W(t), \quad t>0; \quad X(0)=X_0,
\end{equation}
which admits a unique solution in the variational framework (a Gelfand triple \(V\hookrightarrow H\hookrightarrow V^*\) exists) if the corresponding Nemytskii-type drift and diffusion operators satisfy the classical monotone and coercivity conditions (cf. \cite{LR15}). In recent years, this variational approach has also been used extensively in mathematical and numerical analysis; see, e.g., \cite{KR79, Par75, RRW07} for general well-posedness results, \cite{GM09} for numerical approximation schemes, and \cite{Wan07} for the dimension-free Harnack inequality and resulting ergodicity properties of the associated transition semigroups.

As pointed out in \cite[Remark 1.1 (3)]{LR10}, if one considers an SRDE of the form \eqref{spde}, i.e., \(D(u) = \Delta u + f(u)\), then for verifying the classical growth condition \(\|D(v)\|_{V^*} \le C (1 + \|v\|_V^{\alpha-1})\) with some \(\alpha>1\) (see \cite[\textup{(H4)} in p. 70]{LR15}) one has \(\alpha = 2\). Hence, this condition would imply that \(f\) has at most linear growth.
It was then generalized in \cite{LR10} to some polynomial growth drift satisfying
\begin{equation}\label{lr}
\|D(v)\|_{V^*} \le C(1+\|v\|_V^{\alpha-1}) (1+\|v\|_H^\beta),
\end{equation}
for some \(\beta >0\); see \cite[\textup{(H4)}]{LR10}.
We note that in the 2-D or 3-D case, there is a restriction on the polynomial order \(q+1\) in \cite[Example 3.2]{LR10}: \(q<2\).
Indeed, for Eq. \eqref{spde} or the SPDE given in \cite[Example 3.2]{LR10} with Lipschitz diffusion, with \(V=\dot H^1\), \(H=L_\xi^2\), and \(\alpha=q+1\), it is not difficult to show that \eqref{lr} does not hold for \(q \ge 2\) and \(d=2,3\).

To establish the well-posedness of Eq. \eqref{see} with Lipschitz diffusion in the space \(V\), \cite{Liu13} proposed a Lyapunov-type (one-sided linear growth) condition \(\langle D(v), v \rangle_{V} \le C (1 + \|v\|_V^2)\) on the drift, which holds on an orthogonal set in \(V\) that constitutes an orthonormal basis of \(H\).
They conjectured that the solution is right continuous in \(V\); see \cite[Remark 1.2(2)]{Liu13}.
We also note that, for Eq. \eqref{spde} with linear drift and superlinear diffusion of the form \(\gamma_0 |X|^{1+\gamma}\) with a constant \(\gamma>0\), the authors in \cite{CH21, Mue91} used the semigroup (or equivalent Green function) method.
Their strategy for handling the nonlinearity of the diffusion was to obtain a sharp estimate for the general Lipschitz case and apply it to the superlinear case. In the variational framework, to our knowledge, there is no study on SPDEs with superlinear diffusion coefficients.

Besides the numerical requirement discussed in Section \ref{sec1.1}, this provides another motivation for studying the well-posedness of SPDEs with superlinear drift and diffusion coefficients. Instead of the previous two types of growth conditions, we add a term involving a power of the \(\|\cdot\|_{V_0}\)-norm in \eqref{D-grow} for another Banach space \(V_0\) (say, $L_\xi^{q+2}$) with \(V \subset V_0\), which removes the restriction on \(q\) and allows polynomial growth diffusion; see \textup{(H4)} in Section \ref{sec2}.
In particular, we propose a family of application-friendly conditions (see Assumption \ref{ap}) on the functions \(f\) and \(g\) in Eq. \eqref{spde}. To our knowledge, conditions of this type have not been proposed before.

\subsection{Main Results}

Our well-posedness and moment estimate results in Theorems~\ref{tm-well} and \ref{tm-mom} give an affirmative answer to the continuity conjecture in \cite{Liu13}, and generalize those in \cite{LR10} and \cite{Liu13} to the polynomial growth diffusion case.

Back to numerical aspects, we also propose a family of general conditions (see \eqref{coe-tau}) such that (s.t.) the tamed-FEMs \eqref{t-fem} are unconditionally stable (see Theorem \ref{tm-lya}).
Moreover, provided that certain growth conditions (see Assumptions \ref{ap-tau}-\ref{ap-tau'}) for the tamed functions and their derivatives hold, we prove that the associated auxiliary process \eqref{aux} converges towards the solution to Eq. \eqref{spde} with sharp strong convergence rates in \(1,2,3\)-D cases.
All these conditions on the tamed functions are comparable to, and more delicate than, the finite-D ones proposed by the author and Wu in \cite{LWWZ25} (see Remark \ref{rk-ex}).

To handle the error between the auxiliary process and the tamed-FEM approximation, we establish certain Sobolev and H\"older regularities of the exact solution and the uniform Sobolev regularity of the auxiliary process (see Theorem \ref{tm-reg+} and Lemma \ref{tm-aux}, resp.), and use a variational decomposition of the error term different from that in \cite{CL26, LS26}. Under the current conditions, the auxiliary process is shown to enjoy Sobolev regularity in the 1-D case (see Remark \ref{rk-aux}). Hence, the tamed-FEM \eqref{t-fem} achieves optimal strong convergence rates for Eq. \eqref{spde} in the 1-D case, and these rates are uniform in time under certain dissipativity conditions (the same as those in \cite{LS26} for the Lipschitz diffusion case). To our knowledge, this is the first strong approximation result for SPDEs with superlinear diffusion coefficients. 

The paper is organized as follows. In the rest of Section \ref{sec1}, we give frequently used notations and conventions. In Section \ref{sec2}, we establish the well-posedness and moment estimates of monotone SDEs with superlinear drift and diffusion operators. These results are generalized and applied to the SRDE \eqref{spde} in Section \ref{sec3}, where we propose explicit conditions on its coefficients. Finally, in Section \ref{sec4}, we construct a family of tamed-FEMs for the SRDE under general assumptions on the tamed functions, derive their long-time unconditional stability, and establish optimal strong convergence rates.

\subsection{Notations}

Let \((\Omega,\mathcal F,\mathbb P)\) be a complete probability space with a normal filtration \(\mathbb F=\{\mathcal F_t\}_{t\ge0}\).
For \(n\in\mathbb N\), denote by \(\mathbb E_n[\cdot]:=\mathbb E[\cdot\mid\mathcal F_{t_n}]\) the conditional expectation with respect to \(\mathcal F_{t_n}\).

For a constant \(p\in(1,\infty)\), let \(p^*\) denote its conjugate exponent, i.e., \(1/p+1/p^*=1\), and let \(p^+\) and \(p^-\) denote numbers sufficiently close to \(p\) from above and below, resp. Let \((L_\xi^p,\|\cdot\|_{L_\xi^p})\), \((L^p_t,\|\cdot\|_{L^p_t})\), and \((L_\omega^p,\|\cdot\|_{L_\omega^p})\) be the usual real-valued Lebesgue spaces on \(\mathcal O\), \([0,T]\) with \(T>0\) fixed, and the sample space \(\Omega\), resp.
For convenience, we frequently use the temporal, sample-path, and spatial mixed norms \(\|\cdot\|_{L^p_\omega L^{p_1}_t L_\xi^{p_2}}\) in different orders.
We use \(\rightharpoonup\) and \(\overset{*}\rightharpoonup\) to denote weak and weak-star convergences, resp., in various Banach spaces.
We also denote by \(\mathcal C_t^\gamma L_\omega^p \dot H^\beta\) the space of \(\mathbb F\)-adapted processes \(Z\) s.t.
$\|Z\|_{\mathcal C_t^{\gamma} L_\omega^p \dot H^\beta}
:= \sup_{0\le s < t <\infty} \|Z(t)-Z(s)\|_{L_\omega^p \dot H^\beta} |t-s|^{-\gamma}<\infty$.
We omit the integration variable when an integration is present to lighten the notation.

Throughout, we use \(C\), \(c\), \(c_1\), etc., and \(\epsilon\), \(\epsilon_1\), etc., to denote generic positive constants and sufficiently small positive constants, resp., independent of $T$ and various discrete parameters, which may differ from one occurrence to another.

\section{Well-posedness and Moment Estimates of Monotone SDEs with Superlinear Drift and Diffusion Operators}
\label{sec2}

\subsection{Preliminaries} 
\label{sec2.1}

Let \((H, \|\cdot\|, \langle\cdot,\cdot\rangle)\) be a separable Hilbert space, identified with its dual \(H^*\) via the Riesz isomorphism, and let \((V, \|\cdot\|_V, \langle\cdot,\cdot\rangle_V)\) be another Hilbert space continuously and densely embedded in \(H\). Then we obtain the Gelfand triple $V\hookrightarrow H\hookrightarrow V^*$, where \(V^*\), with norm \(\|\cdot\|_{V^*}\), is the dual of \(V\) with respect to \(\langle\cdot,\cdot\rangle\), and the dual pairing \(\langle\cdot,\cdot\rangle_{V^*,V}\) satisfies
$\langle u,v\rangle_{V^*,V}=\langle u,v\rangle$ for all $u\in H$ and $v\in V$.

Let \(V_0\) be a Banach space continuously and densely embedded in \(H\) s.t. \(V\hookrightarrow V_0\). Then we obtain another Gelfand triple
$V_0\hookrightarrow H\hookrightarrow V_0^*$, 
where \(V_0^*\), with norm \(\|\cdot\|_{V_0^*}\), is the dual of \(V_0\) with respect to \(\langle\cdot,\cdot\rangle\), and
$\langle u,v\rangle_{V_0^*,V_0}=\langle u,v\rangle$ for all $u\in H$ and $v\in V_0$.
Moreover, since \(V\hookrightarrow V_0\),
$\langle u,v\rangle_{V_0^*,V_0}=\langle u,v\rangle_{V^*,V}$ for all $u\in V_0^*$ and $v\in V$.

Let \((U,\|\cdot\|_U,\langle\cdot,\cdot\rangle_U)\) be a separable Hilbert space, and \(Q\) be a self-adjoint, positive semidefinite, trace-class operator on \(U\) with eigenpairs \(\{(q_m,g_m)\}_{m=1}^\infty\), where \(\{g_m\}_{m=1}^\infty\) is an orthonormal basis of \(U\). Set
$U_0:=Q^{1/2}U$, with norm \(\|\cdot\|_{U_0}\) and inner product $\langle u,v\rangle_{U_0}:=\langle Q^{-1/2}u,Q^{-1/2}v\rangle_U$, $u,v\in U_0$.
Denote by
$(\mathcal L_2^0:=HS(U_0;H),\|\cdot\|_{\mathcal L_2^0})$ and
$(\mathcal L_2^1:=HS(U_0;V),\|\cdot\|_{\mathcal L_2^1})$
the spaces of Hilbert--Schmidt operators from \(U_0\) to \(H\) and \(V\), resp. 
Let \(W=\{W(t):t\ge0\}\) be a \(U\)-valued \(Q\)-Wiener process on \((\Omega, \mathcal F, \mathbb F, \mathbb P)\); that is, there exist mutually independent 1-D Brownian motions \(\{\beta_m\}_{m=1}^\infty\) s.t.
$W=\sum_{m=1}^\infty \sqrt{q_m}\,g_m\beta_m$.
Equivalently,
$W=\sum_{m=1}^\infty \hat g_m\beta_m$, with
$\hat g_m:=\sqrt{q_m}\,g_m$, $m\in\mathbb N_+$, 
so \(W\) is also a \(U_0\)-cylindrical Wiener process. 

Let
$D_1: V \to V^*$, $D_0: V_0 \to V_0^*$, and $G: V \to \mathcal L_2^0$
be measurable operators (progressively measurable if they depend on \((t,\omega)\)) and set \(D=D_1+D_0\). 
We impose the following conditions on \(D\) and \(G\), where \(p\ge 2\) and \(\alpha_0\ge 2\) are constants. 
\begin{itemize} 
\item[\textup{(H0)}] ({\bf Orthogonality}) There exists an orthogonal set $\{e_{1},e_2,\cdots \}$ in $V$ s.t. it constitute an orthonormal basis of $H$.

    \item[\textup{(H1)}] ({\bf Hemicontinuity}) For all $u,v,w\in V$ and $i=1,2$, $\lambda \langle  D_i(u+\lambda v), w\rangle_{V^*, V}$ is continuous on $\mathbb R$.
    
    \item[\textup{(H2)}] ({\bf Monotonicity}) There exists a constant $c_0$ s.t. for all $u, v \in V$,
    \begin{align} \label{mon}
   & 2 \langle D(u)-D(v), u-v\rangle_{V^*, V} 
   + (p-1) \|G(u)-G(v)\|_{\mathcal L_2^0}^2 
   \le c_0 \|u-v\|^2.
    \end{align}     
    
    \item[\textup{(H3)}] ({\bf Coercivity}) For any $n \in \mathbb{N}_+$, the operator $D$ maps $H^n:=\{e_k\}_{k=1}^n$ into $V$, and there exist constants $c_1, c_2 \in \mathbb R$ and $c_3>0$ s.t. for all $v \in H^n$,
\begin{align} \label{coe}
2 \langle D(v), v \rangle_{V} + (p-1) \|G(v)\|_{\mathcal L_2^1}^2 
\le c_1+ c_2 \| v \|_{V}^2 - c_3 \| v \|_{V_0}^{\alpha_0}.
\end{align}

    \item[\textup{(H4)}] ({\bf Growth}) There exist positive constants $c_4$, $c_5$, and $c_6$ s.t. for all $v \in V$,
    \begin{align} 
    \|D_1(v)\|_{V^*}  \le c_4 (1 + \|v\|_V), & \quad  
\|D_0(v)\|_{V_0^*} \le c_5 (1 + \|v\|_{V_0}^{\alpha_0-1}), \label{D-grow} \\
\|G(v)\|_{\mathcal L_2^1}^2 
  \le & c_6 (1 + \|v\|_V^2 + \|v\|_{V_0}^{\alpha_0}). \label{G-grow}  
    \end{align}    
\end{itemize}

Although our methods and results can naturally be extended to \((t,\omega)\)-dependent coefficients, for simplicity we assume throughout that all operators and coefficients are independent of \((t,\omega)\). We also note that the results in Section~\ref{sec2} apply to the case where the exponent \(2\) on \(\|v\|_V\) is replaced by a constant \(\alpha>1\); the SRDE \eqref{spde} corresponds to \(\alpha=2\).

For brevity, we set
\begin{align*}
I:=\mathcal C_t L_\omega^2 V, \quad 
& J_0:=L_{t, \omega}^2 \mathcal L_2^0, \quad 
J:=L_{t, \omega}^2 \mathcal L_2^1, \\ 
K:=L_{t, \omega}^2 V, \quad
K^*=L_{t, \omega}^2 V^*, \quad 
& K_0:=L_{t, \omega}^{\alpha_0} V_0, \quad 
K_0^*=L_{t, \omega}^{\alpha_0^*} V_0^*.
\end{align*}

\begin{definition}
A continuous \(V\)-valued \(\mathbb{F}\)-adapted process \(X=(X(t))_{t\in[0,T]}\) is called a (variational) solution of Eq.~\eqref{see} if \(X\in I\cap K\cap K_0\) and ($\mathbb P$-a.s.)
\begin{align*}
X(t)=X_0 + \int_0^t D(X)\,\mathrm{d}s+\int_0^t  G(X)\,\mathrm{d}W,\quad t \in [0, T].
\end{align*} 
By pathwise uniqueness of the solution, we mean that, for any other solution \(Y\) of Eq.~\eqref{see} with the same initial datum \(Y(0)=X_0\),
$\mathbb P(X(t)=Y(t), \forall ~ t\in[0,T])=1.$
\end{definition}

\subsection{Existence of Solution in $V$}
\label{sec2.2}

By \textup{(H0)}, \(\{e_k\}_{k=1}^\infty\) is an orthogonal set in \(V\) and constitutes an orthonormal basis of \(H\).
For each \(n\in\mathbb N\), let \(H_n:=\operatorname{span}\{e_k\}_{k=1}^n\) and let \(P_n:V^*\to H_n\) be the generalized orthogonal projection defined by
\begin{align*}
P_n y:=\sum_{i=1}^n \langle  y, e_i \rangle_{V^*, V} e_i,\quad y\in V^*.
\end{align*}
Then for all $y, z \in V^*$ and $w \in V$,
\begin{align*}
\langle  z,P_n y \rangle_{V^*, V} = \langle  y, P_n z \rangle_{V^*, V}, \quad 
\langle  P_n y, w \rangle_{V^*, V} = \langle  y, P_n w \rangle_{V^*, V}.
\end{align*}
Let $U_0^n:=\text{Span} \{{\hat g}_{1}, \cdots, {\hat g}_{n}\}$ and $\hat{P}_n: U_0 \to U_0^n$ be the orthogonal projection operator, and set 
\begin{align*}
W^n(t):=\hat{P}_n W(t)=\sum_{i=1}^n \langle  W(t), {\hat g}_i\rangle_{U_0} {\hat g}_i, \quad n \in \mathbb N.
\end{align*} 
Then \(W^n\) is an \(n\)-D \(U_0^n\)-valued Wiener process on \((\Omega,\mathcal F,\mathbb F,\mathbb P)\).

For each \(n\in\mathbb N\), we consider the following \(n\)-D SDE on \((H_n,\|\cdot\|_V,\langle\cdot,\cdot\rangle_V)\), driven by the \(n\)-D \(U_0^n\)-valued Wiener process \(W^n\), with \(X^n_0:=P_nX_0\):
\begin{align} \label{Xn}
\,\mathrm{d}X^n(t)=P_n D(X^n(t))\,\mathrm{d}t+P_nG(X^n(t))\,\mathrm{d}W^n(t), 
~ t \in [0, T].  
\end{align}
By finite-D SDE theory, under \textup{(H1)}--\textup{(H3)}, Eq.~\eqref{Xn} has a unique \(H_n\)-valued $\mathbb F$-adapted continuous strong solution \(X^n\) s.t. 
$\mathbb E\sup_{t\in[0,T]}\|X^n(t)\|_V^2<\infty$, see, e.g., \cite[Theorem 3.1.1]{LR15}.

Below, we provide uniform a priori estimates for the solution to Eq.~\eqref{Xn}, which are needed to derive the existence of a solution to Eq.~\eqref{see} in \(V\).

\begin{lemma}\label{lm-Xn}
Let \(p\ge2\) and assume that \textup{(H0)}--\textup{(H4)} hold and \(X_0\in L^p(\Omega;V)\). Then there exists a positive constant \(C\) s.t.
\begin{align}\label{est-Xn}
& \sup_{t \in [0, T]} \mathbb E \|X_{t}^{n}\|_{V}^{p} + \mathbb E \int_{0}^T \|X^n\|_{V}^{p-2} \|X^n\|_{V_0}^{\alpha_0} \,\mathrm{d}t \le e^{C T} (1+\mathbb E \|X_0\|_V^p),
\quad n \in \mathbb N_+. 
\end{align}
In particular,  
\begin{align} \label{est-Xn+} 
& \| X^n\| _K + \| X^n\| _{K_0} + \| D_1(X^n)\| _{K^*} + \| D_0(X^n)\| _{K_0^*} + \| G(X^n)\| _J \nonumber \\
& \le e^{C T} (1+\mathbb E \|X_0\|_V^p),
\quad n \in \mathbb N_+. 
\end{align}
\end{lemma}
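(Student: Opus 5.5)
The plan is to apply the finite-dimensional Itô formula to $\|X^n(t)\|_V^p$ on $(H_n,\|\cdot\|_V)$, use (H3) for the drift, and close with Gronwall. Write $\phi(x)=\|x\|_V^p$ on $H_n$. Then $D\phi(x)=p\|x\|_V^{p-2}\langle x,\cdot\rangle_V$, and $D^2\phi(x)$ is bounded by $p(p-1)\|x\|_V^{p-2}\|\cdot\|_V^2$. Itô's formula gives
\begin{align*}
\mathrm d\|X^n\|_V^p \le \tfrac p2\|X^n\|_V^{p-2}\Big(2\langle P_nD(X^n),X^n\rangle_V+(p-1)\|P_nG(X^n)\hat P_n\|_{\mathcal L_2^1}^2\Big)\mathrm dt+\mathrm dM^n,
\end{align*}
with $M^n(t)=p\int_0^t\|X^n\|_V^{p-2}\langle X^n,P_nG(X^n)\,\mathrm dW^n\rangle_V$.

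The key structural point comes from (H0): the $e_k$ are $V$-orthogonal, so $P_n$ restricted to $V$ is the $V$-orthogonal projection onto $H_n$. Two consequences follow.
\begin{itemize}
\item $\langle P_nD(X^n),X^n\rangle_V=\langle D(X^n),X^n\rangle_V$, using that $D$ maps $H_n$ into $V$ by (H3).
\item $\|P_nG(X^n)\hat P_n\|_{\mathcal L_2^1}\le\|G(X^n)\|_{\mathcal L_2^1}$.
\end{itemize}
Hence (H3) applies and bounds the drift by
\[
\tfrac p2\|X^n\|_V^{p-2}\big(c_1+c_2\|X^n\|_V^2-c_3\|X^n\|_{V_0}^{\alpha_0}\big).
\]
Young's inequality gives $\|x\|_V^{p-2}\le 1+\|x\|_V^p$, so we get
\[
\mathrm d\|X^n\|_V^p+\tfrac{pc_3}2\|X^n\|_V^{p-2}\|X^n\|_{V_0}^{\alpha_0}\mathrm dt\le C(1+\|X^n\|_V^p)\mathrm dt+\mathrm dM^n.
\]

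To remove the martingale we localize with $\tau_R=\inf\{t:\|X^n(t)\|_V>R\}$. The stopped integrand is bounded, so $M^n(\cdot\wedge\tau_R)$ is a true martingale; in finite dimensions $G$ restricted to $H_n$ is bounded on bounded sets by (H4). Taking expectations, Gronwall's inequality applied to $t\mapsto\mathbb E\|X^n(t\wedge\tau_R)\|_V^p$ gives a bound $e^{CT}(1+\mathbb E\|X_0\|_V^p)$, using $\|P_nX_0\|_V\le\|X_0\|_V$. Letting $R\to\infty$ via Fatou's lemma and monotone convergence, which is valid because $\tau_R\to T$ by continuity, yields \eqref{est-Xn} uniformly in $n$. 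The constants depend only on $c_1,c_2,c_3,p$, which gives the uniformity.

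For \eqref{est-Xn+} we take $p=2$ in the argument above, or use Hölder from the case $p\ge2$ together with $1+x^{2}\lesssim 1+x^p$.
\begin{itemize}
\item $\|X^n\|_K^2\le T\sup_t\mathbb E\|X^n_t\|_V^2$ and $\|X^n\|_{K_0}^{\alpha_0}=\mathbb E\int_0^T\|X^n\|_{V_0}^{\alpha_0}$ are bounded by the $p=2$ version of the integral term.
\item By (H4), $\|D_1(X^n)\|_{K^*}^2\le C\,\mathbb E\int_0^T(1+\|X^n\|_V^2)$.
\item $\|D_0(X^n)\|_{K_0^*}^{\alpha_0^*}\le C\,\mathbb E\int_0^T(1+\|X^n\|_{V_0}^{\alpha_0})$, since $(\alpha_0-1)\alpha_0^*=\alpha_0$.
\item $\|G(X^n)\|_J^2\le C\,\mathbb E\int_0^T(1+\|X^n\|_V^2+\|X^n\|_{V_0}^{\alpha_0})$.
\end{itemize}
Each of these is controlled by the right side of \eqref{est-Xn}, up to taking roots, which are absorbed into the form $e^{CT}(1+\mathbb E\|X_0\|_V^p)$, possibly enlarging $C$.

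The main obstacle is justifying the exchange $\langle P_nD(X^n),X^n\rangle_V=\langle D(X^n),X^n\rangle_V$ and the Hilbert–Schmidt contraction in $\mathcal L_2^1$. Both rest entirely on the $V$-orthogonality in (H0) and on $D(H_n)\subset V$ in (H3). Since the $e_k$ are orthogonal but not normalized in $V$, I would verify that $P_nv=\sum\langle v,e_i\rangle e_i$ coincides with $\sum\langle v,e_i\rangle_V e_i/\|e_i\|_V^2$ for $v\in V$. This is where I expect a short computation: it uses that $\langle v,e_i\rangle_V=\lambda_i\langle v,e_i\rangle$, which holds when $\{e_i\}$ diagonalizes the $V$-inner product relative to $H$. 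That holds once we know $\{e_i\}$ is an orthonormal basis of $H$ and is also $V$-orthogonal.
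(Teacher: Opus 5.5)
Your proposal follows the paper's proof essentially step for step: the finite-dimensional It\^o formula for $\|X^n\|_V^p$, the identity $\langle P_nD(u),v\rangle_V=\langle D(u),v\rangle_V$ on $H_n$ from (H0) and (H3), coercivity plus Young, localization, Fatou and Gronwall, and then (H4) with $p=2$ for \eqref{est-Xn+}; your ordering (Gronwall on the stopped process before letting $R\to\infty$) is slightly cleaner than the paper's. One caveat on the point you flag: $\langle v,e_i\rangle_V=\|e_i\|_V^2\langle v,e_i\rangle$ for all $v\in V$ (equivalently, $P_n|_V$ being the $V$-orthogonal projection, which you need for the $\mathcal L_2^1$-contraction and for $\|P_nX_0\|_V\le\|X_0\|_V$) does not follow from $H$-orthonormality plus $V$-orthogonality alone; it requires $\mathrm{span}\{e_i\}$ to be dense in $V$, which holds for eigenvectors of $A$, and the paper's interchange of the $H$-convergent series with $\langle\cdot,v\rangle_V$ in \eqref{PnV} tacitly relies on the same strengthening of (H0).
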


\begin{proof}
By \textup{(H0)}, we have
$\langle e_k,v \rangle_{V_i} = 0$, for $k \ge  n+1$ and $v\in H_{n}$.
By \textup{(H3)}, we know that $D(u)\in V \subset H$ for $u\in H_{n}$ and thus
\begin{align*}
\sum_{i = 1}^{\infty} {}_{V^{*}}\langle D(u),e_{i}\rangle_{V} e_{i} 
= \sum_{i = 1}^{\infty} \langle D(u),e_{i}\rangle e_{i} = D(u).
\end{align*}
Then, for any $u,v\in H_{n}$, we have
\begin{align}\label{PnV} 
\langle P_{n}D(u), v \rangle_{V}
&= \sum_{i = 1}^{n}  \langle D(u),e_{i}\rangle_{V^{*}, V} \langle e_{i}, v \rangle_V
= \sum_{i = 1}^\infty  \langle D(u),e_{i}\rangle_{V^{*}, V} \langle e_{i}, v \rangle_V \nonumber \\
& =  \langle \sum_{i = 1}^{\infty} \langle D(u),e_{i}\rangle_{V^{*}, V} e_{i}, v  \rangle_V
= \langle D(u), v \rangle_{V}.
\end{align} 

Let $t \in [0, T]$, $p \ge 2$, and $0 \le n \le N$.
Using finite-D It\^{o} formula, we have  
\begin{align} \label{ito-n}
\|X^n(t)\|_V^{p}  
& = \|X^n_0\|_V^{p}  + M^n(t) + \frac{p(p-2)}2 \int_{0}^{t} \|X^n\|_V^{p-4}
   \|[P_n G(X^n) \hat{P}_{n}]^* X^n\|_{U_0}^2 \,\mathrm{d}t  \nonumber \\
&\quad + \frac p2 \int_{0}^{t}\|X^n\|_V^{p-2}
[2 \langle P_n D(X^n), X^n \rangle_V 
  + \|P_n G(X^n)\hat{P}_{n}\|_{\mathcal L_2^1}^2] \,\mathrm{d}t, 
\end{align} 
where  
$M^n(\cdot):= p \int_0^\cdot \|X^n\|_V^{p-2} \langle  X^n, P_n G(X^n) \,\mathrm{d}W^n(s) \rangle_V$.
For each $R \in \mathbb N_+$, define 
$$\tau_R^n:=\inf\{t\ge0:\|X^n(t)\|_V \ge R\}\wedge T.$$
Then $\{\tau_R^n\}$ is a sequence of $\mathbb F$-stopping times with $\tau_R^n\uparrow T$ as $R \to \infty$ s.t. for each $R \in \mathbb N_+$, $\|X^n(t\wedge\tau_R^n)(\omega)\|_V$ is bounded uniformly in $(t,\omega)\in[0, T]\times\Omega$ and that $\{M^n(t\wedge\tau_R^n):~ t \in [0, T]\}$ is a martingale.
Consequently, $M^n$ is a local martingale with localizing sequence $\{\tau_R^n\}$ s.t. $\mathbb E M^n(t)=0$.

Applying \eqref{PnV}, \eqref{coe}, and Young inequality, we have 
\begin{align*}
& \mathbb E \|X^n(t\wedge\tau_R^n)\|_V^{p} - \mathbb E \|X_0^n\|_V^{p} \nonumber \\
&\le \frac p2 \mathbb E \int_{0}^{t} \mathbf{1}_{[0, \tau_R^n]}  \|X^n\|_{V}^{p-2}
[2 \langle D(X^n), X^n\rangle_{V}
  + (p-1) \|G(X^n)\|_{\mathcal L_2^1}^2] \,\mathrm{d}t \\
& \le \frac p2 \mathbb E  \int_{0}^{t} \mathbf{1}_{[0, \tau_R^n]}  \|X^n\|_{V}^{p-2} (c_1 + c_2\|X^n\|_V^2 - c_3 \|X^n\|_{V_0}^{\alpha_0}) \,\mathrm{d}t \\
&\le \mathbb E \int_0^t \mathbf{1}_{[0,\tau_R^n]} (c_1' + c_2' \|X^n\|_V^p -  c_3' \|X^n\|_V^{p-2} \|X^n\|_{V_0}^{\alpha_0}) \,\mathrm{d}s,
\end{align*}
for some constants $c_1'$, $c_2'$, and $c_3'=p c_3$.    
It follows that 
\begin{align*}
&\mathbb E \|X^n(t \wedge \tau_R^n)\|^p + c_3' \mathbb E \int_0^t \mathbf{1}_{[0,\tau_R^n]} \|X^n\|_V^{p-2} \|X^n\|_{V_0}^{\alpha_0} \,\mathrm{d}s \\
&\le \mathbb E \|X_0^n\|^p + c_1' t + c_2' \mathbb E \int_0^t \mathbf{1}_{[0,\tau_R^n]} \|X^n\|_V^p \,\mathrm{d}s.
\end{align*}
By the definition of $\{\tau_R^n\}$, all terms are finite. Now taking $R \to \infty$ and applying Fatou lemma, Young inequality, and the fact that $\mathbb E \|X^n_0\|_V \le \mathbb E \|X_0\|_V$, we get
\begin{align} \label{Xn-lp}
& \mathbb E \|X^n(t)\|_V^p + c_3' \mathbb E \int_0^t \|X^n\|_V^{p-2} \|X^n\|_{V_0}^{\alpha_0} \,\mathrm{d}s
\le \mathbb E \|X_0\|_V^p + c_1' t + c_2' \int_0^t \mathbb E \|X^n\|_V^p \,\mathrm{d}s.
\end{align}  
Then we derive \eqref{est-Xn} and the first two summands in \eqref{est-Xn+} with $p=2$. 

For the remaining summands in \eqref{est-Xn+}, the assertion then follows from \eqref{D-grow}-\eqref{G-grow} with $\alpha_0 \ge 2$, and Young inequality:
\begin{align*}
\|D_1(X^n)\|_{K^*}^2 
% & = E\int_0^T \|D(X^n)\|_{V^*}^2 \,\mathrm{d}t
& \le C \mathbb E \int_0^T (1 + \|X^n\|_V)^2 \,\mathrm{d}t
\le C (T + \|X^n\|_K^2 ),  \\
\|D_0(X^n)\|_{K_0^*}^{\alpha_0^*} 
% & = E\int_0^T \|D(X^n)\|_{V^*}^{\alpha_0^*}\,\mathrm{d}t
& \le C\mathbb E \int_0^T (1 + \|X^n\|_{V_0}^{\alpha_0-1})^{\alpha_0^*} \,\mathrm{d}t 
\le C (T + \|X^n\|_{K_0}^{\alpha_0}), \nonumber  \\
\|G(X^n)\|_J^2  
& \le C E\int_0^T (1 + \|X^n\|_V^2 + \|X^n\|_{V_0}^{\alpha_0})\,\mathrm{d}t 
\le C (T + \|X^n\|_K^2+\|X^n\|_{K_0}^{\alpha_0}).
\end{align*} 
All of them are bounded, since the first two terms in \eqref{est-Xn+} are bounded.
\end{proof}

As a consequence of the uniform estimate \eqref{est-Xn}, we obtain the following existence result for solutions to Eq.~\eqref{see} in \(V\).

\begin{theorem}  \label{tm-well}
Let $p \ge 2$, $X_0 \in L^p(\Omega; V)$, and \textup{(H0)}-\textup{(H4)} hold. Eq. \eqref{see} exists a solution $X \in L_t^\infty L_\omega^p V \cap L_{t, \omega}^{\alpha_0} V_0$ which is a Markov process.
\end{theorem}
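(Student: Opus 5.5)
We follow the classical Galerkin/monotonicity scheme of Krylov--Rozovskii, as in \cite[Theorem 4.2.4]{LR15} and \cite{LR10, Liu13}, with the extra space \(V_0\) taking care of the superlinear parts. The starting point is the uniform bound \eqref{est-Xn+} of Lemma \ref{lm-Xn}. Since \(K\), \(K_0\), \(K^*\), \(K_0^*\) and \(J\) are reflexive, we can extract a subsequence (not relabeled) with
\[
X^n\rightharpoonup \bar X \text{ in } K\cap K_0, \quad D_1(X^n)\rightharpoonup \eta_1 \text{ in } K^*, \quad D_0(X^n)\rightharpoonup \eta_0 \text{ in } K_0^*, \quad P_nG(X^n)\hat P_n\rightharpoonup Z \text{ in } J_0 .
\]
The bound \eqref{est-Xn} also gives weak-star convergence in \(L^\infty_t L^p_\omega V\), and, by lower semicontinuity of the norm, the limit inherits the \(L_t^\infty L_\omega^p V\cap L_{t,\omega}^{\alpha_0}V_0\) bound.

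Next we identify the limit equation. We pass to the limit in the Galerkin equation \eqref{Xn}, tested against \(e_j\) and against \(\phi\in L^\infty([0,T]\times\Omega)\). This uses that the stochastic integral is a bounded linear operator \(J_0\to L^2_\omega H\), hence weakly continuous, and that \(\int_0^\cdot\) is continuous linear from \(K^*\) and \(K_0^*\). We obtain a \(dt\otimes\mathbb P\)-version of
\[
X(t)=X_0+\int_0^t(\eta_1+\eta_0)\,ds+\int_0^t Z\,dW .
\]
Since \(\eta_1+\eta_0\in K^*+K_0^*\subset L^{\alpha_0^*}_{t,\omega}V^*\), the It\^o formula for \(\|X\|^2\) in the triple \(V\hookrightarrow H\hookrightarrow V^*\) gives an \(H\)-continuous version \(X\). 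A generalized version of \cite[Theorem 4.2.5]{LR15} is needed here, with the pairing split as \(\langle\eta_1,X\rangle_{V^*,V}+\langle\eta_0,X\rangle_{V_0^*,V_0}\).

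The identification \(\eta_1+\eta_0=D(X)\) and \(Z=G(X)\) is done by the monotonicity trick. We apply It\^o's formula in \(H\) to \(e^{-c_0t}\|X^n\|^2\) and \(e^{-c_0t}\|X\|^2\), and use lower semicontinuity of \(\mathbb E[e^{-c_0t}\|X(t)\|^2]\) under weak convergence. Together with \textup{(H2)} (taking the case \(p-1\ge1\)), this yields for every \(\phi\in K\cap K_0\)
\[
\mathbb E\int_0^T e^{-c_0t}\big(2\langle \eta_1+\eta_0-D(\phi),X-\phi\rangle-\|Z-G(\phi)\|_{\mathcal L_2^0}^2-c_0\|X-\phi\|^2\big)\,dt\le 0 .
\]
Choosing \(\phi=X\) gives \(Z=G(X)\). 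Choosing \(\phi=X-\lambda v\), dividing by \(\lambda\), letting \(\lambda\downarrow0\) and using hemicontinuity \textup{(H1)}, we get \(\eta_1+\eta_0=D(X)\).

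This step is where the main obstacle lies. Hemicontinuity is stated only for \(u,v,w\in V\), while the test function must lie in \(K\cap K_0\). So we need the growth bounds \eqref{D-grow} to dominate \(\langle D(X-\lambda v),v\rangle\) uniformly for \(\lambda\in(0,1)\), and then use dominated convergence. We also need that \(\mathbb E\|X^n_0\|^2\to\mathbb E\|X_0\|^2\) and that the Galerkin term \(\|P_nG\hat P_n\|\le\|G\|\) passes correctly; both are routine.

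Two further points remain. First, \(V\)-continuity: from \textup{(H3)} we only have the Galerkin-level bound. We would upgrade it using the new It\^o formula for \(\|X\|_V^p\) that the paper announces, applied to the limit, or obtain right-continuity in \(V\) via weak continuity in \(V\) (from \(L^\infty_tV\) bounds and \(H\)-continuity) plus continuity of \(t\mapsto\|X(t)\|_V\). If that is not yet available at this point, we settle for \(X\in L^\infty_tL^p_\omega V\) and defer continuity. Second, the Markov property: pathwise uniqueness follows from \textup{(H2)} via It\^o's formula for \(\|X-Y\|^2\) and Gronwall. Yamada--Watanabe then gives a measurable solution map, and the standard argument of \cite[Proposition 4.3.5]{LR15} gives the Markov property.
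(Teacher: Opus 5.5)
Your proposal follows essentially the same route as the paper: Galerkin bounds from Lemma~\ref{lm-Xn}, weak/weak-$*$ compactness in $K$, $K_0$, $K^*$, $K_0^*$, $J$ and $L_t^\infty L_\omega^pV$, then the monotonicity trick (using the split-pairing It\^o formula, i.e.\ Lemma~\ref{lm-ito} with $p=2$) and the Markov argument, which the paper simply delegates to \cite[Theorem 1.1]{Liu13} and \cite[Proposition 4.3.3]{LR15}. Two small points: the term $\|Z-G(\phi)\|_{\mathcal L_2^0}^2$ in your limit inequality must carry a plus sign (with the minus sign, the choice $\phi=X$ gives nothing and the $G$-term cannot be dropped in the hemicontinuity step); and, exactly as in the paper's proof, $V$-continuity is not obtained under \textup{(H0)}--\textup{(H4)} alone, because Corollary~\ref{cor-ito} needs the extra hypotheses $A^{1/2}X\in K\cap K_0$ and $A^{1/2}D(X)\in K^*+K_0^*$, which only appear in Theorem~\ref{tm-mom}.
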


\begin{proof}
By the reflexivity of \(K\), \(K_0\), \(K^*\), \(K_0^*\), and \(J\), together with the uniform estimate \eqref{est-Xn} and the Banach--Alaoglu and Kakutani theorems, there exists a subsequence \(\{n_k\} \subset\mathbb N\) with \(n_k\to\infty\) s.t.  
\begin{enumerate}
\item $X^{n_k}\rightharpoonup \bar X$ in $K$ and $K_0$ and $X^{n_k} \overset{*}\rightharpoonup \bar X$ in $L_t^\infty L_\omega^p V$;

\item $Y_1^{n_k}:=P_{n_k} D_1(X^{n_k}) \rightharpoonup Y_1$ and $N^{n_k}:=\int_0^\cdot Y_1^{n_k} \,\mathrm{d}s \rightharpoonup N_1:=\int_0^\cdot Y_1 \,\mathrm{d}s$ both in $K^*$, and $Y_2^{n_k}:=P_{n_k} D_0(X^{n_k}) \rightharpoonup Y_2$ and $N_2^{n_k}:=\int_0^\cdot Y_2^{n_k} \,\mathrm{d}s \rightharpoonup N_2:=\int_0^\cdot Y_2 \,\mathrm{d}s$ both in $K_0^*$; and  

\item $Z^{n_k}:=P_{n_k} G(X^{n_k}) \rightharpoonup Z$ in $J$ and hence
$\int_0^\cdot Z^{n_k} \,\mathrm{d}W^{n_k}(s)
%= \int_0^\cdot P_{n_k} G(X^{n_k})\widetilde P_{n_k}\,\mathrm{d}W 
\overset{*}\rightharpoonup
\int_0^\cdot Z \,\mathrm{d}W$
in $L_t^\infty L_\omega^2 H$.
\end{enumerate}

Now, \(\mathrm dt\times\mathbb P\)-a.e., \(\bar X=X_0+N+M=:X\), where \(N:=N_1+N_2\). The remainder of the existence proof and the proof of the Markov property can be adapted from \cite[Theorem 1.1]{Liu13} and \cite[Proposition 4.3.3]{LR15}, resp. 
\end{proof}

\subsection{It\^o formula and Moment Estimates}
\label{sec2.3}

In this section, we derive a new It\^o formula for \(\|X\|_V^p\), for a general exponent \(p\ge2\), to estimate moments of solutions to Eq.~\eqref{see}. This formula is crucial in both the mathematical and numerical analysis of SPDEs, especially in the analysis of strong convergence rates in Section~\ref{sec4}.

In general, one cannot prove the a.s. convergence of the Galerkin approximations \eqref{Xn} in \(\mathcal C([0,T];V)\), nor can one show that the solution of Eq.~\eqref{see}, as their limit, is continuous in \(V\) and satisfies an analogue of the It\^o formula \eqref{ito-n}.  
One obstacle is that the monotone condition \eqref{mon} is assumed on \(H\) rather than on \(V\), and such a \(V\)-version does not hold in many commonly used situations.  
Another deterministic compactness argument, namely the Aubin--Lions argument, is also inapplicable, since it requires the temporal derivative of the solution to lie in a certain Banach space, which is never the case in the stochastic setting.

To overcome these difficulties, we assume the existence of a symmetric positive definite linear operator \(A: V_2:=\operatorname{Dom}(A)\subset H\to H\) (e.g., the negative Dirichlet Laplacian \(-\Delta\)) s.t. \(V=\operatorname{Dom}(A^{1/2})\).
In this case, the norm and inner product on \( V \) can be defined, resp., as
\begin{align} \label{df-V}
\|x\|_V^2 = \|A^{1/2} x\|^2 = \langle A x, x \rangle, \quad 
\langle x, y \rangle_V = \langle A^{1/2} x, A^{1/2} y \rangle, 
\quad x, y \in V. 
\end{align}  

As $X$ satisfies Eq. \eqref{see}, we have   
\begin{align*}
d [A^{1/2} X(t)] = A^{1/2} D(X(t))\,\mathrm{d}t + A^{1/2} G(X(t))\,\mathrm{d}W(t), ~ 0<t \le T. 
\end{align*} 
Since we have proved that \(X\in L_t^\infty L_\omega^p V\) for \(p\ge2\), it is natural to set $U:=A^{1/2} X \in L_t^\infty L_\omega^p H$ and reduce the desired It\^o formula in \(V\) to one in \(H\).

In the following, we first derive an It\^o formula for the process
\begin{align} \label{ito-}
X(t):=X_0+ \int_0^t Y \,\mathrm{d}s + \int_0^t Z \,\mathrm{d}W, \quad t \ge 0,
\end{align} 
where \(Y\) and \(Z\) are general progressively measurable processes, so as to include more concrete examples. The proof for the case \(p=2\) follows the idea in \cite{RRW07}. We then prove the general case \(p\ge2\) by using the chain rule for a 1-D It\^o process and martingale representation theory. Finally, we extend the formula to the space \(V\).

\begin{lemma}  \label{lm-ito}
Let $p \ge 2$ and assume that $X$ satisfies \eqref{ito-} with $X_0\in L^p(\Omega; H)$, $X\in K \cap K_0$, $Y \in K^* + K_0^*$, and $Z \in J_0$. Then $X$ is an $H$-valued continuous $\mathbb F$-adapted process s.t. $E \sup_{t \in [0, T]}\|X(t)\|^p<\infty$ and 
\begin{align} \label{ito}
\frac1p \|X(t)\|^{p} 
&= \frac1p \|X_0\|^{p} + \int_{0}^{t}\|X\|^{p-2}
  \langle X, Z  \,\mathrm{d}W \rangle
+ \frac{p-2}2 \int_{0}^{t}\|X\|^{p-4}
   \|Z^* X\|_{U_0}^2 \,\mathrm{d}t  \nonumber \\
&\quad + \frac12 \int_{0}^{t}\|X\|^{p-2}
[2 \langle Y, X \rangle_{V^*, V} + \|Z\|_{\mathcal L_2^0}^2] \,\mathrm{d}t, \quad t \in [0, T].
\end{align}  
\end{lemma}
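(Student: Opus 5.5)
We first treat the case \(p=2\) and then pass to general \(p\ge 2\) through a one-dimensional chain rule.

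\textbf{Step 1: the case \(p=2\).} We follow the Krylov--Rozovskii approach as adapted in \cite{RRW07} to the triple \(V\cap V_0\hookrightarrow H\hookrightarrow V^*+V_0^*\). The integrability \(X\in K\cap K_0\), \(Y\in K^*+K_0^*\) makes the pairing \(\langle Y,X\rangle_{V^*,V}\) (interpreted as \(\langle Y_1,X\rangle_{V^*,V}+\langle Y_2,X\rangle_{V_0^*,V_0}\) for \(Y=Y_1+Y_2\)) integrable in \(L^1_{t,\omega}\), by H\"older with exponents \((2,2)\) and \((\alpha_0^*,\alpha_0)\).

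The construction proceeds in three parts.
\begin{enumerate}
\item Choose partitions \(\{t_i^\ell\}\) of \([0,T]\) so that the piecewise-constant interpolants \(\bar X^\ell\), \(\underline X^\ell\) converge to \(X\) in \(K\cap K_0\). This is possible by a standard averaging-over-shifts argument applied to the \(\mathrm dt\times\mathbb P\)-versions.
\item On each partition, use the algebraic identity
\[
\|X(t_{i+1})\|^2-\|X(t_i)\|^2=2\langle X(t_{i+1}),X(t_{i+1})-X(t_i)\rangle-\|X(t_{i+1})-X(t_i)\|^2 ,
\]
and insert \eqref{ito-} for the increment.
\item Deduce that \(\sup_t\|X(t)\|\) is finite a.s. and that \(\mathbb E\sup_t\|X(t)\|^2<\infty\), using BDG on the stochastic integral. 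Then pass to the limit to obtain \eqref{ito} with \(p=2\) at the partition points.
\end{enumerate}

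Continuity in \(H\) is obtained as follows. Weak continuity in \(H\) follows from continuity in \(V^*+V_0^*\) together with boundedness in \(H\). Continuity of the norm \(t\mapsto\|X(t)\|^2\) comes from the right-hand side of the formula, which is continuous. Weak continuity plus norm continuity then gives strong continuity.

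\textbf{Step 2: general \(p\).} Set \(\rho(t):=\|X(t)\|^2\). By Step 1, \(\rho\) is a real continuous semimartingale:
\[
\mathrm d\rho=\bigl(2\langle Y,X\rangle_{V^*,V}+\|Z\|_{\mathcal L_2^0}^2\bigr)\mathrm dt+2\langle X,Z\,\mathrm dW\rangle .
\]
Its quadratic variation is \(\mathrm d\langle\rho\rangle=4\|Z^*X\|_{U_0}^2\mathrm dt\); this is the place where the martingale representation of the one-dimensional martingale part is used.

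Apply the classical one-dimensional Itô formula to \(\phi_\varepsilon(\rho)=(\rho+\varepsilon)^{p/2}/p\), which is \(C^2\) on \([0,\infty)\). This gives
\[
\phi_\varepsilon'(\rho)=\tfrac12(\rho+\varepsilon)^{p/2-1},\qquad \phi_\varepsilon''(\rho)\,\tfrac12\,\mathrm d\langle\rho\rangle=\tfrac{p-2}{2}(\rho+\varepsilon)^{p/2-2}\|Z^*X\|_{U_0}^2\,\mathrm dt .
\]
Let \(\varepsilon\downarrow0\). The limits are justified as follows:
\begin{itemize}
\item the bound \(\|Z^*X\|_{U_0}^2\le\|Z\|_{\mathcal L_2^0}^2\rho\) and dominated convergence handle the drift terms, with the sup-bound on \(\rho\) on \([0,T]\) available pathwise from continuity;
\item the stochastic integrals converge in probability.
\end{itemize}
Together these yield \eqref{ito}.

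\textbf{Step 3: \(p\)-th moment.} To get \(\mathbb E\sup_t\|X(t)\|^p<\infty\), localize with the stopping times \(\tau_R:=\inf\{t:\|X(t)\|\ge R\}\wedge T\). Combine the formula with BDG and Young's inequality, and use \(X_0\in L^p(\Omega;H)\). The obstacle here is that no coercivity is assumed on \(Y\), so a Gronwall closure is unavailable. I would therefore first try to bound the drift term only pathwise, and if needed state the moment bound under the integrability implicit in the applications, where (H3) supplies coercivity.

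The main obstacle is Step 1, namely constructing good partitions simultaneously for the \(K\) and \(K_0\) components, where the \(\alpha_0\)-integrability must be respected. Once the \(p=2\) formula and continuity are in hand, the extension to general \(p\) is routine.
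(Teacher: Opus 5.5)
Your Steps 1 and 2 are sound, but Step 3 cannot be completed. The problem lies in the statement, not in your technique: for $p>2$, the assertion $\mathbb E\sup_{t\in[0,T]}\|X(t)\|^p<\infty$ is false under the lemma's hypotheses. Here is a counterexample.
Take $X_0=0$, $Y\equiv0$, and $e\in V$ with $\|e\|=1$. Let $\xi$ be a real $\mathcal F_{T/2}$-measurable variable, e.g.\ a function of $\beta_1(T/2)$ with $\mathbb P(|\xi|>x)=x^{-p}$ for $x\ge1$. Then $\mathbb E|\xi|^r<\infty$ for $r<p$ but $\mathbb E|\xi|^p=\infty$. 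Set $Z(t)u:=\xi\,\mathbf 1_{(T/2,T]}(t)\,\langle u,\hat g_1\rangle_{U_0}\,e$, so that $X(t)=\xi\,(\beta_1(t\vee T/2)-\beta_1(T/2))\,e$. Because $\xi$ is independent of the later increments, $Z\in J_0$ and $X\in K\cap K_0$ whenever $\alpha_0<p$. Yet $\mathbb E\|X(T)\|^p=\mathbb E|\xi|^p\,\mathbb E|\beta_1(T)-\beta_1(T/2)|^p=\infty$.
A drift-only variant covers every $p>2$. Take $Z\equiv0$ and $X(t)=\xi\,\psi((t-T/2)|\xi|^{a})\,e$, with a fixed bump $\psi$ and $a\in(\max\{0,\alpha_0-p\},\,p(\alpha_0-1)-\alpha_0)$, so that $Y=\dot X\in K_0^*$.

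So the obstruction is not only the missing coercivity of $Y$. Even with $Y\equiv0$, the hypotheses $Z\in J_0$ and $X\in K\cap K_0$ carry only second-moment information. No localization/BDG/Young argument can produce $p$-th moments from them, and bounding the drift pathwise would not help either.

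The paper's proof has the same hole. Its $p=2$ step gives only $\mathbb E\sup_t\|X(t)\|^2<\infty$, and its $p>2$ step only derives the identity. What is actually provable is:
\begin{enumerate}
\item $\mathbb E\sup_t\|X(t)\|^2<\infty$;
\item \eqref{ito} holds a.s.\ for all $t$ and every $p\ge2$, with a local-martingale stochastic integral.
\end{enumerate}
This is all your Step 2 needs, since $\sup_t\|X(t)\|<\infty$ pathwise by continuity. Bounds on $p$-th moments must then come, as in your fallback, from a condition coupling $Y$ and $Z$ such as \textup{(H3)}, via stopping times and Fatou, as in Theorem \ref{tm-mom}.

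Step 1 is the paper's Krylov--Rozovskii argument, with one gloss. The identity is first known only on the dense set $I$ of partition points. Weak continuity gives only $\|X(t)\|^2\le$ (right-hand side) at other $t$. So continuity of the right-hand side does not by itself give the identity everywhere or norm continuity; the extra argument of \cite[pp.~99--101]{LR15} is needed, and the paper also only cites it.

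Step 2 is a genuinely different and cleaner route than the paper's. The paper writes $\int_0^\cdot\langle X,Z\,\mathrm dW\rangle$ as $\int_0^\cdot z\,\mathrm dB$ via a martingale representation theorem and applies the chain rule to $R^{p/2}$ directly. You instead apply the one-dimensional It\^o formula for continuous semimartingales to $(\rho+\varepsilon)^{p/2}/p$ and let $\varepsilon\downarrow0$. This has two advantages:
\begin{enumerate}
\item The bracket $4\int\|Z^*X\|_{U_0}^2\,\mathrm dt$ can be read off directly from $\sum_m\langle X,Z\hat g_m\rangle\,\mathrm d\beta_m$. No representation theorem is needed, so drop that remark.
\item The regularization, together with the domination $\|Z^*X\|_{U_0}^2\le\|Z\|_{\mathcal L_2^0}^2\rho$, correctly handles $2<p<4$, where $r\mapsto r^{p/2}$ is not $C^2$ at $0$.
\end{enumerate}
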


\begin{proof} 
Set $Y=Y_1+Y_2$ with $Y_1 \in K^*$ and $Y_2 \in K_0^*$. 

(1) {\bf The case $p=2$.} 
Set $M=\int_0^\cdot Z \,\mathrm{d}W$.
For all $0 \le s < t \le T$ s.t. $X(t), X(s)\in V$, from $X(t)=X_s+ \int_s^t Y \,\mathrm{d}s + M(t)-M(s)$ it is not difficulty to show that 
\begin{align} \label{X2} 
&\|M(t)-M(s)\|^2 - \|X(t)-X(s)-M(t)+M(s)\|^2 + 2\langle  X(s), M(t)-M(s)\rangle\\
% & = - \|X(t)-X(s)\|^2 + 2 \langle X(t)-X(s), M(t)-M(s) \rangle + 2\langle  X(s), M(t)-M(s)\rangle  \nonumber \\
% & = - \|X(t)-X(s)\|^2 + 2 \langle X(t), M(t)-M(s) \rangle  \nonumber\\
% & =- \|X(t)\|^2 - \|X(s)\|^2 + 2 \langle X(t), X(s) \rangle + 2 \langle X(t), M(t)-M(s) \rangle  \nonumber\\
% & =- \|X(t)\|^2 - \|X(s)\|^2 + 2 \langle X(t), X(s)+ M(t)-M(s) \rangle  \nonumber\\
% & =- \|X(t)\|^2 - \|X(s)\|^2 + 2 \langle X(t), X(t)-\int_s^t Y \,\mathrm{d}s \rangle_{V^*, V} \nonumber \\
& = \|X(t)\|^2 - \|X(s)\|^2 - 2\int_s^t \langle Y, X(t) \rangle_{V^*, V} \,\mathrm{d}r.   \nonumber 
\end{align}

Let $\{I_l: ~ l\in\mathbb N\}$ be a sequence of partitions (as in \cite[Lemma 4.2.6 and Remark 4.2.7]{LR15}) s.t. $I_l\subset I_{l+1}$ and $\delta(I_l):=\max_i(t_i^l-t_{i-1}^l)\to0$ as $l\to\infty$ s.t. $X(t_i^l)\in V \cap V_0$ $\mathbb P$-a.e. for all $l\in\mathbb N,1\le i\le k_l-1$, and 
$\bar X^l:=\sum_{i=2}^{k_l}\mathbf{1}_{[t_{i-1}^l,t_i^l)}X(t_{i-1}^l), ~ \widetilde X^l:=\sum_{i=1}^{k_l-1}\mathbf{1}_{[t_{i-1}^l,t_i^l)}X(t_i^l) \in K \cap K_0$ s.t. 
\begin{align} \label{X-app}
\lim_{l\to\infty} \|X-\bar X^l\|_K
+ \lim_{l\to\infty} \|X-\bar X^l\|_{K_0}
= \lim_{l\to\infty} \|X-\widetilde X^l\|_K 
+ \lim_{l\to\infty} \|X-\widetilde X^l\|_{K_0} =0.
\end{align}
By the assumption that $X\in K \cap K_0$, we may choose $I_l$ s.t. 
$\mathbb E \|X(t)\|^2<\infty \text{ for all }t \in \cup_{l\in\mathbb N}I_l$, thus $E \sup_{1\le j\le K-1}\|X(t_j^l)\|^2 < \infty$.
Then by \eqref{X2}, for any $t=t_i^l\in I_l\setminus\{0, T\}$,
\begin{align} \label{ito-dis} 
& \|X(t)\|^2 - \|X_0\|^2 
= \sum_{j=0}^{i-1} (\|X(t_{j+1}^l)\|^2 - \|X(t_j^l)\|^2) \\
&= 2\int_0^t \langle  Y, \widetilde X^l\rangle_{V^*, V} \,\mathrm{d}s + 2\int_0^t \langle  \bar X^l, Z\,\mathrm{d}W\rangle + 2 \int_0^{t_1^l} \langle X_0, Z\,\mathrm{d}W\rangle \nonumber \\
&\quad + \sum_{j=0}^{i-1} [\|M(t_{j+1}^l)-M(t_j^l)\|^2 - \|X(t_{j+1}^l)-X(t_j^l)-M(t_{j+1}^l)+M(t_j^l)\|^2]. \nonumber 
\end{align}
We note that since $\bar X^l$ is $\mathbb F$-adapted and pathwise bounded, the stochastic integral involving $\bar X^l$ above is well-defined. 
Moreover, as $\lim_{l\to\infty} \|X-\widetilde X^l\|_K =0$, 
\begin{align*}
\mathbb E \int_0^T |\langle  Y, \widetilde X^l\rangle_{V^*, V}| \,\mathrm{d}s 
& \le \mathbb E \int_0^T |\langle  Y_1, \widetilde X^l\rangle_{V^*, V}| \,\mathrm{d}s 
+ \mathbb E \int_0^T |\langle  Y_2, \widetilde X^l\rangle_{V^*, V}| \,\mathrm{d}s \\
& \le \|\widetilde X^l\|_K \|Y_1\|_{K^*} + \|\widetilde X^l\|_{K_0} \|Y_2\|_{K_0^*} 
 \le C (\|Y_1\|_{K^*} + \|Y_2\|_{K_0^*}),
\end{align*}
where $C$ is a constant depending on $\|X\|_K$ and $\|X\|_{K_0}$, but independent of $l$.
Moreover, by Burkholder--Davis--Gundy (BDG) and Young inequalities, we have 
\begin{align*}
E \sup_{t\in[0, T]}  \Big|\int_0^t \langle  \bar X^l, Z \,\mathrm{d}W\rangle \Big| 
&\le 3E \Big(\int_0^T \|Z^*\bar X^l\|_U^2 \,\mathrm{d}s \Big)^{1/2}
\le 3E \Big( \int_0^T \|\bar X^l\|^2 \|Z\|_{\mathcal L_2^0}^2 \,\mathrm{d}s \Big)^{1/2} \nonumber \\ 
&\le \frac{1}{4}E \sup_{1\le j\le K-1}\|X(t_j^l)\|^2 
+ 9 \|Z\|_J^2.  
\end{align*}    
For the first summation on the RHS of \eqref{ito-dis}, we have 
\begin{align*}
E \sum_{j=0}^{i-1}\|M(t_{j+1}^i)-M(t_j^i)\|^2
% &= \sum_{j=0}^{i-1}E\|\int_{t_j^i}^{t_{j+1}^i}Z(s)\,\mathrm{d}W\|^2\\
&= \mathbb E \int_0^{t_i^i}\|Z(s)\|_{\mathcal L_2^0}^2 \,\mathrm{d}s
= E \langle  M\rangle_{t_i^i} \le \|Z\|_J^2.  
\end{align*} 

Combining the above three estimates with \eqref{ito-dis} and Young inequality, we obtain
\begin{align*}
E \sup_{t\in I_1\setminus\{T\}}\|X(t)\|^2 
\le C_1:=C (\mathbb E \|X_0\|^2 + \|Y_1\|_{K^*} + \|Y_2\|_{K_0^*} + \|Z\|_J^2)<\infty.
\end{align*}  
Therefore, letting $l\uparrow\infty$ and setting $I:=\cup_{l\ge1}I_l\setminus\{T\}$, we obtain
$E \sup_{t\in I}\|X(t)\|^2 \le C_1$, as $I_l\subset I_{l+1}$ for all $l\in\mathbb N$. Since $t\mapsto\|X(t)\|$ is lower semicontinuous $\mathbb P$-a.s. and $I$ is dense in $[0, T]$, we have $\sup_{t\in[0, T]}\|X(t)\|^2 = \sup_{t\in I}\|X(t)\|^2$, 
and thus $E \sup_{t\in[0, T]}\|X(t)\|^2 <\infty$.
Therefore, $X$ is $H$-valued $\mathbb P$-a.s. and by its continuity in $V^*$ it is weakly continuous in $H$ $\mathbb P$-a.s.
Moreover, $X$ is progressively measurable as an $H$-valued process, since $\mathcal{B}(H)$ is generated by $H^*$. 
By the proof of the It\^o formula in \cite[Theorem 4.2.5]{LR15}, we have  
\begin{align} \label{x-xl}
\lim_{l\to\infty}\sup_{t\in[0, T]}\left|\int_0^t \langle  X-\bar X^l, Z\,\mathrm{d}W\rangle\right| = 0 \text{ in } \mathbb P.  
\end{align}

Now, fix $0 \neq t\in I$. 
For each sufficiently large $l\in\mathbb N$, there exists a unique $0<i<k_l$ s.t. $t=t_i^l$. We have $X(t_j^l) \in V \cap V_0$ a.s. for all $j$. 
By \eqref{X-app},
\begin{align*}
& E \Big|\int_0^t \langle Y, \widetilde X^l-\bar X \rangle_{V^*, V} \,\mathrm{d}s \Big| \\
& \le E \int_0^t |\langle Y_1, \widetilde X^l-\bar X\rangle_{V^*, V}| \,\mathrm{d}s 
+ E \int_0^t |\langle Y_1, \widetilde X^l-\bar X\rangle_{V_0^*, V_0}|  \,\mathrm{d}s \\
& \le \|Y_1\|_{K^*} \|\widetilde X^l-\bar X\|_K
+ \|Y_2\|_{K_0^*} \|\widetilde X^l-\bar X\|_{K_0},
\end{align*}
which shows that $\int_0^t \langle  Y, \widetilde X^l-\bar X^l \rangle_{V^*, V} \,\mathrm{d}s\to0$ in $L^1(\Omega; \mathbb R)$, and thus in $P$, as $l \to \infty$.
This convergence, in combination with \eqref{x-xl}, yields that the sum of the first three terms in the right-hand side of \eqref{ito-dis} converges to
$2\int_0^t \langle  Y, \bar X \rangle_{V^*, V} \,\mathrm{d}s + 2\int_0^t \langle  X, Z\,\mathrm{d}W\rangle$  in $\mathbb P$, as $l \to \infty$. 
Hence, we have
\begin{align*}
\|X(t)\|^2 - \|X_0\|^2 = 2\int_0^t \langle  Y, \bar X\rangle_{V^*, V} \,\mathrm{d}s + 2\int_0^t \langle  X, Z\,\mathrm{d}W\rangle + \langle  M\rangle_t - \epsilon_0,
\end{align*}
where
$\epsilon_0:=\lim_{l\to\infty}\sum_{j=0}^{i-1}\|X(t_{j+1}^l)-X(t_j^l)-M(t_{j+1}^l)+M(t_j^l)\|^2$ (in $\mathbb P$) exists.
The arguments in \cite[Page 99]{LR15} show that $\epsilon_0=0$, and hence \eqref{ito} holds for every $0 \neq t \in I$. The extension of \eqref{ito} with $p=2$ to the case $t \in [0, T]$, together with the proof of the strong continuity of $t \mapsto \|X(t)\|^2$, is analogous to \cite[Pages 99--101]{LR15}; therefore, we omit the details.

(2) {\bf The case $p>2$.}
We now extend \eqref{ito} from $p=2$ to $p>2$.
Indeed, \eqref{ito} with $p=2$ yields that $\|X\|^2$ is an $\mathbb R$-valued It\^o process s.t.
\begin{align*}
\frac12 d \|X(t)\|^2 = \Big[ \langle Y, X\rangle_{V^*, V}
  + \frac12 \|Z\|_{\mathcal L_2^0}^2 \Big] \,\mathrm{d}t 
  + \langle X, Z  \,\mathrm{d}W \rangle, \quad t \in [0, T].
\end{align*}  

Since \(X\in L_\omega^2\mathcal C_tH\) and \(Z\in J\), it follows that $M(\cdot)=\int_0^\cdot \langle X, Z  \,\mathrm{d}W \rangle$ is a 1-D square-integrable martingale.
By the martingale representation theorem, there exists a 1-D predictable process $z$ satisfying $\|z\|_{L_{t, \omega}^2}<\infty$ and a 1-D Wiener process $B$ on $(\Omega, \mathcal F, \mathbb F, \mathbb P)$ s.t. 
$M(\cdot)=\int_0^\cdot z  \,\mathrm{d}B$ satisfying $z^2=\|Z^* X\|_{U_0}^2$.
Therefore, $R:=\|X\|^2$ is an It\^o process satisfying 
$d R(t) = y(t) \,\mathrm{d}t + z(t) \,\mathrm{d}B(t)$, where $y:= \langle Y, X\rangle_{V^*, V}
  + \frac12 \|Z\|_{\mathcal L_2^0}^2$ is a 1-D predictable process s.t. 
$\mathbb E \int_0^T |y(t)| \,\mathrm{d}t<\infty$.
  Using the chain rule, we obtain 
\begin{align*}
d |R(t)|^{\frac p2} 
= p |R(t)|^{\frac p2-2} [R(t) y(t) + z(t)^2] \,\mathrm{d}t + p |R(t)|^{\frac p2-2} R(t) z(t)  \,\mathrm{d}B(t).
\end{align*}
Then we conclude \eqref{ito} by substituting the representations of $R$, $y$, and $z$. 
\end{proof}

\begin{corollary} \label{cor-ito}
Let $p \ge 2$ and assume that $X$ satisfies \eqref{ito-} with $X_0 \in L^p(\Omega; V)$, $A^{1/2} X \in K \cap K_0$, $A^{1/2} Y \in K^* + K_0^*$, and $Z \in J$. Then $X$ is a $V$-valued continuous $\mathbb F$-adapted process s.t. $E \sup_{t \in [0, T]}\|X(t)\|_V^p<\infty$ and for all $t \in [0, T]$,  
\begin{align} \label{ito-V}
& \frac1p \|X(t)\|_V^{p} 
= \frac1p \|X_0\|_V^{p} 
+ \frac{p-2}2 \int_{0}^{t}\|X\|_V^{p-4}
   \|[A^{1/2} Z]^* [A^{1/2} X]\|_V^2 \,\mathrm{d}t  \\
&\quad + \frac12 \int_{0}^{t}\|X\|_V^{p-2}
[2 \langle A^{1/2} Y, A^{1/2} X \rangle_{V^*, V}
  + \|Z\|_{\mathcal L_2^1}^2] \,\mathrm{d}t + \int_{0}^{t}\|X\|_V^{p-2}
  \langle X, Z \,\mathrm{d}W \rangle_V. \nonumber 
\end{align}  
\end{corollary}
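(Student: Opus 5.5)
The plan is to apply Lemma \ref{lm-ito} to the transformed process $U:=A^{1/2}X$. The identity \eqref{df-V} says $A^{1/2}$ is an isometry from $V$ onto $H$, so $\|X\|_V=\|U\|$. Since $A^{1/2}$ is a closed linear operator, I will first check that $U$ is an It\^o process of the form \eqref{ito-}:
\begin{align*}
U(t)=A^{1/2}X_0+\int_0^t A^{1/2}Y\,\mathrm{d}s+\int_0^t A^{1/2}Z\,\mathrm{d}W .
\end{align*}
Three facts enter here. First, $Z\in J=L^2_{t,\omega}\mathcal L_2^1$ gives $A^{1/2}Z\in J_0$. Consequently, the stochastic integral commutes with $A^{1/2}$; this is standard for closed operators when the integrand lies in $L^2_{t,\omega}HS(U_0;\operatorname{Dom}A^{1/2})$. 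Second, $A^{1/2}Y\in K^*+K_0^*$ is assumed, so the Bochner integral $\int_0^t Y\,\mathrm{d}s$ can be moved through $A^{1/2}$ in the weak sense. Concretely, I will test against $A^{1/2}\phi$ for $\phi$ in a dense set, e.g. finite combinations of eigenvectors of $A$ when $A$ has compact resolvent, or more generally $\phi\in\operatorname{Dom}(A)$ intersected with the appropriate spaces. Third, $A^{1/2}X_0\in L^p(\Omega;H)$ because $X_0\in L^p(\Omega;V)$. The remaining hypotheses of Lemma \ref{lm-ito} are exactly $U\in K\cap K_0$ and $A^{1/2}Y\in K^*+K_0^*$, which are assumed.

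Next I apply Lemma \ref{lm-ito} to $U$ with drift $\tilde Y=A^{1/2}Y$ and diffusion $\tilde Z=A^{1/2}Z$. This yields that $U$ is an $H$-valued continuous adapted process with $\mathbb E\sup_t\|U(t)\|^p<\infty$, and that \eqref{ito} holds for $U$. Because $A^{1/2}$ is an isometry, $X=A^{-1/2}U$ is continuous in $V$ and satisfies $\mathbb E\sup_t\|X\|_V^p<\infty$. I then translate \eqref{ito} term by term:
\begin{itemize}
\item $\|U\|=\|X\|_V$;
\item $\|\tilde Z\|_{\mathcal L_2^0}=\|Z\|_{\mathcal L_2^1}$;
\item $\langle U,\tilde Z\,\mathrm{d}W\rangle=\langle X,Z\,\mathrm{d}W\rangle_V$;
\item $\|\tilde Z^*U\|_{U_0}$ is the quadratic-variation term written in \eqref{ito-V};
\item the pairing $\langle \tilde Y,U\rangle$ is the drift term $\langle A^{1/2}Y,A^{1/2}X\rangle$ in \eqref{ito-V}.
\end{itemize}
This gives \eqref{ito-V} for all $t\in[0,T]$.

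The main obstacle is the first step: rigorously justifying that $A^{1/2}$ commutes with the drift integral when $Y$ itself is only in $V^*+V_0^*$-type spaces. I would handle this by pairing the original equation \eqref{ito-} with $A\phi$ for $\phi$ in a core of $A$ (say $\operatorname{span}\{e_k\}$ if the $e_k$ from \textup{(H0)} are eigenvectors of $A$). That pairing identifies $\langle U(t),A^{1/2}\phi\rangle$ with the corresponding pairing of the right-hand side. Density of such $A^{1/2}\phi$ in $V\cap V_0$, together with the integrability of $A^{1/2}Y$, then identifies $U$ as a process of the form \eqref{ito-} $\mathrm{d}t\times\mathbb P$-a.e. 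Lemma \ref{lm-ito} produces a continuous version, which completes the identification.
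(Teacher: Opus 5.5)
Your proposal is correct and follows the paper's route: apply Lemma \ref{lm-ito} to $\widetilde X = A^{1/2}X$ with drift $A^{1/2}Y$ and diffusion $A^{1/2}Z$, then translate each term back to $V$ via the isometry in \eqref{df-V}. The only difference is that you justify commuting $A^{1/2}$ with the drift and stochastic integrals, by testing against $A\phi$ on a dense set, whereas the paper simply declares this step to be clear.
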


\begin{proof} 
Let $t \in [0, T]$.
It is clear that ${\widetilde X}:=A^{1/2} X$ satisfies
$d {\widetilde X}(t) = {\widetilde Y}(t) \,\mathrm{d}t + {\widetilde Z}(t) \,\mathrm{d}W(t)$,  
with ${\widetilde Y}:=A^{1/2} Y$ and ${\widetilde Z} := A^{1/2} Z$.  

As $A^{1/2} X \in K \cap K_0$, $A^{1/2} Y \in K^* + K_0^*$, and $Z\in J$, it is clear that ${\widetilde X} \in K \cap K_0$, ${\widetilde Y} \in K^* + K_0^*$, and ${\widetilde Z} \in J_0$. 
This shows that the conditions in Lemma \ref{lm-ito} hold s.t. \eqref{ito} holds with $X$, $Y$, $Z$ replaced by ${\widetilde X}$, ${\widetilde Y}$, ${\widetilde Z}$ defind above:
\begin{align*}
 \frac1p \|A^{1/2} X(t)\|^{p}  
&= \frac1p \|A^{1/2} X_0\|^{p} + \int_{0}^{t}\|A^{1/2} X\|^{p-2}
  \langle A^{1/2} X, A^{1/2} Z  \,\mathrm{d}W \rangle  \nonumber \\
&\quad 
+ \frac{p-2}2 \int_{0}^{t}\|A^{1/2} X\|^{p-4}
   \|[A^{1/2} Z]^* [A^{1/2} X]\|_{U_0}^2 \,\mathrm{d}t  \nonumber \\
&\quad + \frac12 \int_{0}^{t} \|A^{1/2} X\|^{p-2}
[2 \langle A^{1/2} Y, A^{1/2} X \rangle_{V^*, V}
  + \|A^{1/2} Z\|_{\mathcal L_2^1}^2] \,\mathrm{d}t.
\end{align*}  
This coincides with \eqref{ito-V}, taking into account the definition \eqref{df-V} of the norm and inner product on $V$ in the present setting.
\end{proof}

Now we are at the position to derive moment estimates of Eq. \eqref{see}.

\begin{theorem}  \label{tm-mom}
Let $p \ge 2$ and Assume that $X_0 \in L^p(\Omega; V)$ and \textup{(H0)}-\textup{(H4)} hold.
    Then the solution to Eq. \eqref{see} is unique, satisfying 
\begin{align} \label{sta}
\mathbb E \|X(t)-Y(t)\|^{p_0} \le e^{p_0 c_0 t/2} \mathbb E \|X_0-Y_0\|^{p_0}, \quad \forall ~ t \in [0, T],
\end{align}  
where $Y$ denotes the solution of Eq. \eqref{see} starting from $Y(0)=Y_0$.
Assume furthermore that $A^{1/2} X \in K \cap K_0$ and $A^{1/2} D(X) \in K^* + K_0^*$, then there exists a positive constant $C$ s.t.  
\begin{align}\label{est-X}
& \mathbb E \|X(t)\|_V^p + \mathbb E \int_0^t \|X(r)\|_V^{p-2} \|X(r)\|_{V_0}^{\alpha_0} \,\mathrm{d}s
\le e^{C t} (1+\mathbb E \|X_0\|_V^p).
\end{align}  
Moreover, if $c_2<0$, then
\begin{align} \label{est-X+} 
& \sup_{t \ge 0} \mathbb E \|X(t)\|_V^{p}  
+ \sup_{t \ge 1} \Big\{\frac1t \int_{0}^{t} \mathbb E \|X(r)\|_{V_0}^{\alpha_0} \,\mathrm{d}t \Big\}
\le C ( 1 + \mathbb E \|X_{0}\|_V^{p}). 
\end{align}  
\end{theorem}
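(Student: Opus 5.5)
The proof splits into three parts: the stability estimate \eqref{sta}, which also yields uniqueness; the finite-horizon moment bound \eqref{est-X}; and the uniform-in-time bound \eqref{est-X+} when \(c_2<0\). Throughout, \(p_0\) denotes an exponent in \([2,p]\), the range for which \eqref{mon} controls the It\^o correction, since \((p_0-1)\le(p-1)\).

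\emph{Stability and uniqueness.} Let \(X,Y\) be two solutions from Theorem~\ref{tm-well}. Set \(E:=X-Y\), which satisfies \eqref{ito-} with drift \(D(X)-D(Y)\in K^*+K_0^*\) and diffusion \(G(X)-G(Y)\in J_0\). Lemma~\ref{lm-ito} with exponent \(p_0\) then applies. The correction term is bounded by
\[
\tfrac{p_0-2}{2}\|E\|^{p_0-4}\|(G(X)-G(Y))^*E\|_{U_0}^2\le \tfrac{p_0-2}{2}\|E\|^{p_0-2}\|G(X)-G(Y)\|_{\mathcal L_2^0}^2,
\]
so the total \(\mathrm dt\)-integrand is at most
\[
\tfrac12\|E\|^{p_0-2}\bigl[2\langle D(X)-D(Y),E\rangle_{V^*,V}+(p_0-1)\|G(X)-G(Y)\|_{\mathcal L_2^0}^2\bigr]\le \tfrac{c_0}{2}\|E\|^{p_0}
\]
by \eqref{mon}. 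I localize with stopping times \(\sigma_R:=\inf\{t:\|E(t)\|\ge R\}\wedge T\) to remove the martingale term, take expectations, and let \(R\to\infty\). Lemma~\ref{lm-ito} gives \(\mathbb E\sup_t\|E(t)\|^{p_0}<\infty\), so dominated convergence applies. Gronwall's inequality then gives \(\mathbb E\|E(t)\|^{p_0}\le e^{p_0c_0t/2}\mathbb E\|E(0)\|^{p_0}\), which is \eqref{sta}. Taking \(X_0=Y_0\) shows \(X(t)=Y(t)\) a.s. for each \(t\). Since both processes are continuous in \(H\), pathwise uniqueness follows.

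\emph{Moment bound \eqref{est-X}.} Under the extra assumptions, Corollary~\ref{cor-ito} applies with \(Y=D(X)\) and \(Z=G(X)\in J\); the latter holds by \eqref{G-grow} together with \(X\in K\cap K_0\). This gives the It\^o formula \eqref{ito-V} for \(\|X\|_V^p\) and the bound \(\mathbb E\sup_t\|X\|_V^p<\infty\). I bound the correction term by \(\tfrac{p-2}{2}\|X\|_V^{p-2}\|G(X)\|_{\mathcal L_2^1}^2\), so the drift integrand is at most \(\tfrac12\|X\|_V^{p-2}[2\langle D(X),X\rangle_V+(p-1)\|G(X)\|_{\mathcal L_2^1}^2]\).

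The main obstacle is that (H3) states \eqref{coe} only for \(v\) in the finite-dimensional sets \(H^n\), whereas here it is needed along the solution itself. I will first try to pass \eqref{coe} to all \(v\) with \(A^{1/2}v\in V\cap V_0\) by approximating \(v\) with \(P_nv\). The right side of \eqref{coe} converges under this approximation. On the left side, \(\langle D(P_nv),P_nv\rangle_V\) should converge by hemicontinuity and the growth conditions, and \(\|G(P_nv)\|_{\mathcal L_2^1}^2\) should satisfy at least a \(\liminf\) inequality. If this pointwise extension fails, the fallback is to avoid pointwise use of \eqref{coe}: derive the integrated inequality for \(X^n\) from \eqref{Xn-lp} and pass to the limit \(n\to\infty\). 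This uses weak lower semicontinuity of the left side of \eqref{est-Xn} together with \(X^{n_k}\rightharpoonup X\) from the proof of Theorem~\ref{tm-well}. Either route yields the integrand bound \(c_1'+c_2'\|X\|_V^p-c_3'\|X\|_V^{p-2}\|X\|_{V_0}^{\alpha_0}\). Taking expectations, with the martingale vanishing thanks to the integrability supplied by Corollary~\ref{cor-ito} and localization, and applying Gronwall gives \eqref{est-X}.

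\emph{Uniform bound \eqref{est-X+}.} When \(c_2<0\), the same computation gives
\[
\tfrac{\mathrm d}{\mathrm dt}\mathbb E\|X\|_V^p\le c_1'-\lambda\,\mathbb E\|X\|_V^p-c_3'\,\mathbb E\|X\|_V^{p-2}\|X\|_{V_0}^{\alpha_0}
\]
for some \(\lambda>0\), after applying Young's inequality to \(c_1\|X\|_V^{p-2}\). Dropping the last term and using a Gronwall comparison gives \(\sup_{t\ge0}\mathbb E\|X(t)\|_V^p\le \mathbb E\|X_0\|_V^p+c_1'/\lambda\). For the time-averaged term, I rerun the argument with \(p=2\), where the weight \(\|X\|_V^{p-2}\) equals one. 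Integrating over \([0,t]\) gives \(c_3'\int_0^t\mathbb E\|X\|_{V_0}^{\alpha_0}\,\mathrm ds\le \mathbb E\|X_0\|_V^2+c_1't\). Dividing by \(t\ge1\) and using \(\mathbb E\|X_0\|_V^2\le 1+\mathbb E\|X_0\|_V^p\) completes \eqref{est-X+}.
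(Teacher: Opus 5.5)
Your three steps coincide with the paper's proof. For \eqref{sta} and uniqueness you use Lemma~\ref{lm-ito} in $H$ with exponent $p_0\le p$, together with \eqref{mon} and Gronwall. For \eqref{est-X} you use Corollary~\ref{cor-ito}, stopping times and Gronwall. For \eqref{est-X+} you apply Young's inequality to $c_1\|X\|_V^{p-2}$ with an exponential weight (your comparison argument) and rerun the estimate with $p=2$. Where you go beyond the paper is in noticing that (H3) is only assumed on the Galerkin spans, whereas \eqref{coe} is needed at $v=X(t,\omega)$. The paper does not address this at all: it applies \eqref{coe} along the solution ``as in the proof of \eqref{Xn-lp}'', i.e.\ it tacitly treats \eqref{coe} as holding on the solution. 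This is harmless in the SRDE application, where \eqref{coe+} is verified by direct computation for every $v\in\dot H^2$.

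However, neither of your two repairs closes this step as written. For the first, (H1) only gives continuity of $\lambda\mapsto\langle D_i(u+\lambda v),w\rangle_{V^*,V}$ along lines, and (H4) bounds $D$ only in $V^*$ and $V_0^*$. Even the demicontinuity of $D$ into $V^*$ that (H1)--(H2) provide only lets you test against a fixed element of $V$. So nothing gives convergence of $\langle D(P_nv),P_nv\rangle_V=\langle A^{1/2}D(P_nv),A^{1/2}P_nv\rangle$.

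For the second, passing the Galerkin estimates to the limit yields integrated bounds, not the pointwise integrand bound that you then feed into Gronwall and into the differential inequality for \eqref{est-X+}. So the claim that ``either route yields the integrand bound'' is wrong. The fallback can still be made to work for $\mathbb E\|X(t)\|_V^p$ and for $\frac1t\int_0^t\mathbb E\|X\|_{V_0}^{\alpha_0}$: carry out the Gronwall (resp.\ $c_2<0$) computation for $X^n$, where \eqref{coe} is available, and then use weak(-star) lower semicontinuity. But the mixed term $\mathbb E\int_0^t\|X\|_V^{p-2}\|X\|_{V_0}^{\alpha_0}\,\mathrm ds$ in \eqref{est-X} is not obviously weakly lower semicontinuous, because $v\mapsto\|v\|_V^{p-2}\|v\|_{V_0}^{\alpha_0}$ need not be convex. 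So you should state explicitly that \eqref{coe} is required along the solution (e.g.\ for all $v$ with $A^{1/2}v\in V\cap V_0$), which is what the paper implicitly assumes, rather than try to derive it from (H0)--(H4).
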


\begin{proof}  
By Theorem \ref{tm-well}, $X \in L_t^\infty L_\omega^p V \cap K_0 \subset K \cap K_0$.
Due to \eqref{D-grow} and \eqref{G-grow}, $Y=D_1(X) + D_0(X) \in K^* + K_0^*$, and $Z=G(X) \in J_1$, so the conditions in Lemma \ref{lm-ito} hold.
By the It\^o formula \eqref{ito} in $H$ applied to $\|\cdot\|^p$ and \eqref{mon}, we have   
\begin{align*}
& \mathbb E \|X(t)-Y(t)\|^{p_0}  \\
& \le \mathbb E \|X_0-Y_0\|^{p_0} + \frac {p_0} 2 \mathbb E \int_{0}^{t}\|X-Y\|^{p_0-2}
[2 \langle D(X)-D(Y), X-Y \rangle_{V^*, V} \\
& \qquad + (p_0-1) \|G(X)-G(Y)\|_{\mathcal L_2^0}^2] \,\mathrm{d}t \\
& \le \mathbb E \|X_0-Y_0\|^{p_0} + \frac {p_0 c_0} 2 \mathbb E \int_{0}^{t} \|X-Y\|^{p_0} \,\mathrm{d}t.
\end{align*}   
From Gronwall lemma, we have the stability result \eqref{sta}, which shows the uniqueness of the solution with $X_0=Y_0$: $X(t)=Y(t)$, $\mathbb P\text{-a.s.}$, for all $t \in [0, T]$. Moreover, pathwise uniqueness follows from the pathwise continuity of $ X$ and $Y$ in $H$.

If $A^{1/2} X \in K \cap K_0$, $A^{1/2} D(X) \in K^* + K_0^*$, then the conditions in Corollary \ref{cor-ito} hold.
Applying the It\^o formula \eqref{ito} in $V$ to $\|\cdot\|_V^p$ and using the stopping time argument, as in the proof of \eqref{Xn-lp}, we obtain  
\begin{align} \label{X-lp}
& \mathbb E \|X(t)\|_V^p + c_3' \int_0^t \mathbb E [ \|X\|_V^{p-2} \|X\|_{V_0}^{\alpha_0}] \,\mathrm{d}s
 \le \mathbb E \|X_0\|_V^p + c_1' t + c_2' \int_0^t \mathbb E \|X\|_V^p \,\mathrm{d}s.
\end{align}    
By Gronwall lemma, we get \eqref{est-X}.
 
Finally, to prove the uniform-in-time estimate \eqref{est-X+} in the case $c_2<0$, we note that, by Young inequality, the constant $c_2'$ in \eqref{X-lp} can be chosen as $c_2'=- (- p c_2/2-\epsilon)$ for $\epsilon <-p c_2/2$.
Then, using the product rule, we derive
\begin{align*}
& \mathbb E [e^{\gamma t}\|X(t)\|_V^p] 
 + (- p c_2/2-\gamma-\epsilon) \mathbb E \int_{0}^{t} e^{\gamma s} \|X(s)\|_V^p \,\mathrm{d}t  \nonumber \\
& \quad + c_3' \mathbb E \int_{0}^{t} e^{\gamma s} \|X(s)\|_V^{p-2}  \|X(s)\|_{V_0}^{\alpha_0} \,\mathrm{d}t 
\le \mathbb E \|X_0\|_V^p + c_1' \int_0^t e^{\gamma s}  \,\mathrm{d}s,
\quad \forall ~ \gamma \in \Big[0, -\frac{p c_2}2 \Big).
\end{align*} 
Multiplying $e^{-\gamma t}$ on both hand sides above and chosing $\epsilon <-p c_2/2-\gamma$, we obtain    
\begin{align*}
& \mathbb E \|X(t)\|_V^{p} + (- p c_2/2-\gamma-\epsilon) \mathbb E \int_{0}^{t} e^{-\gamma (t-s)} \|X\|_V^p \,\mathrm{d}t   \nonumber \\
& \quad + c_3' \mathbb E \int_{0}^{t} e^{-\gamma (t-s)} \|X\|_V^{p-2}  \|X\|_{V_0}^{\alpha_0} \,\mathrm{d}t
\le e^{-\gamma t} \mathbb E \|X_{0}\|_V^{p} + c_1' \int_{0}^{t} e^{-\gamma s} \,\mathrm{d}s.
\end{align*} 
Then we conclude \eqref{est-X+} (for the second summand in \eqref{est-X+}, we take $p=2$ and $\gamma=0$ on the above inequality).
\end{proof}

Finally, we establish higher regularity under the following stronger but more useful conditions, which replace \textup{(H3)} and \textup{(H4)}. For ease of application, we omit the spaces $K_0$ and $K_0^*$ here.

\begin{theorem}  \label{tm-H2} 
Assume that $X_0 \in L^p(\Omega; V)$ and \textup{(H0)}-\textup{(H4)} hold, with \textup{(H3)} and \textup{(H4)} replaced, resp., by the following assumptions:
\begin{itemize}  
\item[\textup{(H3')}] For any $n \in \mathbb{N}_+$, the operator $D$ maps $H^n:=\{e_k\}_{k=1}^n \subset V_2$ into $V$, and there exist constants $\hat c_1, \hat c_3 \in \mathbb R$ and $\hat c_2, \hat c_4>0$ s.t. for all $v \in H^n$,
\begin{align} \label{coe+} 
2 \langle D(v), v \rangle_{V} + (p-1) \|G(v)\|_{\mathcal L_2^1}^2 
& \le \hat c_1- \hat c_2 \|A v\|^2 + \hat c_3 \|v\|_V^2 - \hat c_4 \| v \|_{V_0}^{\alpha_0}.
\end{align}

    \item[\textup{(H4')}] There exist positive constants $\hat c_5, \hat c_6$ and $\hat c_7$ s.t. for all $u \in V$ and $v \in V_2$,
    \begin{align} 
    \|D(u)\|_{V^*} \le \hat c_5 (1 + \|u\|_V + \|u\|_{V_0}^{\alpha_0-1}), & \quad 
    \|D(v)\| \le \hat c_6 (1 + \|A v\| + \|v\|_V^{\alpha_0-1}), \label{D-grow+} \\
\|G(v)\|_{\mathcal L_2^1}^2 
   \le \hat c_7 (&1 + \|A v\|^2 + \|v\|_{V_0}^{\alpha_0}). \label{G-grow+}  
    \end{align}     
\end{itemize}  
    Then the solution to Eq. \eqref{see} belongs to $L_{t, \omega}^2 V_2$ s.t.   
\begin{align}\label{est-X2}
& \mathbb E \|X(t)\|_V^p + \mathbb E \int_0^t \|A X\|^2 \,\mathrm{d}s
+ \mathbb E \int_0^t \|X\|_{V_0}^{\alpha_0} \,\mathrm{d}s
\le e^{C t} (1+\mathbb E \|X_0\|_V^p),
\end{align}  
for any $p \ge 2(\alpha_0-1)$.
Moreover, if $\hat c_3<0$, then 
\begin{align} \label{est-X2+} 
& \sup_{t \ge 0} \mathbb E \|X(t)\|_V^{p}  
+ \sup_{t \ge 1} \Big\{\frac1t \int_{0}^{t} \mathbb E \|A X\|^{\alpha_0} + \mathbb E \|X\|_{V_0}^{\alpha_0} \,\mathrm{d}t \Big\} 
\le C ( 1 + \mathbb E \|X_{0}\|_V^{p}). 
\end{align}  
\end{theorem}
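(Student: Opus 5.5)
The plan is to run the Galerkin argument of Lemma \ref{lm-Xn} once more, now with the stronger coercivity (H3'). The extra dissipation $-\hat c_2\|Av\|^2$ will give uniform bounds on $X^n$ in $L_{t,\omega}^2 V_2$. Passing to the limit then puts $X$ in $L_{t,\omega}^2V_2$. That regularity is exactly what Corollary \ref{cor-ito} needs for the $V$-valued Itô formula, and with it the moment bounds follow as in Theorem \ref{tm-mom}.

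\textbf{Step 1 (Galerkin bound).} Since $H^n\subset V_2$, identity \eqref{PnV} remains valid. In the finite-dimensional Itô formula \eqref{ito-n} I first take $p=2$ and insert \eqref{coe+}. After the stopping-time localization and Fatou's lemma this gives, uniformly in $n$,
\[
\mathbb E\|X^n(t)\|_V^2+\hat c_2\,\mathbb E\!\int_0^t\|AX^n\|^2\,\mathrm ds+\hat c_4\,\mathbb E\!\int_0^t\|X^n\|_{V_0}^{\alpha_0}\,\mathrm ds\le \mathbb E\|X_0\|_V^2+\hat c_1t+\hat c_3\!\int_0^t\mathbb E\|X^n\|_V^2\,\mathrm ds .
\]
Next I take general $p\ge2$. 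The bracket is multiplied by $\tfrac p2\|X^n\|_V^{p-2}$, and I handle the terms as follows:
\begin{itemize}
\item the term $\|X^n\|_V^{p-2}\|AX^n\|^2$ is nonpositive and may be dropped;
\item the term $\hat c_3\|X^n\|_V^p$ is absorbed by Gronwall;
\item the weighted term $\|X^n\|_V^{p-2}\|X^n\|_{V_0}^{\alpha_0}$ comes for free.
\end{itemize}
This yields $\sup_t\mathbb E\|X^n\|_V^p\le e^{Ct}(1+\mathbb E\|X_0\|_V^p)$. The factor $p-1$ in (H3') is what controls the extra Itô correction $\tfrac{p(p-2)}2\|X^n\|_V^{p-4}\|[\cdots]^*X^n\|^2\le \tfrac{p(p-2)}2\|X^n\|_V^{p-2}\|G\|_{\mathcal L_2^1}^2$, exactly as in \eqref{Xn-lp}.

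\textbf{Step 2 (limit).} The estimates give uniform bounds on $AX^n$ in $L_{t,\omega}^2H$. A further subsequence then satisfies $AX^{n_k}\rightharpoonup \eta$, and $\eta=A\bar X$ by closedness of $A$, tested against elements of $V_2$. Weak lower semicontinuity transfers the Step~1 bounds to $X$, so $X\in L_{t,\omega}^2V_2$.

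\textbf{Step 3 (hypotheses of Corollary \ref{cor-ito}).} I need $A^{1/2}X\in K$, which is exactly $X\in L^2_{t,\omega}V_2$. I also need $A^{1/2}D(X)\in K^*$; this holds since $\|A^{1/2}D(X)\|_{V^*}\simeq\|D(X)\|$, and the second bound in \eqref{D-grow+} gives
\[
\|D(X)\|\le \hat c_6(1+\|AX\|+\|X\|_V^{\alpha_0-1}).
\]
Its square is integrable because $\mathbb E\int\|X\|_V^{2(\alpha_0-1)}\,\mathrm ds<\infty$ when $p\ge 2(\alpha_0-1)$ by Step 1; this is where the restriction on $p$ enters. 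Finally $G(X)\in J$ by \eqref{G-grow+}. As stated at the theorem, the spaces $K_0,K_0^*$ are dropped and the $K$-part of the corollary is used alone. The Itô formula \eqref{ito-V} in $V$ then holds. Repeating the argument of \eqref{X-lp} with \eqref{coe+}, Gronwall gives \eqref{est-X2}, the $\|AX\|^2$ and $\|X\|_{V_0}^{\alpha_0}$ integrals coming from the $p=2$ case.

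\textbf{Step 4 (uniform-in-time bound).} When $\hat c_3<0$, I argue as in the proof of \eqref{est-X+}: multiply by $e^{\gamma t}$, then by $e^{-\gamma t}$, and take $\gamma=0$, $p=2$ for the time-averaged integrals. This yields \eqref{est-X2+}; the average of $\|AX\|^2$ is what the dissipation naturally controls.

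\textbf{Main obstacle.} The delicate point is Step 3, verifying the integrability of $A^{1/2}D(X)$. The superlinear part $\|X\|_V^{\alpha_0-1}$ must be paired with the moment bound obtained independently of the $V_2$-estimate. If this circularity is a problem, I would first use the Galerkin-level bound of Step 1 and transfer it to $X$ by lower semicontinuity before applying the corollary.
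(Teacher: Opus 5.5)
Your proposal is correct and follows essentially the same route as the paper. It obtains uniform Galerkin bounds from the extra dissipation $-\hat c_2\|Av\|^2$ and passes to the limit to get $X\in L_{t,\omega}^2V_2$. It checks $A^{1/2}X\in K$ and $A^{1/2}D(X)\in K^*$ via the second bound in \eqref{D-grow+} together with $p\ge 2(\alpha_0-1)$, then applies the $V$-valued It\^o formula of Corollary~\ref{cor-ito} for the $p$-th moment, taking the $\|AX\|^2$ and $\|X\|_{V_0}^{\alpha_0}$ integrals from the Galerkin level by weak lower semicontinuity. Your remark that the dissipation only controls the time average of $\mathbb E\|AX\|^2$ is right: the exponent $\alpha_0$ on $\|AX\|$ in \eqref{est-X2+} is presumably a typo, since the paper's argument (lower semicontinuity in $L_{t,\omega}^2V_2$) likewise yields only the square.
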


\begin{proof} 
According to the proof of Lemma \ref{lm-Xn}, under the current condition, we have 
\begin{align*}
& \sup_{t \in [0, T]} \mathbb E \|X_{t}^{n}\|_{V}^{p} 
+ \mathbb E \int_{0}^T \|X^n\|_V^{p-2} \|A X^n\|^2 \,\mathrm{d}t
+ \mathbb E \int_{0}^T \|X^n\|_V^{p-2} \|X^n\|_{V_0}^{\alpha_0} \,\mathrm{d}t \\
& \le e^{Ct} (1+\mathbb E \|X_0\|_V^{p}). 
\end{align*}
By the proof of Theorem \ref{tm-well}, we get $X \in L_{t, \omega}^2 V_2$ and thus $A^{1/2} X \in K$, satisfying   
\begin{align*}
& \sup_{t \in [0, T]} \mathbb E \|X(t)\|_V^p 
+ \int_{0}^T \mathbb E \|A X\|^2 \,\mathrm{d}t
+ \int_{0}^T \mathbb E \|X^n\|_{V_0}^{\alpha_0} \,\mathrm{d}t 
<\infty.
\end{align*}       
Now, by the condition \eqref{D-grow+}, 
$\|D(X)\|_{L_{t, \omega}^2 H}
\le C(1 + \|X\|_{L_{t, \omega}^2 V_2} + \|X\|_{L_{t, \omega}^{2(\alpha_0-1)} V}^{\alpha_0-1})$, 
and the right-hand side is finite whenever $p \ge 2(\alpha_0-1)$. Hence, $D(X) \in L_{t, \omega}^2 H$.

Therefore, \(A^{1/2}X\in K\) and \(A^{1/2}D(X)\in K^*\).  
Consequently, applying the It\^o formula \eqref{ito} in \(V\) to \(\|X\|_V^p\), with general \(p\ge 2(\alpha_0-1)\), yields the estimates for the first terms in \eqref{est-X2} and \eqref{est-X2+}, resp.  
The estimates for the remaining two terms follow from the uniform estimates for the corresponding Galerkin approximations and from the weak lower semicontinuity of the norms in the spaces \(L_{t,\omega}^2V_2\) and \(L_{t,\omega,\xi}^{\alpha_0}\). 
\end{proof}

The above arguments can also be applied to establish the well-posedness of Eq.~\eqref{see} in \(V\) for a diffusion coefficient satisfying local monotonicity and polynomial growth, and in \(H\) for the case of coupled coercivity and polynomial growth.

\begin{remark}
Eq.~\eqref{see} has a unique solution \(X\in L_t^\infty L_\omega^pV\), provided that \(X_0\in L^p(\Omega;V)\) and \textup{(H0)}--\textup{(H4)} hold, where the monotonicity condition \eqref{mon} in \textup{(H2)} is replaced by the following local monotonicity condition in \(H\), valid for all \(u,v\in V\):
\begin{align*}
2\langle D(u)-D(v),u-v\rangle_{V^*,V}
+(p-1)\|G(u)-G(v)\|_{\mathcal L_2^0}^2
\le [c_0+\rho(v)]\|u-v\|^2,
\end{align*}
where \(\rho:V\to\mathbb R_+\) is a measurable function, and the growth condition
$|\rho(v)|\le c_8(1+\|v\|_V^2+\|v\|_{V_0}^{\alpha_0})$, $v \in V$, with \(c_8>0\), is added to \textup{(H4)}. We also note that in the linear-growth diffusion case, as in \cite{Liu13}, we can use the BDG inequality to show the \(L_t^\infty L_\omega^pV\)-estimate for the solution.
Therefore, the well-posedness result and moment estimates in Theorem~\ref{tm-well} give an affirmative answer to the continuity conjecture for \(\|X\|_V^p\) proposed in \cite[Remark 1.2(2)]{Liu13}, and generalize those in \cite[Theorem 1.1]{LR10} and \cite[Theorem 1.1]{Liu13} to the polynomial-growth diffusion case.
\end{remark}

\begin{remark}  \label{rk-H}
Eq. \eqref{see} possesses a unique solution in \(H\) satisfying \eqref{sta}--\eqref{est-X+}, with \(\|\cdot\|_V\) replaced by \(\|\cdot\|\), provided that \(X_0\in L^p(\Omega;H)\) with \(p\ge 2\), and that \textup{(H1)}--\textup{(H4)} hold, where \textup{(H3)} and \textup{(H4)} are replaced, resp., by the following coercivity and growth conditions in \(H\):
\begin{enumerate}
\item[\textup{(H3$^*$)}] There exist positive constants \(\tilde c_1,\tilde c_2,\tilde c_3\) s.t., for all \(v\in V\cap V_0\),
\begin{align*}
2\langle D(v),v\rangle_{V^*,V}
+(p-1)\|G(v)\|_{\mathcal L_2^0}^2
\le \tilde c_1+\tilde c_2\|v\|_V^2-\tilde c_3\|v\|_{V_0}^{\alpha_0}.
\end{align*}

\item[\textup{(H4$^*$)}] There exist positive constants \(\tilde c_4,\tilde c_5,\tilde c_6\) s.t., for all \(v\in V\cap V_0\),
\begin{align*} 
    \|D_1(v)\|_{V^*}  \le \tilde c_4 (1 + \|v\|_V), & \quad  
\|D_0(v)\|_{V_0^*} \le \tilde c_5 (1 + \|v\|_{V_0}^{\alpha_0-1}),  \\
 \|G(v)\|_{\mathcal L_2^0}^2  \le & \hat c_6 (1 + \|v\|_V^{2} + \|v\|_{V_0}^{\alpha_0}).  
    \end{align*} 
\end{enumerate}

We note that in this case the assumption \(V\subset V_0\) is not needed, since \textup{(H0)} is not used.
Moreover, the corresponding transition semigroup \((P_t)\) is Feller, with
$|P_t\phi(x)-P_t\phi(y)|
\le e^{c_0t/2} \|x-y\|$, $t \ge 0$, $x,y \in H$, 
for any \(1\)-Lipschitz function \(\phi\) in $H$.
Assume that \(V\) or \(V_0\) is compactly embedded in \(H\). Then \((P_t)\) admits an invariant measure on \(H\).
If \(c_0<0\), then \((P_t)\) has a unique invariant probability measure \(\mu\) with \(\mu(\|\cdot\|^2)<\infty\), and it converges exponentially fast to \(\mu\): 
$|P_t\phi(x)-\mu(\phi)|
\le e^{c_0t/2} \bigl(\mu(\|x-\cdot\|^2)\bigr)^{1/2}$, $t \ge 0$, $x,y \in H$, 
for any \(1\)-Lipschitz function \(\phi\) in $H$.
\end{remark}

\section{Well-posedness and Regularity of SRDE}
\label{sec3}

From this section on, let $q>0$ and choose $A=-\Delta$ (with DBC), $H=U:=\{u \in L_\xi^2:~ u|_{\partial \mathcal O}=0\}$ with norm $\|\cdot\|$, $V:=\dot H^1=\{u, \nabla u \in L_\xi^2:~ u|_{\partial \mathcal O}=0\}$ with norm $\|\cdot\|_1:=\|(-\Delta)^{1/2} \cdot\|$, $V_2:=\dot H^2=\{u, \nabla u, \Delta u \in L_\xi^2:~ u|_{\partial \mathcal O}=0\}$ with norm $\|\cdot\|_2:=\|\Delta \cdot\|$, and $V_0:=\{u \in L_\xi^{q+2}:~ u|_{\partial \mathcal O}=0\}$ with norm $\|\cdot\|_{L_\xi^{q+2}}$.

Then $V^*=\dot H^{-1}$, $V_0^*:=\{u \in L_\xi^{(q+2)^*}:~ u|_{\partial \mathcal O}=0\}$, $\dot H^1={\rm Dom}(A^{1/2})$, $\dot H^2={\rm Dom}(A)$, and the orthgonal condition \textup{(H0)} and all the settings in Section \ref{sec2} are valid.
Moreover, we have the following Poincar\'e inequality, with \(\lambda_1\) denoting the first eigenvalue of \(A\):
\begin{align}\label{poin}
\|u\|_1^2\ge \lambda_1\|u\|^2,\quad u\in\dot H^1;
\qquad
\|v\|_2^2\ge \lambda_1\|v\|_1^2,\quad v\in\dot H^2.
\end{align}

Throughout, we assume that $q>1$ when $d=1,2$ and $q \in (1, 3]$ when $d=3$, s.t. the following Sobolev embeddings hold:
\begin{align} \label{emb}
\dot H^1\hookrightarrow L_\xi^{2(q+1)} \hookrightarrow L_\xi^{q+2}
\hookrightarrow L_\xi^2 \hookrightarrow L_\xi^{(q+2)^*} \hookrightarrow L_\xi^{2(q+1)^*} \hookrightarrow \dot H^{-1},
\quad d=1,2,3, 
\end{align}
Then, we can define the Nemytskii operators $F: \dot H^1 \rightarrow \dot H^{-1}$ and $G: H \rightarrow \mathcal L_2^0$ associated with $f$ and $g$, resp., by
\begin{align} 
F(u)(\xi):= f(u(\xi)), & \quad u \in \dot H^1, ~ \xi \in \mathcal O, \label{df-F}\\
G(u) g_m(\xi):= g(u(\xi)) g_m(\xi), & \quad u \in H,~ m \in \mathbb N_+,~ \xi \in \mathcal O.
\label{df-G}
\end{align} 

With these preliminaries, Eq.~\eqref{spde} is equivalent to Eq.~\eqref{see} with
\[
D(u):=\Delta u+F(u),\qquad u\in\dot H^1,
\]
where \(F\) and \(G\) are defined in \eqref{df-F} and \eqref{df-G}, resp. 
 
Although our main results in this section apply to general SPDEs, including the stochastic \(p\)-Laplace equation, stochastic porous media equations, and fast diffusion equations studied in \cite{LR10, Liu13}, we focus primarily on the SRDE \eqref{spde}.

\subsection{Well-posedness of SRDE}
\label{sec3.1}

Since the diffusion coefficient is allowed to grow polynomially, we mainly focus on the trace-class case, i.e.,
$\sum_{m=1}^{\infty}q_m<\infty$.
For simplicity, we assume that the eigenvector \(g_m\) of \(Q\) is bounded for each \(m\in\mathbb N_+\) s.t. 
\begin{align} \label{cq}
C_Q:=\sum_{m \in \mathbb N_+} q_m \|g_m\|_{L_\xi^\infty}^2<\infty, \quad 
C_Q':=\sum_{m \in \mathbb N_+} q_m \|g'_m\|_{L_\xi^\infty}^2<\infty.
\end{align}

\begin{assumption} \label{ap}   
The functions $f, g: \mathbb R \to \mathbb R$ are continuously differentiable, and there exist constants $p \ge 2$, $L_0, L_1 \in \mathbb R$, $L_2 > 0$, and $L_3, L_4, L_5 \ge 0$, s.t. for all $\xi, \eta \in \mathbb R$,  
\begin{align} 
2 (\xi-\eta) [f(\xi)-f(\eta)]  + C_Q (p-&  1)  |g(\xi)-g(\eta)|^2
 \le L_0 |\xi-\eta|^2, \label{fg-mon} \\ 
f'(\xi) \le & L_1 - L_2 |\xi|^q, \label{f-coe+} \\ 
 |f'(\xi)| \le & L_3 (1+ |\xi|^q), \label{f-grow} \\ 
  |g'(\xi)|^2 \le & L_4 + L_5 |\xi|^{q/2}. \label{g-grow+}
\end{align}  
 \end{assumption}

The condition \eqref{f-coe+} is stronger than the following standard coercivity condition for some constants \(L_1'\in\mathbb R\) and \(L_2'>0\):
\begin{align}\label{f-coe}
\xi f(\xi)\le L_1'-L_2'|\xi|^{q+2},\qquad \xi\in\mathbb R.
\end{align}
A dual argument, in combination with the condition \eqref{f-grow}, implies that  
 \begin{align}  \label{F-} 
\|F(u)-F(v)\|_{-1} & \le C (1+\|u\|^q_1+\|v\|^q_1) \|u-v\|, 
\quad u, v \in \dot H^1.
\end{align}  
It follows from \eqref{fg-mon} and \eqref{f-coe} that the following polynomial growth conditions for \(g\) hold for some positive constants \(L_4'\) and \(L_5'\):
\begin{align}\label{g-grow-}
|g'(\xi)|^2\le L_4'(1+|\xi|^q),\qquad
|g(\xi)|^2\le L_5'(1+|\xi|^{q+2}),\qquad \xi\in\mathbb R.
\end{align}
However, we need the stronger growth condition \eqref{g-grow+}, which yields the following growth conditions for some positive constants \(\hat L_4\) and \(\hat L_5\), with
$\tilde g(\cdot):=\int_0^\cdot |g'(\eta)|^2\,d\eta$: 
\begin{align}\label{g-grow}
|\tilde g(\xi)|^2\le \hat L_4(1+|\xi|^{q+2}),\qquad
|g(\xi)|^2\le \hat L_5(1+|\xi|^{q/2+2}),\qquad \xi\in\mathbb R.
\end{align} 

We illustrate that if the functions \(f\) and \(g\) satisfy Assumption \ref{ap} with the stronger growth condition \eqref{g-grow+} replaced by \eqref{g-grow-}, then the growth condition \eqref{G-grow+} for the corresponding Nemytskii operator \(G\) will generally not be valid. Indeed, one then has to control the term
$\||v|^q\nabla v\|^2
\approx \||v|^{q+1}\Delta v\|_{L_\xi^1}$, 
which cannot be bounded by the right-hand side term
$c_5+c_6\|\Delta v\|_{L^2(\mathcal O)}^2
+c_7\|v\|_{L^{q+2}(\mathcal O)}^{q+2}$ 
in \eqref{G-grow+}. In the following result, we instead use the term
$\||v|^{q/2}\nabla v\|^2$ with a negative coefficient to bound
$\|v\|_{L^{q+2}(\mathcal O)}^{q+2}$
via the Poincar\'e inequality \eqref{poin}.

\iffalse
Then, \eqref{g-grow+} yields that  
{\color{blue}
 \begin{align*}
\|G(u)-G(v)\|_{\mathcal L_2^0}^2 
& \le K_0 C_Q \|(1+|u|^{q/2}+|v|^{q/2}) |u-v|^2\|_{L_\xi^1} \nonumber \\
& \le K_0 C_Q \|(1+|u|^{q/2}+|v|^{q/2})\|_{L_\xi^{(q+2)/(q/2)}} \| |u-v|^2\|_{L_\xi^{(q+2)/{(q/2+2)}}} \nonumber \\
& \le C (1+\|u\|_{L_\xi^{q+2}}^{q-2}+\|v\|_{L_\xi^{q+2}}^{q-2})\|u-v\|^2_{L_\xi^{4(q+2)/(q+4)}},  
\end{align*}  
This method can only derive order $<1/2$, as one needs to derive H\"older regularity of the solution to Eq. \eqref{spde} under the $L_\xi^{4(q+2)/(q+4)}$-norm. 
}
\fi

 \begin{theorem}  \label{tm-spde}
 For Eq.~\eqref{spde} with coefficients \(f\) and \(g\) satisfying Assumptions~\ref{ap}, the conditions \textup{(H0)}-\textup{(H2)}, \textup{(H3')}, and \textup{(H4')} hold with
$\alpha_0=q+2$, $c_0=L_0-2\lambda_1$, 
and some other positive constants \(\hat c_i\), \(i=1,\dots,7\).
Consequently, for any \(X_0\in L^p(\Omega;\dot H^1)\) with \(p\ge 2(q+1)\), Eq.~\eqref{spde} possesses a unique solution in
$L_\omega^p\mathcal C_t\dot H^1
\cap L_{t,\omega}^2\dot H^2
\cap L_{t,\omega,\xi}^{q+2}$, 
and there exists a constant $C$ s.t. 
\begin{align} \label{est-spde} 
& \|X\|^p_{L_t^\infty L_\omega^p \dot H^1}   
+ \|X\|_{L_{t, \omega}^2 \dot H^2}^{\alpha_0}  
+ \|X\|_{L_{t, \omega, \xi}^{q+2}}^{q+2} 
 \le e^{CT} ( 1 + \mathbb E \|X_{0}\|_1^{p}). 
\end{align}  
Moreover, if 
\begin{align} \label{con-uni}
(p-1)C_Q L_4 + \frac{[(p-1)C_Q L_5]^2}{8 L_2}< 2 (\lambda_1-L_1),
\end{align}  
then  
\begin{align} \label{est-spde+} 
& \sup_{t \ge 0} \mathbb E \|X(t)\|_1^{p}  
+ \sup_{t \ge 1} \Big\{\frac1t \int_{0}^{t} \mathbb E \|X\|_2^{\alpha_0} \,\mathrm{d}t \Big\}
+ \sup_{t \ge 1} \Big\{\frac1t \int_{0}^{t} \mathbb E \|X\|_{L_\xi^{q+2}}^{q+2} \,\mathrm{d}t \Big\} \nonumber \\
& \le C ( 1 + \mathbb E \|X_{0}\|_1^{p}). 
\end{align} 
\end{theorem}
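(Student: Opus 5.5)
The plan is to verify the abstract hypotheses one by one for $D(u)=\Delta u+F(u)$ and $G$ from \eqref{df-G}. After that, the conclusions follow from Theorems~\ref{tm-well}, \ref{tm-mom} and \ref{tm-H2}.

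\textbf{Step 1: the easy hypotheses.} Condition (H0) holds with the Dirichlet eigenfunctions of $A$. For (H1), split $D_1=\Delta$ and $D_0=F$. Hemicontinuity of $F$ follows from continuity of $f$, the growth bound \eqref{f-grow}, and dominated convergence.

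\textbf{Step 2: monotonicity (H2).} Use $\langle \Delta(u-v),u-v\rangle=-\|u-v\|_1^2\le -\lambda_1\|u-v\|^2$ by \eqref{poin}. Integrate \eqref{fg-mon} pointwise and bound
\[
\|G(u)-G(v)\|_{\mathcal L_2^0}^2=\sum_m q_m\|(g(u)-g(v))g_m\|^2\le C_Q\|g(u)-g(v)\|^2 .
\]
Together these give \eqref{mon} with $c_0=L_0-2\lambda_1$.

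\textbf{Step 3: coercivity (H3').} For $v\in H_n\subset\dot H^2$, write $\langle D(v),v\rangle_1=\langle \Delta v+F(v),-\Delta v\rangle$. This equals
\[
-\|v\|_2^2+\langle f'(v)\nabla v,\nabla v\rangle ,
\]
after integrating by parts with $f(v)=0$ on $\partial\mathcal O$; without loss of generality shift $f(0)$ into a Lipschitz part. By \eqref{f-coe+} the second term is at most $L_1\|v\|_1^2-L_2\||v|^{q/2}\nabla v\|^2$. For the noise, expand $\nabla[g(v)g_m]=g'(v)\nabla v\,g_m+g(v)\nabla g_m$ and use \eqref{cq} and \eqref{g-grow+}. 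This yields
\[
\|G(v)\|_{\mathcal L_2^1}^2\le (1+\epsilon)C_Q\int (L_4+L_5|v|^{q/2})|\nabla v|^2+C_\epsilon C_Q'\|g(v)\|^2 .
\]
Bound the cross term $L_5|v|^{q/2}|\nabla v|^2$ by $L_4$-type and $|v|^q|\nabla v|^2$ terms via Young. This is where the constant $[(p-1)C_QL_5]^2/(8L_2)$ in \eqref{con-uni} comes from. Then $\|g(v)\|^2\le C(1+\|v\|_{L^{q/2+2}_\xi}^{q/2+2})$ is absorbed into the negative $L^{q+2}$ term.

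The negative $L^{q+2}$ term $-\hat c_4\|v\|_{L^{q+2}_\xi}^{q+2}$ itself comes from
\[
\||v|^{q/2}\nabla v\|^2=c_q\|\nabla(|v|^{q/2}v)\|^2\ge c_q\lambda_1\||v|^{q/2+1}\|^2=c_q\lambda_1\|v\|_{L^{q+2}_\xi}^{q+2},
\]
using Poincar\'e \eqref{poin}. A small part of that negative term is reserved for this purpose.

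\textbf{Step 4: growth (H4').} The $\dot H^{-1}$ bound on $D$ follows from \eqref{emb} and $|f(\xi)|\le C(1+|\xi|^{q+1})$. For the $L^2$ bound, use
\[
\|F(v)\|\le C(1+\|v\|_{L^{2(q+1)}_\xi}^{q+1})\le C(1+\|v\|_1^{q+1}),
\]
which matches $\|v\|_V^{\alpha_0-1}$. For the $\mathcal L_2^1$ bound on $G$, use $\int |v|^{q/2}|\nabla v|^2\le \epsilon\|v\|_{L^{q+2}}^{q+2}$-type interpolation. Concretely, $\int|v|^{q/2}|\nabla v|^2\lesssim\|v\|_{L^{q+2}_\xi}^{q/2}\|\nabla v\|_{L^{2(q+2)/(q+4)\cdot\ldots}}^2$; more simply, Gagliardo--Nirenberg plus Young gives a bound by $\|v\|_2^2+\|v\|_{L^{q+2}_\xi}^{q+2}$.

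\textbf{Step 5: conclusions.} With (H0)--(H4') verified, Theorem~\ref{tm-H2} gives existence of the solution in $L^2_{t,\omega}\dot H^2\cap L^{q+2}_{t,\omega,\xi}$ together with \eqref{est-spde}, using $p\ge2(q+1)=2(\alpha_0-1)$. Uniqueness comes from Theorem~\ref{tm-mom}. Path continuity in $\dot H^1$ and the $L^p_\omega\mathcal C_t$ moment follow from Corollary~\ref{cor-ito}, whose hypotheses were checked in the proof of Theorem~\ref{tm-H2}, combined with BDG. Under \eqref{con-uni}, the optimized Young splitting makes $\hat c_3<0$, so \eqref{est-X2+} gives \eqref{est-spde+}.

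\textbf{Main obstacle.} The hardest step is Step 3. The constants must be tracked sharply enough that Young's inequality on $L_5|v|^{q/2}|\nabla v|^2$, balanced against $L_2|v|^q|\nabla v|^2$, reproduces exactly \eqref{con-uni}. At the same time, enough of the negative term has to be left over to generate $-\hat c_4\|v\|_{L^{q+2}_\xi}^{q+2}$.
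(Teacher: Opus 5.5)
Your plan follows the paper's proof. You verify \textup{(H0)}--\textup{(H2)}, \textup{(H3')}, \textup{(H4')} for $D=\Delta+F$ and $G$, using the same $(p-1)C_Q(1+\epsilon)$ bound on $\|G(v)\|_{\mathcal L_2^1}^2$, the same Young splitting of $L_5|v|^{q/2}|\nabla v|^2$ against $L_2|v|^q|\nabla v|^2$, and the same Poincar\'e argument for $|v|^{q/2}v$. You then invoke Theorems~\ref{tm-H2} and \ref{tm-mom}. Two steps, however, are not right as written.

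First, the $\mathcal L_2^1$-growth of $G$ in \textup{(H4')} is only gestured at. Bounding $\int|v|^{q/2}|\nabla v|^2$ by a multiple of $\|v\|_{L_\xi^{q+2}}^{q+2}$ alone is impossible. The Gagliardo--Nirenberg route does close, but only because the interpolation exponent is exactly $1/2$: apply H\"older to get $\|v\|_{L_\xi^{q+2}}^{q/2}\|\nabla v\|_{L^{4(q+2)/(q+4)}}^2$, then use $\|\nabla v\|_{L^{4(q+2)/(q+4)}}^2\lesssim\|v\|_{L_\xi^{q+2}}\|\nabla^2 v\|$, then Young with exponents $(2,2)$, with no room to spare. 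It also needs $\|\nabla^2 v\|\lesssim\|\Delta v\|$, which is not available on general (e.g.\ nonconvex) piecewise smooth domains.

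The paper instead integrates by parts once with $\tilde g(\xi)=\int_0^\xi|g'(\eta)|^2\,\mathrm d\eta$. Since $\tilde g(v)=0$ on $\partial\mathcal O$, one has $\|g'(v)\nabla v\|^2=-\langle\tilde g(v),\Delta v\rangle\le\frac12\|\Delta v\|^2+\frac12\|\tilde g(v)\|^2$. Then \eqref{g-grow+} gives $|\tilde g(\xi)|^2\le C(1+|\xi|^{q+2})$. Only $\|\Delta v\|$ appears, and no interpolation is needed.

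Second, for \eqref{est-spde+} the optimized Young splitting does not by itself give $\hat c_3<0$. Letting $\epsilon\downarrow0$ and $\delta\uparrow2L_2$ only brings $\hat c_3$ arbitrarily close to $2L_1+(p-1)C_QL_4+[(p-1)C_QL_5]^2/(8L_2)$. Condition \eqref{con-uni} places this below $2\lambda_1$, not below $0$.

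You must still spend most of the dissipation through \eqref{poin}: $-2\|\Delta v\|^2\le-\eta\|\Delta v\|^2-(2-\eta)\lambda_1\|v\|_1^2$ with small $\eta>0$. This makes the $\|v\|_1^2$-coefficient negative while keeping $\hat c_2=\eta>0$. It is exactly how $\lambda_1$ enters \eqref{con-uni}.

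With these two repairs the argument goes through. Your handling of the boundary term coming from $f(0)$ and the BDG step for $L^p_\omega\mathcal C_t\dot H^1$ are correct refinements that the paper leaves implicit.
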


\begin{proof}
The first two conditions \textup{(H0)} and \textup{(H1)} are standard.

By Poincar\'e inequality \eqref{poin}, \eqref{cq}, and \eqref{fg-mon}, we obtain 
\begin{align*}
& 2 \langle  D(u)-D(v), u-v \rangle_{V^*, V} + (p_0-1) \|G(u)-G(v)\|_{\mathcal L_2^0}^2
+ 2 \|\nabla (u-v)\|^2 +  \\
& = \int_{\mathcal O} [2 [f(u)-f(v)] (u-v) +  (p_0-1) \sum_{m \in \mathbb N_+} q_m |g_m|^2 |g(u)-g(v)|^2] \,\mathrm{d}\xi   \\
& = \int_{\mathcal O} [2 [f(u)-f(v)] (u-v) +  (2p-1) C_Q |g(u)-g(v)|^2] \,\mathrm{d}\xi 
\le L_0 |u-v|^2,
\end{align*}
which shows \eqref{mon} in \textup{(H2)} with $c_0=L_0-2 \lambda_1$. 
 
To show \eqref{coe+} and \eqref{G-grow+}, let us begin with an estimate of the term $\|g'(v) \nabla v \|^2$. 
Using integration by parts and the growth condition \eqref{g-grow} leads to  
\begin{align*}
\|g'(v) \nabla v \|^2 
& = \int_{\mathcal O} |g'(v)|^2 |\nabla v|^2 d \xi
= - \langle \tilde g(v), \Delta v \rangle
 \le \frac12 \|\Delta v\|^2 + \frac12 \|\tilde g(v)\|^2 \\
& \le C( 1 + \|\Delta v\|^2 + \|v\|_{L_\xi^{q+2}}^{q+2}).
\end{align*} 
The above estimate, in combination with \eqref{cq}, \eqref{g-grow}, and Young inequality, yields \eqref{G-grow+}with $\alpha_0=q+2$:
\begin{align} \label{proof-G-grow}
\|G(v)\|_{\mathcal L_2^1}^2 
 & = \int_{\mathcal O} \sum_{m \in \mathbb N_+} q_m \|[\nabla [g(v) g_m]\|^2 \,\mathrm{d}\xi \nonumber \\
  & \le 2 \int_{\mathcal O} \sum_{m \in \mathbb N_+} q_m \|[g(v)]' g_m\|^2 \,\mathrm{d}\xi + 2 \int_{\mathcal O} \sum_{m \in \mathbb N_+} q_m \|g(v) g_m'\|^2 \,\mathrm{d}\xi \nonumber \\
& \le 2 C_Q' \|g(v)\|^2 + 2 C_Q \|g'(v) \nabla v \|^2
\le C ( 1 + \|\Delta v\|^2 + \|v\|_{L_\xi^{q+2}}^{q+2}).
\end{align}  
In addition, by \eqref{f-grow}, Young inequality, and the embedding \eqref{emb},
\begin{align*} 
\|\Delta v + F (v)\|_{-1} & \le  \|v\|_1 +  C (1 + \|v\|_{L_\xi^{q+2}}^{q+1})
\le C (1 + \|v\|_1 + \|v\|_{L_\xi^{q+2}}^{q+1}), \\
\|\Delta v + F (v)\| & \le \|v\|_2 +  C (1 + \|v\|_{L_\xi^{2(q+1)}}^{q+1})
\le C (1 + \|v\|_2 + \|v\|_1^{q+1}), 
\end{align*}
so \eqref{D-grow+} in \textup{(H4')} holds.

Finally, by \eqref{cq}, \eqref{f-coe+}, and \eqref{g-grow+}, we have  
\begin{align*}
 & 2 \langle  D(v),v \rangle_V +  (p-1) \|G(v)\|_{\mathcal L_2^1}^2 \\ 
 & = -2 \|\Delta v\|^2 + \int_{\mathcal O} [ 2 f'(v) |\nabla v|^2 
  + (p-1) \sum_{m \in \mathbb N_+} q_m \|[\nabla [g(v) g_m]\|^2 ] \,\mathrm{d}\xi   \\
 % & \le -2 \|\Delta v\|^2 + \int_{\mathcal O} ( [2 L_1 -2 L_2 |v|^q+ (p-1)C_Q (1+\epsilon) L_4 (1 + |v|^{q/2})] |\nabla v|^2 + C_Q (p-1)  (1+\epsilon^{-1}) |g(v)|^2 ) \,\mathrm{d}\xi   \\
   & \le -2 \|\Delta v\|^2 + [2 L_1 + (p-1)C_Q (1+\epsilon) L_4 ] \|\nabla v\|^2 
  + C_Q (p-1)  (1+\epsilon^{-1}) \|g(v)\|^2   \\
  &  \quad - \int_{\mathcal O} [2 L_2 |v|^q - (p-1)C_Q (1+\epsilon) L_5 |v|^{q/2}] |\nabla v|^2 \,\mathrm{d}\xi.
    \end{align*}    
Then Young inequality leads to  
    \begin{align*}
 & 2 \langle  D(v),v \rangle_V +  (p-1) \|G(v)\|_{\mathcal L_2^1}^2 \\ 
& \le -2 \|\Delta v\|^2 + [2 L_1 + (p-1)C_Q (1+\epsilon) L_4
+ [(p-1)C_Q (1+\epsilon) L_5]^2 (4 \delta)^{-1} ] \|\nabla v\|^2  \\
  &  \quad - (2 L_2-\delta) \int_{\mathcal O} |v|^q |\nabla v|^2 \,\mathrm{d}\xi 
  + C_{\epsilon_1} + \epsilon_1 \|v\|_{L_\xi^{q+2}}^{q+2}.
    \end{align*} 
Applying the Poincar\'e inequality \eqref{poin} to \(w=|v|^{(q+2)/2}\in H_0^1(\mathcal O)\), we obtain  
\begin{align*}
\|v\|_{L_\xi^{q+2}}^{q+2}
= \| |v|^{(q+2)/2}\|_{L^2}^2
\le \lambda_1\|\nabla(|v|^{(q+2)/2})\|_{L^2}^2
=\frac{ (q+2)^2 \lambda_1}4 \int_{\mathcal O} |v|^q |\nabla v|^2\,d\xi, 
\end{align*}
and derive 
$\int_{\mathcal O} |v|^q |\nabla v|^2\,d\xi
\ge \frac 4{(q+2)^2\lambda_1} \|v\|_{L_\xi^{q+2}}^{q+2}$.
Thus,
    \begin{align*}
 & 2 \langle  D(v),v \rangle_V +  (p-1) \|G(v)\|_{\mathcal L_2^1}^2 \\ 
& \le -2 \|\Delta v\|^2 + [2 L_1 + (p-1)C_Q (1+\epsilon) L_4
+ [(p-1)C_Q (1+\epsilon) L_5]^2 (4 \delta)^{-1} ] \|\nabla v\|^2  \\
  &  \quad   + C_{\epsilon_1} - (4(2 L_2-\delta)[(q+2)^2\lambda_1]^{-1} 
- \epsilon_1 ) \|v\|_{L_\xi^{q+2}}^{q+2},
    \end{align*}     
so \eqref{coe+} in \textup{(H3')} holds with $\hat c_2=2$, $\hat c_3=2 L_1 + (p-1)C_Q (1+\epsilon) L_4 + [(p-1)C_Q (1+\epsilon) L_5]^2 (4 \delta)^{-1} $, and $\hat c_4=4(2 L_2-\delta)[(q+2)^2\lambda_1]^{-1} - \epsilon_1$.   
Here, \(\epsilon,\epsilon_1>0\) may be chosen arbitrarily small, and \(\delta>0\) may be chosen arbitrarily close to \(2L_2\) from below.

Finally, condition \eqref{con-uni}, together with the Poincar\'e inequality \eqref{poin}, ensures the existence of a positive constant \(\hat c_3\) (and another positive constant \(\hat c_4\)). This completes the verification of conditions \textup{(H0)}-\textup{(H2)}, \textup{(H3')}, and \textup{(H4')}. The last statement follows immediately from Theorem \ref{tm-H2}.
\end{proof}

 \begin{remark}
From the proof of Theorem \ref{tm-spde}, for Eq.~\eqref{spde} with coefficients \(f\) and \(g\) satisfying Assumption \ref{ap}, the conditions \textup{(H0)}--\textup{(H2)}, (H3*), and (H4*) hold with
$\alpha_0=q+2$, $c_0=L_0-2 \lambda_1$, $\tilde c_2=-2$, 
and other positive constants \(\tilde c_i\), \(i=1,3,4,5,6\).
Consequently, Eq.~\eqref{spde} possesses a unique solution in
$L_\omega^p \mathcal C_t H \cap L_{t, \omega}^2 \dot H^1 \cap L_{t, \omega, \xi}^{q+2}$, 
provided that \(X_0\in L^p(\Omega;H)\) with \(p\ge 2\), satisfying  
\begin{align*} 
& \sup_{t \ge 0} \mathbb E \|X(t)\|^{p} 
+ \sup_{t \ge 1} \Big\{\frac1t \int_{0}^{t} \mathbb E \|X\|_1^2 + \mathbb E \|X\|_{L_\xi^{q+2}}^{q+2} \,\mathrm{d}t \Big\} \le C ( 1 + \mathbb E \|X_{0}\|_1^{p}). 
\end{align*}
Moreover, the conclusions of Remark \ref{rk-H} remain valid. In this case, one can remove the embeddings \eqref{emb} -- and even \(\dot H^1\hookrightarrow L_\xi^{q+2}\) -- as well as the restriction \(p\ge2(q-1)\).
\end{remark}

\begin{remark}
The arguments in Theorem \ref{tm-spde} can be generalized to handle the gradient noise case: $G(v) g_m(\xi):=\gamma_0 \nabla v(\xi) g_m(\xi) + g(\xi) g_m(\xi)$, $m \in \mathbb N_+$, $\xi \in \mathcal O$, $\gamma_0>0$, with $g$ satisfying Assumption \ref{ap}. 
\end{remark}

\begin{remark}
Generally, \(\nabla X \notin L_{t,\omega,\xi}^{q+2}\), since we have just shown that $X \in L_{t,\omega}^2 \dot H^2 \cap L_t^\infty L_\omega^{q+2} \dot H^1 \cap L_{t,\omega,\xi}^{q+2}$ for $q>0$.
This is the main reason why we cannot derive $\nabla X \in K \cap K_0$ and why we omit the spaces $K_0$ and $K_0^*$. Consequently, additional restrictions such as $\dot H^1 \hookrightarrow L_\xi^{2(q+1)}$ and $p \ge 2(q+1)$ are needed in the proof of Theorem \ref{tm-spde}.
\end{remark}

 \begin{example}  \label{rk-ex} 
Assumption~\ref{ap} includes certain polynomial drifts (with negative leading coefficients) and diffusion functions. Indeed, let \(k\in\mathbb N\) and assume
 \begin{align} \label{ex}
f(\xi)=\sum_{i=0}^{2k+1} a_i \xi^i, \quad
g(\xi)=\sum_{j=0}^k c_j |\xi|^{j/2} \xi+c_0',
\quad \xi \in \mathbb R,
 \end{align} 
 where $a_i, c_j, c_0' \in \mathbb R$, $i=0,1,\cdots,2k+1$, $j=0,1,\cdots,k+1$, with $a_{2k+1}<0$. 
Clearly, \eqref{f-coe+}-\eqref{g-grow+} hold with \(q=2k\), \(L_2=-(2k+1)a_{2k+1}-\epsilon>0\) for an arbitrarily small \(\epsilon>0\), and certain constants \(L_1,L_3,L_4,L_5\). Moreover, elementary algebraic calculations yield \eqref{fg-mon} for a certain constant \(L_0\). 

These conditions are analogous to the finite-D conditions given in \cite{LWWZ25, LW26}. The only difference is the growth order of \(g\): it is \(1+q/4\) in the present setting, whereas in the finite-D case it can be \(1+q/2\) \cite[Assumption 3]{LW26}.
In particular, this includes the double-well potential drift \(f(\xi)=\xi-\xi^3\), \(\xi\in\mathbb R\), corresponding to SACE driven by trace-class noise satisfying \eqref{cq}, and the diffusion \(g(\xi)=\gamma_0|\xi|^{1/2}\xi\), where \(\gamma_0>0\) denotes the intensity parameter. In this case, we have \(L_1=1\), \(L_2=3\), \(L_4=0\), and \(L_5=9\gamma_0^2/4\), so the uniformity condition \eqref{con-uni} reduces to $27 C_Q^2 (p-1)^2 \gamma_0^4 < 256 (\lambda_1-1)$.
Similar arguments apply to the reaction-diffusion drift \(f(\xi)=-|\xi|^q \xi\) and diffusion \(g(\xi)=\gamma_0|\xi|^{q/4}\xi\) for the same range of \(q>0\) satisfying \eqref{emb}.

In the Lipschitz diffusion case, let \(L_g\) denote the Lipschitz constant of \(g\). Then one can choose \(L_5=0\) and \(L_4=L_g^2\) s.t. \eqref{con-uni} reduces to $C_Q (p-1) L_g^2 < 2 (\lambda_1- L_1)$. 
This is equivalent to the condition \(L_0<2\lambda_1\), as proposed in \cite{Liu26}.
\end{example}

\subsection{Regularity of SRDE}
\label{sec3.2}

In this part, we lift the regularity of the solution to Eq.~\eqref{spde} from \(\dot H^1\) to \(\dot H^\gamma\) for any \(\gamma \in (0, 1)\) and certain temporal H\"older regularity.  
We note that, for the Lipschitz diffusion case, such Sobolev and H\"older regularities were shown in \cite[Proposition 3.1, Theorem 3.3 and Corollary 3.2]{LQ21}, \cite[Lemma 4.2]{LS26}, \cite[Proposition 2.1]{Liu26}, and \cite[Lemma 3.1]{CL26}.

We will need the following well-known ultracontractivity and smoothing properties of the analytic \(\mathcal C_0\)-semigroup \(S\):
\begin{align}
\|S(t) u\|_\mu \le C e^{-ct} t^{-\frac{\mu-\nu}2} \|u\|_\nu,   
& \quad \forall~ t > 0, ~ 0 \le \nu \le \mu \le 2,~ u \in \dot H^\nu, \label{ana} \\
\|(S(t)-{\rm Id}_H) u\| \le C t^{\frac\rho2} \|u\|_\rho, \label{ana-hol} 
& \quad \forall~ t > 0, ~ 0 \le \rho \le 2, ~ u \in \dot H^\rho.
\end{align}

\begin{theorem} \label{tm-reg+}
Let $p\ge2$, $X_0 \in L_\omega^{p(q+1)} \dot H^1$, and Assumption \ref{ap} hold.
There exists a positive constant $C$ s.t. 
\begin{align}   \label{spde-hol}
\|X\|_{L_t^\infty L_\omega^p \dot H^1}
+ \|X\|_{\mathcal C_t^{1/2} L_\omega^p L_\xi^2}
& \le C e^{C T} (1+\|X_0\|^{q+1}_{L_\omega^{p(q+1)} \dot H^1}). 
\end{align}  
Assume furthermore that $X_0 \in L_\omega^p \dot H^{1+\gamma}$ with $\gamma \in (0, 1)$, then $X \in L_t^\infty L_\omega^p \dot H^{1+\gamma}$ s.t. 
\begin{align}  
\|X\|_{\mathcal C_t^{1/2} L_\omega^p \dot H^\gamma} 
+ \|X\|_{L_t^\infty L_\omega^p \dot H^{1+\gamma}}
& \le C e^{C T} (1+\|X_0\|_{L_\omega^{p (q+1)} \dot H^1}^{q+1}
+ \|X_0\|_{L_\omega^p \dot H^{1+\gamma}}), \quad d=1, \label{spde-1} \\
 \|X\|_{L_t^\infty L_\omega^p \dot H^{1+\gamma}}  
& \le C e^{C T} (1+\|X_0\|_{L_\omega^{p (q+1)} \dot H^1}^{q+1}
+ \|X_0\|_{L_\omega^p \dot H^{1+\gamma}}), \quad d=2,  \label{spde-2} \\ 
\|X\|_{L_t^\infty L_\omega^p \dot H^{1+\gamma}}  
& \le C e^{C T} (1+\|X_0\|_{L_\omega^{p_3} \dot H^1}^{q+1}
+ \|X_0\|_{L_\omega^p \dot H^{1+\gamma}}), \quad d=3,   \label{spde-3}
\end{align}   
provided $X_0 \in L_\omega^{p_3} \dot H^1 \cap L_\omega^p \dot H^{1+\gamma}$ (in 3-D case), where $p_3:=p (q+1)$ when $p \le 14/(q+1)$ and  
$p_3:=2p (q+8)/(16-pq)$ when $14/(q+1)<p < 16/q$. 
Moreover, under the condition \eqref{con-uni}, the estimates \eqref{spde-hol} and \eqref{spde-1} are uniform in time.

\end{theorem}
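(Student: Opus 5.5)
The plan is to work with the mild formulation
\[
X(t)=S(t)X_0+\int_0^t S(t-r)F(X(r))\,\mathrm{d}r+\int_0^t S(t-r)G(X(r))\,\mathrm{d}W(r),
\]
which is equivalent to the variational solution from Theorem \ref{tm-spde}. The $L_t^\infty L_\omega^p\dot H^1$ bound is then already available from \eqref{est-spde}, or from Theorem \ref{tm-mom}/\eqref{est-X+} under \eqref{con-uni}. The main task is to convert these moment bounds into bounds on $F(X)$ and $G(X)$ in suitable norms.

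\textbf{Nonlinearities.} By \eqref{f-grow} and the embedding $\dot H^1\hookrightarrow L_\xi^{2(q+1)}$ in \eqref{emb},
\[
\|F(X)\|\le C(1+\|X\|_1^{q+1}),
\]
so $\|F(X)\|_{L_t^\infty L_\omega^p L_\xi^2}\le C(1+\|X\|^{q+1}_{L_t^\infty L_\omega^{p(q+1)}\dot H^1})$. This explains the moment $X_0\in L_\omega^{p(q+1)}\dot H^1$ and the power $q+1$ in \eqref{spde-hol}. For the diffusion, \eqref{g-grow} gives $\|G(X)\|_{\mathcal L_2^0}^2\le C_Q\hat L_5(1+\|X\|_{L_\xi^{q/2+2}}^{q/2+2})$, which is controlled by $\|X\|_1^{q/2+2}$. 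For $\|G(X)\|_{\mathcal L_2^1}$ we use \eqref{proof-G-grow}, where $g'(X)\nabla X$ is bounded via \eqref{g-grow+} by $(1+|X|^{q/4})|\nabla X|$. Hölder and Sobolev embedding give a bound by $\|X\|_1$-moments in $d=1$ (where $\dot H^1\subset L^\infty$). In $d=2,3$ the bound instead needs the $L^2_t\dot H^2$ information.

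\textbf{Temporal Hölder estimate in $L_\xi^2$.} Write $X(t)-X(s)=(S(t-s)-\mathrm{Id})X(s)+\int_s^t S(t-r)F\,\mathrm{d}r+\int_s^t S(t-r)G\,\mathrm{d}W$. By \eqref{ana-hol} with $\rho=1$, the first term is $O(|t-s|^{1/2})\|X(s)\|_1$. The drift integral is $O(|t-s|)$ by the bound on $\|F(X)\|$. By BDG, the stochastic integral is $O(|t-s|^{1/2})\sup\|G(X)\|_{L^p_\omega\mathcal L_2^0}$. This gives \eqref{spde-hol}.

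\textbf{Spatial regularity $\dot H^{1+\gamma}$.} Apply $(-\Delta)^{(1+\gamma)/2}$ to the mild formula. The initial term is bounded by $\|X_0\|_{1+\gamma}$. For the drift term, \eqref{ana} with $\mu-\nu=1+\gamma<2$ gives an integrable singularity $(t-r)^{-(1+\gamma)/2}e^{-c(t-r)}$ against $\|F(X)\|$. For the stochastic convolution, BDG reduces it to $\int_0^t\|S(t-r)G(X)\|^2_{HS(U_0,\dot H^{1+\gamma})}\,\mathrm{d}r\le\int_0^t(t-r)^{-\gamma}e^{-c(t-r)}\|G(X)\|^2_{\mathcal L_2^1}\,\mathrm{d}r$, which is finite since $\gamma<1$. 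The exponential factors $e^{-c(t-r)}$ make all bounds uniform in $T$ whenever the input moments are uniform, which holds under \eqref{con-uni}. The Hölder part of \eqref{spde-1} follows by repeating the Hölder argument above with $\dot H^\gamma$ norms, using \eqref{ana-hol} with $\rho=1$ on $(-\Delta)^{\gamma/2}X(s)\in\dot H^1$.

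\textbf{Main obstacle.} The hardest step is the $L^p_\omega$ control of $\|G(X)\|_{\mathcal L_2^1}$, i.e. of $\||X|^{q/4}\nabla X\|$, in $d=2,3$. There I would use Gagliardo--Nirenberg, $\||X|^{q/4}\nabla X\|\le\|X\|_{L^{a}}^{q/4}\|\nabla X\|_{L^b}$ with $\|\nabla X\|_{L^b}\lesssim\|X\|_1^{1-\theta}\|X\|_{1+\gamma}^{\theta}$. The factor $\|X\|_{1+\gamma}^{\theta}$ with $\theta<1$ is then absorbed by Young's inequality into the left-hand side of a Gronwall-type inequality for $\sup_t\mathbb E\|X\|_{1+\gamma}^p$. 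In $d=2$ this closes with the same $p(q+1)$ moments. In $d=3$, balancing the exponents requires the restriction $pq<16$ and yields the modified moment $p_3=2p(q+8)/(16-pq)$ once $p>14/(q+1)$. I would first try this interpolation with $b=3$ in 3-D and tune the exponents to match $p_3$.
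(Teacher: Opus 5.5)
\textbf{Where you agree with the paper.} Your treatment of the mild formulation, the initial and drift terms, the case $d=1$, and both temporal H\"older estimates is essentially the paper's argument. In $d=1$ you use $\|G(v)\|_{\mathcal L_2^1}^2\le C(1+\|v\|_1^{q/2+2})$, which holds because $\dot H^1\hookrightarrow L_\xi^\infty$.

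\textbf{The gap in $d=2,3$.} There you plan to bound $\||X|^{q/4}\nabla X\|$ by $\|X\|_1^{A}\|X\|_{1+\gamma}^{B}$. You then absorb the last factor via Young and a Gronwall-type inequality for $\phi(t):=\|X(t)\|_{L_\omega^p\dot H^{1+\gamma}}$. This fails for three reasons.
\begin{enumerate}
\item As written it is circular. Absorbing $\sup_t\phi$, or applying a singular Gronwall lemma, presupposes $\sup_{t\le T}\phi(t)<\infty$, which is what you are proving. You would at least have to run the argument on spectral Galerkin approximations and pass to the limit. In $d=2$, where scaling is critical and $B$ can be taken arbitrarily small, this is the only problem.
\item In $d=3$ the exponent $B$ cannot go below $q/(8\gamma)$. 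For $X=\lambda^{1/2}\varphi(\lambda\,\cdot)$, $\|X\|_1$ stays fixed while $\|X\|_{1+\gamma}\sim\lambda^{\gamma}$ and $\||X|^{q/4}\nabla X\|\sim\lambda^{q/8}$. So absorption is impossible for every $\gamma\le q/8$, which means up to $3/8$ when $q=3$. Your concrete choice $b=3$, i.e.\ $\|\nabla X\|_{L_\xi^3}\lesssim\|X\|_{3/2}$, even needs $\gamma>1/2$.
\item Nothing in your scheme produces $pq<16$ or $p_3$. Young's inequality would instead require $\dot H^1$-moments of order $p(q/4+1-B)/(1-B)$, which blow up as $\gamma\downarrow q/8$. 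So ``tuning the exponents to match $p_3$'' is not a derivation.
\end{enumerate}

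\textbf{The missing idea.} Use the regularity $X\in L_{t,\omega}^2\dot H^2$ already given by Theorem \ref{tm-spde}; then no a priori $\dot H^{1+\gamma}$ bound and no absorption are needed. The paper proceeds as follows.
\begin{enumerate}
\item It writes $\|G(v)\|_{\mathcal L_2^1}^2\le C(1+\|v\|_1^{q/2+2}+\|v\|_{L_\xi^\infty}^{q/2}\|v\|_1^2)$.
\item It interpolates $\|v\|_{L_\xi^\infty}\le C\|v\|_2^{\beta}\|v\|_1^{1-\beta}$, with $\beta=1/2$ for $d=3$ and any $\beta\in(0,1)$ for $d=2$.
\item It applies H\"older in $r$ against the BDG kernel $(t-r)^{-\gamma}$, with conjugate exponent $\alpha_1'<1/\gamma$.
\item It applies H\"older in $(t,\omega)$ so that $\|X\|_2^{\beta q/4}$ is measured in $L_{t,\omega}^2\dot H^2$.
\end{enumerate}
Only second moments of $\|X\|_2$ are available, so one needs $p\alpha_2\beta q/4=2$ with $\alpha_2>1$. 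For $\beta=1/2$ this is exactly $pq<16$. The conjugate $\alpha_2'=16/(16-pq)$ then puts $\|X\|_1$ in $L_\omega^{p_3}$ with $p_3=p\alpha_2'(q+8)/8=2p(q+8)/(16-pq)$. Comparing $p_3$ with $p(q+1)$ gives the split at $p=14/(q+1)$.

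Incidentally, the paper's time-H\"older step requires $\alpha_1\alpha_3\beta q/4\le 1$ with $\alpha_3\ge1$ and $\alpha_1>1/(1-\gamma)$, i.e.\ $\gamma<1-q/8$ in $d=3$. So your absorption idea, made rigorous via Galerkin, could complement the paper for $\gamma>q/8$. It would do so with moment requirements different from the stated $p_3$.
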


\begin{proof} 
By Young inequality, the conditions \eqref{f-grow} and \eqref{g-grow}, we get
\begin{align*}
& \int_0^T \mathbb E\|X\| \,\mathrm{d}t
+ \int_0^T \mathbb E \|F(X)\| \,\mathrm{d}t 
+\int_0^T \mathbb E \|G(X)\|^2_{\mathcal L_2^0} \,\mathrm{d}t \\  
& \le \|X\|_{L_{t, \omega}^1 L_\xi^2}
+ C (T + \|X\|^{q+1}_{L_{t, \omega}^{q+1} L_\xi^{2(q+1)}} )
+ C (T + \|X\|^2_{L_{t, \omega}^2 \dot H^2} + \|u\|_{L_\xi^{q+2}}^{q+2}),
\end{align*}
which is finite by the regularity $X \in L_\omega^p \mathcal C_t H \cap L_{t, \omega}^2 \dot H^1 \cap L_{t, \omega, \xi}^{q+2}$ with $p \ge 2(q+1)$ and the embedding \eqref{emb}.
By \cite[Remark G.0.6 and Proposition G.0.5(i)]{LR15} and the proof of \cite[Lemma 2.3]{LQ21}, the variational solution of Eq. \eqref{spde} is also a mild solution with $S(t):=e^{\Delta t}$ for all $t \in [0, T]$: 
\begin{align} \label{mild}
X(t) &=S(t) X_0+\int_0^t S(t-r) F(X(r)) \,\mathrm{d}r
+\int_0^t S(t-r) G(X(r)) \,\mathrm{d}W(r).
\end{align}    

Let $\gamma \in (0, 1)$.
For the first two terms in \eqref{mild}, by the regularity \eqref{est-spde}, we have   
\begin{align}
& \|S(t) X_0\|_{L_t^\infty L_\omega^p \dot H^{1+\gamma}}
+ \Big\| \int_0^t S(t-r) F(X(r)) \,\mathrm{d}r \Big\|_{L_t^\infty L_\omega^p \dot H^{1+\gamma}} \nonumber  \\ 
& \le C \|X_0\|_{L_\omega^p \dot H^{1+\gamma}} 
+ C \|F(X)\|_{L_t^\infty L_\omega^p L_\xi^2}
 \Big( \int_0^\infty r^{-\frac{1+\gamma}2} e^{- cr}   \,\mathrm{d}r \Big) \nonumber \\
 & \le C (1 + \|X_0\|_{L_\omega^p \dot H^{1+\gamma}} + \|X\|^{q+1}_{L_t^\infty L_\omega^{p (q+1)}\dot H^1}) \nonumber \\
 & \le C e^{CT} (1+ \|X_0\|_{L_\omega^p \dot H^{1+\gamma}} 
 + \|X_0\|^{q+1}_{L_\omega^{p (q+1)}\dot H^1} ). \label{F1+}
\end{align}    

By \eqref{ana} and BDG inequality, we have 
\begin{align} \label{G0}
& \Big\| \int_0^t S(t-r) G(X(r)) \,\mathrm{d}W(r) \Big\|^2_{L_t^\infty L_\omega^p \dot H^{1+\gamma}} \nonumber  \\
& \le C \sup_{0 \le t \le T} \int_0^t (t-r)^{-\gamma} e^{-2c (t-r)} \|G(X(r))\|_{L_\omega^p \mathcal L_2^1}^2  \,\mathrm{d}r.
\end{align} 
From the proof of \eqref{proof-G-grow} and the embeding \eqref{emb}, for any $v \in \dot H^1 \cap L_\xi^\infty$,  
 \begin{align} \label{G21}
\|G(v)\|^2_{\mathcal L_2^1}
& \le C(\|g'(v) \nabla v \|^2 + \|g(v)\|^2)
\le C (1 + \|v\|_1^{q/2+2} + \|v\|_{L_\xi^\infty}^{q/2} \|v \|_1^2).
\end{align}
When $d=1$, as $\|\cdot\|_{L_\xi^\infty} \le C \|\cdot\|_1$, we have 
$\|G(\cdot)\|^2_{\mathcal L_2^1}
\le C (1 + \|\cdot\|_1^{q/2+2})$ s.t.
\begin{align} \label{G1+1}
& \Big\| \int_0^t S(t-r) G(X(r)) \,\mathrm{d}W(r) \Big\|_{L_t^\infty L_\omega^p \dot H^{1+\gamma}} \nonumber  \\ 
& \le C \|G(X)\|_{L_t^\infty L_\omega^p \mathcal L_2^1} ( \int_0^\infty r^{-\gamma} e^{-2c r} \,\mathrm{d}r )^{1/2}\nonumber \\
& \le C (1 + \|X\|^{q/4+1}_{L_t^\infty L_\omega^{p(q/4+1)} \dot H^1})
\le e^{CT} (1 + \|X\|^{q/4+1}_{L_\omega^{p(q/4+1)} \dot H^1}).
\end{align}   
From \eqref{G21} and the estimate $\|\cdot\|_{L_\xi^\infty} \le C \|\cdot\|_2^\beta \|\cdot\|_1^{1-\beta}$ for $\beta=1/2$ when $d=3$ and any $\beta \in (0, 1)$ when $d=2$, it is clear that  
 \begin{align} \label{G2}
\|G(v)\|^2_{\mathcal L_2^1}
& \le C (1 + \|v\|_1^{q/2+2} + \|v\|_2^{\beta q/2} \|v\|_1^{(1-\beta) q/2+2}), 
 \quad v \in \dot H^2.
\end{align}   
By \eqref{G0} and H\"older (or Young) inequality, for any $\alpha_i >1$ with $\alpha_i':=\alpha_i/(\alpha_i-1)$, $i=1,2,3$, s.t. $\alpha_1' \gamma<1$,
\begin{align} \label{G2+}
& \Big\| \int_0^t S(t-r) G(X(r)) \,\mathrm{d}W(r) \Big\|_{L_t^\infty L_\omega^p \dot H^{1+\gamma}}\\
% & \le C \sup_{0 \le t \le T} ( \int_0^t r^{-\alpha \gamma} e^{-c r} \,\mathrm{d}r)^{1/(2 \alpha)} (\int_0^t \|G(X(r))\|_{L_\omega^p \mathcal L_2^1}^{2 \alpha'}  \,\mathrm{d}r )^{1/(2 \alpha')} \nonumber \\
& \le C \Big( \int_0^\infty r^{-\alpha_1' \gamma} e^{-2c \alpha_1' r} \,\mathrm{d}r\Big)^{1/(2 \alpha_1')} 
\|G(X)\|_{L_t^{2 \alpha_1} L_\omega^p \mathcal L_2^1} \nonumber   \\
& \le C \|1 + \|X\|_2^{\beta q/4} \|v\|_1^{[(1-\beta) q+4]/4}
 + \|X\|_1^{q/4+1}\|_{L_t^{2 \alpha_1} L_\omega^p} \nonumber  \\
% & \le C (1 +  \|v\|^{q/4+1}_{L_t^{\alpha_1 (q+4)/2} L_\omega^{p q/4+1} \dot H^1}\nonumber  \\
% & \qquad + \|v\|^{\beta q/4}_{L_t^{2 \alpha_1 \alpha_3 \beta q/4} L_\omega^{p \alpha_2 \beta q/4} \dot H^2} \|v\|_{L_t^{2 \alpha_1 \alpha_3' [(1-\beta) q+4]/4} L_\omega^{p \alpha_2' [(1-\beta) q+4]/4} \dot H^1}^{[(1-\beta) q+4]/4} )\nonumber  \\
& \le C (1 +  \|X\|^{q/4+1}_{L_t^\infty L_\omega^{p q/4+1} \dot H^1}
+ \|X\|^{\beta q/4}_{L_t^{2 \alpha_1 \alpha_3 \beta q/4} L_\omega^{p \alpha_2 \beta q/4} \dot H^2} \|X\|_{L_t^\infty L_\omega^{p \alpha_2' [(1-\beta) q+4]/4} \dot H^1}^{[(1-\beta) q+4]/4} ).  \nonumber
\end{align}    
Now we choose $2 \alpha_1 \alpha_3 \beta q/4 \le 2$ and $p \alpha_2 \beta q/4 \le 2$ s.t.
\begin{align*}
& \Big\| \int_0^t S(t-r) G(X(r)) \,\mathrm{d}W(r) \Big\|_{L_t^\infty L_\omega^p \dot H^{1+\gamma}} \nonumber  \\
& \le C (1 +  \|X\|^{q/4+1}_{L_t^\infty L_\omega^{p q/4+1} \dot H^1}
+ \|X\|^{\beta q/4}_{L_{t, \omega}^2 \dot H^2} \|X\|_{L_t^\infty L_\omega^{p \alpha_2' [(1-\beta) q+4]/4} \dot H^1}^{[(1-\beta) q+4]/4} ).  
\end{align*}   

To ensure $\alpha_1' \gamma<1$ and $2 \alpha_1 \alpha_3 \beta q/4=p \alpha_2 \beta q/4=2$ when $d=2$, we choose $\alpha_1'=\gamma^{-1}-$ s.t. $\alpha_1 = (1-\gamma)^{-1}+$, $\alpha_3=4(1-\gamma)/(\beta q)-$, and $\alpha_2=8/(\beta p q)$ s.t. $\alpha_2'=8/(8-\beta p q)$ with $\beta p q<8$.
We take $\beta \in (0, \max\{6[p(q+1)-2]^{-1}, 1\})$ s.t. $p (q+1) \ge 2 p[(1-\beta) q+4]/(8-\beta pq)$ and $\beta p q<8$, and thus   
\begin{align} \label{G1+2}
 \Big\| \int_0^t S(t-r) G(X(r)) \,\mathrm{d}W(r) \Big\|_{L_t^\infty L_\omega^p \dot H^{1+\gamma}}  
& \le C e^{C T} (1 + \|X_0\|_{L_\omega^{p(q+1)} \dot H^1}^{q/4+1}).  
\end{align}    

When $d=3$, we take $\beta=1/2$ and choose $\alpha_1'=\gamma^{-1}-$ s.t. $\alpha_1 = (1-\gamma)^{-1}+$, $\alpha_3=8(1-\gamma)/q -$, and $\alpha_2=16/(pq)$ s.t. $\alpha_2'=16/(16-pq)$ with $p q<16$.
If $p \le 14/(q+1)$, then $p (q+1) \ge 2 p[(1-\beta) q+4]/(8-\beta pq)$; if 
$14/(q+1)<p < 16/q$, then $p (q+1) < 2 p[(1-\beta) q+4]/(8-\beta pq)$.
Consequently, 
\begin{align} \label{G1+3}
&\Big \| \int_0^t S(t-r) G(X(r)) \,\mathrm{d}W(r) \Big\|_{L_t^\infty L_\omega^p \dot H^{1+\gamma}}  \nonumber \\
& \le 
\begin{cases}
C e^{C T} (1 + \|X_0\|_{L_\omega^{p(q+1)} \dot H^1}^{q/4+1}), & \quad p \le 14/(q+1), \\
C e^{C T} (1 + \|X_0\|_{L_\omega^{2p (q+8)/(16-pq)} \dot H^1}^{q/4+1}), & \quad 14/(q+1)<p < 16/q.
\end{cases}
\end{align}   
Then we conclude the Sobolev estimates in \eqref{spde-1} (for the second term), \eqref{spde-2}, and \eqref{spde-3} from \eqref{F1+}, \eqref{G1+1}, \eqref{G1+2}, and \eqref{G1+3}.
 
To derive the temporal H\"older regularity, without loss of generality, let $0\le s<t\le T$.
By Minkovskii inequality, we have for each $\gamma \in [0, 1)$, 
\begin{align*}
\|X(t)-X(s)\|_{L_\omega^p \dot H^\gamma} 
&\le \Big\|[S(t)-{\rm Id}] X(s)\Big\|_{L_\omega^p \dot H^\gamma}
+ \Big\|\int_s^t S(t-r) F(X(r)) \,\mathrm{d}r \Big\|_{L_\omega^p \dot H^\gamma} \\& 
\quad + \Big\|\int_s^t S(t-r) G(X(r)) \,\mathrm{d}W(r) \Big\|_{L_\omega^p \dot H^\gamma}
=: I+II+III.
\end{align*} 
The smoothing property \eqref{ana-hol} implies that 
\begin{align} \label{I}
I & \le C \|X\|_{L_t^\infty L_\omega^p \dot H^{1+\gamma}}
 (t-s)^{1/2}.
\end{align} 
For the second term $II$, by the ultracontractive property \eqref{ana}, the embedding \eqref{emb}, and the estimate \eqref{est-spde} we have
\begin{align} \label{II}
II &\le C \|F(X)\|_{L_t^\infty L_\omega^p L_x^2} (t-s)^{1-\gamma/2}
\le C (1+\|X\|^{q+1}_{L_t^\infty L_\omega^{p(q+1)} \dot H^1}) (t-s)^{1-\gamma/2}.
\end{align}

For the last term $III$, let us first consider the 1-D case. 
By BDG inequality, the smoothing property \eqref{ana}, the condition \eqref{G21}, and the estimate \eqref{est-spde}, we get 
\begin{align} \label{III}
III \le C \|G(X)\|_{L_t^\infty L_\omega^p \mathcal L_2^1}
(t-s)^{1/2}
\le C  (1+\|X\|^{q/4+1}_{L_t^\infty L_\omega^{p(q/4+1)} \dot H^1}) (t-s)^{1/2}.
\end{align}
Similarly, for $\gamma=0$, then the condition \eqref{g-grow} yields 
\begin{align} \label{III-}
III \le C \|G(X)\|_{L_t^\infty L_\omega^p \mathcal L_2^0}
(t-s)^{1/2}
\le C  (1+\|X\|^{q/4+1}_{L_t^\infty L_\omega^{p(q/4+1)} \dot H^1}) (t-s)^{1/2}.
\end{align} 
Then we conclude \eqref{spde-hol} and \eqref{spde-1} by combining the estimates \eqref{I}, \eqref{II}, \eqref{III}, \eqref{III-}, and \eqref{est-spde}.
\end{proof}

\begin{remark} 
In the 2-D and 3-D cases, using BDG inequality, condition \eqref{G2}, and estimates \eqref{G1+1}, \eqref{G1+2}-\eqref{G1+3}, we have 
$III  \le \|G(X)\|_{L_t^\infty L_\omega^p \mathcal L_2^1} (t-s)^{1/2}$.
However, according to \eqref{G2}, the term $\|G(X)\|_{L_t^\infty L_\omega^p \mathcal L_2^1}$ is not expected to be finite.
\end{remark}

\section{Strong convergence rates of tamed-FEM}
\label{sec4}

In this section, we construct a family of tamed-FEMs for Eq. \eqref{spde} with superlinear coefficients, and establish both their long-time unconditional stability and their optimal strong convergence rates.

\subsection{Tamed-FEM and Long-time Stability}
\label{sec4.1}

Let \(h\in(0,1)\), \(\mathcal T_h\) be a regular family of quasi-uniform partitions of \(\mathcal O\) with maximal mesh size \(h\), and \(V_h\subset \dot H^1\) be the space of continuous functions on \(\bar{\mathcal O}\) which are piecewise linear over \(\mathcal T_h\) and vanish on \(\partial\mathcal O\). Let $A_h: V_h \rightarrow V_h$ and $\mathcal P_h: \dot H^{-1} \rightarrow V_h$ be the discrete Dirichlet Laplacian and the generalized orthogonal projection operators, resp., defined by
\begin{align*}  
\langle A_h u^h, v^h\rangle & =-\langle \nabla u^h, \nabla v^h\rangle,
\quad u^h, v^h\in V_h,  \\
\langle \mathcal P_h u, v^h\rangle & = \langle u, v^h\rangle_{V^*, V},
\quad u\in \dot H^{-1},\ v^h\in V_h. 
\end{align*}

Denote by \(\tau\in(0,1)\) the temporal step-size.
The tamed-FEM is to find a \(V_h\)-valued (time-homogeneous) Markov chain $(Y_n^h, \mathcal F_{t_n})$ s.t. 
 \begin{align} \label{t-fem}  
  Y_n^h & =Y_{n-1}^h + A_h Y_n^h \tau
  + \mathcal P_h F_\tau (Y_{n-1}^h) \tau 
  + \mathcal P_h G_\tau(Y_{n-1}^h)\delta_{n-1} W, 
  \quad n \in \mathbb N_+,
\end{align} 
for some tamed operators $F_\tau$ and $G_\tau$ (corresponding to certain tamed functions $f_\tau$ and $g_\tau$, resp.), where $\delta_{n-1} W:=W(t_n) -W(t_{n-1})$, $t_n=n \tau$, $0 \le n \le N$, with initial datum $Y^h_0=\mathcal P_h X_0$.  
The tamed-FEM \eqref{t-fem} is linearly implicit, so it can be uniquely solved pathwise. 
It can be equivalently rewritten as  
\begin{align}\label{full} 
Y_n^h & = S_{h,\tau} Y_{n-1}^h+\tau S_{h,\tau} \mathcal P_h F_\tau(Y_{n-1}^h)
+ S_{h,\tau} \mathcal P_h G_\tau(Y_{n-1}^h) \delta_{n-1} W, 
\quad n \in \mathbb N_+, 
\end{align}
where $S_{h,\tau}:=({\rm Id}-\tau A_h)^{-1}$, with ${\rm Id}$ denoting the identity operator in $V_h$, is an approximation of $S$ in one step.
Iterating \eqref{full} yields
\begin{align} \label{full-sum}
Y^h_n 
=S_{h,\tau}^n  Y^h_0+\tau \sum_{i=0}^{n-1}  S_{h,\tau}^{n-i} \mathcal P_h F_\tau(Y^h_i)
+\sum_{i=0}^{n-1}  S_{h,\tau}^{n-i} \mathcal P_h G_\tau(Y^h_i) \delta_i W,
\quad n \in \mathbb N_+.
\end{align}

We begin with the following Lyapunov estimate for general \(p\ge 2\), which is stronger than the one given in \cite{LS26} for \(p=2\) in the Lipschitz diffusion case.

\begin{theorem} \label{tm-lya}
Let $p \ge 2$, $X_0 \in L_\omega^p H$, Assumption \ref{ap} hold, and assume that there exist constants $\tau_0 \in (0, 1]$, $N_0 \in \mathbb R$, and $N_0^*>0$ s.t. for all $\tau \in (0, \tau_0)$ and $u \in \dot H^1$,
\begin{align} \label{coe-tau}
2 \langle F_\tau(u), u \rangle_{V^*, V}
+ \tau \|F_\tau(u)\|^2
 + (p-1) \|G_\tau (u)\|_{\mathcal L_2^0}^2
 \le N_0 - N_0^* \|u\|^2.
\end{align}
Then there exist positive constants $T_1$, $T_2$, and $\tau_1 \in (0, 1]$ s.t. for any $\tau \in (0, \tau_1)$, 
\begin{align} \label{lya}
E_{n-1} \|Y^h_n\|^p  
\le T_1 \tau + (1 - T_2 \tau) \|Y_{n-1}^h\|^p, \quad n \in \mathbb N_+.
 	\end{align}    
\end{theorem}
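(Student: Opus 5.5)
Testing the scheme \eqref{t-fem} against $Y_n^h$ and using the discrete energy identity is the natural starting point. Since $2\langle a-b,a\rangle=\|a\|^2-\|b\|^2+\|a-b\|^2$, writing $a=Y_n^h$, $b=Y_{n-1}^h$ and using $\langle A_hY_n^h,Y_n^h\rangle=-\|\nabla Y_n^h\|^2\le-\lambda_1\|Y_n^h\|^2$ (Poincar\'e \eqref{poin}) gives
\begin{align*}
\|Y_n^h\|^2-\|Y_{n-1}^h\|^2+\|Y_n^h-Y_{n-1}^h\|^2+2\tau\|\nabla Y_n^h\|^2
=2\tau\langle \mathcal P_hF_\tau(Y_{n-1}^h),Y_n^h\rangle+2\langle \mathcal P_hG_\tau(Y_{n-1}^h)\delta_{n-1}W,Y_n^h\rangle.
\end{align*}
Set $\Phi:=\tau\mathcal P_hF_\tau(Y_{n-1}^h)+\mathcal P_hG_\tau(Y_{n-1}^h)\delta_{n-1}W$. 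I would split $Y_n^h=Y_{n-1}^h+(Y_n^h-Y_{n-1}^h)$ on the right and absorb the cross term $2\langle\Phi,Y_n^h-Y_{n-1}^h\rangle\le\|Y_n^h-Y_{n-1}^h\|^2+\|\Phi\|^2$ into the left. This leaves
\[
\|Y_n^h\|^2\le\|Y_{n-1}^h\|^2+2\tau\langle F_\tau(Y_{n-1}^h),Y_{n-1}^h\rangle+\|\Phi\|^2+2\langle \mathcal P_hG_\tau\delta_{n-1}W,Y_{n-1}^h\rangle,
\]
where $\mathcal P_h$ is dropped against $Y_{n-1}^h\in V_h$ and $\|\mathcal P_h\|\le1$ in $H$. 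The diffusion dissipation $-2\tau\lambda_1\|Y_n^h\|^2$ is kept as a bonus or simply discarded. After expanding $\|\Phi\|^2$, the martingale-type terms have zero $E_{n-1}$-mean, and $E_{n-1}\|\mathcal P_hG_\tau\delta W\|^2\le\tau\|G_\tau\|_{\mathcal L_2^0}^2$. For $p=2$ this already yields $E_{n-1}\|Y_n^h\|^2\le\|Y_{n-1}^h\|^2+\tau[2\langle F_\tau,u\rangle+\tau\|F_\tau\|^2+\|G_\tau\|^2_{\mathcal L_2^0}]$, and \eqref{coe-tau} gives \eqref{lya}.

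For general $p$ I would write $\|Y_n^h\|^2=\|Y_{n-1}^h\|^2(1+\zeta)$, where
\[
\zeta:=\frac{2\tau\langle F_\tau,u\rangle+\|\Phi\|^2+2\langle G_\tau\delta W,u\rangle}{\|u\|^2},\qquad u=Y_{n-1}^h,
\]
with the case $u=0$ handled separately. Then I would use the elementary inequality $(1+\zeta)^{p/2}\le1+\frac p2\zeta+\frac{p(p-2)}8\zeta^2+C_p\sum_{3\le k\le \lceil p/2\rceil+1}|\zeta|^k$, valid for $\zeta\ge-1$ (the finite-dimensional argument of \cite{LWWZ25}), and take $E_{n-1}$. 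The quadratic term is dominated by $E_{n-1}\langle G_\tau\delta W,u\rangle^2/\|u\|^4\le\tau\|G_\tau\|^2_{\mathcal L_2^0}/\|u\|^2$. With coefficient $\frac{p(p-2)}8\cdot4$, it combines with $\frac p2\tau\|G_\tau\|^2$ to give exactly $\frac p2\tau(p-1)\|G_\tau\|^2_{\mathcal L_2^0}\|u\|^{-2}$. Hence the leading part is $\frac p2\tau\|u\|^{p-2}[2\langle F_\tau,u\rangle+\tau\|F_\tau\|^2+(p-1)\|G_\tau\|^2]\le\frac p2\tau\|u\|^{p-2}(N_0-N_0^*\|u\|^2)$ by \eqref{coe-tau}. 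Young's inequality turns this into $C\tau-(\frac p2N_0^*-\epsilon)\tau\|u\|^p$.

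The main obstacle is controlling the remaining terms, namely the cross terms of order $\tau^{3/2}$ and higher, and the higher powers of $\zeta$. These involve $\tau^2\|F_\tau\|^2$, $\tau\langle F_\tau,u\rangle\langle G_\tau\delta W,u\rangle$, moments of $\|G_\tau\delta W\|^{2k}$ (bounded via Gaussian moments by $C\tau^k\|G_\tau\|^{2k}$), and so on. Each such term must be shown to be $\le C\tau^{1+\epsilon}(1+\|u\|^p)$, which is absorbable for $\tau<\tau_1$. The key input is that \eqref{coe-tau} itself gives a priori control: $\tau\|F_\tau(u)\|^2+(p-1)\|G_\tau(u)\|^2\le N_0-N_0^*\|u\|^2-2\langle F_\tau,u\rangle\le N_0+2\|F_\tau\|\|u\|$. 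This implies $\tau\|F_\tau(u)\|^2\le C(1+\|u\|^2)$, so $\tau^{1/2}\|F_\tau(u)\|\le C(1+\|u\|)$ and $\|G_\tau(u)\|^2\le C(1+\tau^{-1/2}\|u\|)$-type bounds. I would first try to use these bounds to show each higher-order term is $O(\tau^{3/2})(1+\|u\|^p)$. Terms with an odd power of $\delta W$ vanish under $E_{n-1}$; this removes the worst $\tau^{1/2}$ contributions. If the crude bound for $G_\tau$ is too weak, I would refine it by using $-2\langle F_\tau,u\rangle\le N_0^*$-compatible absorption, or else impose the tamed-growth bounds implicit in \eqref{coe-tau}. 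Finally, choosing $\tau_1$ small so that these remainders are at most $\frac{p}{4}N_0^*\tau\|u\|^p+C\tau$ yields \eqref{lya} with $T_2=\frac p4N_0^*$ (reduced as needed).
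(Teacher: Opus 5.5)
\textbf{There is a genuine gap in the general-$p$ step.} Your opening step is exactly the paper's \eqref{lya-err}: testing with $Y_n^h$ and absorbing the cross term gives $\|Y_n^h\|^2\le\|Y_{n-1}^h+\Phi\|^2$. Your $p=2$ argument is also correct. The problem is the expansion for general $p$. Expanding $(1+\zeta)^{p/2}$ only helps if everything beyond $\tfrac p2\mathbb E_{n-1}\zeta$ and the noise part of $\tfrac{p(p-2)}8\mathbb E_{n-1}\zeta^2$ is $O(\tau^{1+\epsilon})$ relative to $\|u\|^2$. The hypothesis \eqref{coe-tau} gives no such smallness.

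Your a priori bounds do not follow from \eqref{coe-tau}. From $\tau x^2\le N_0+2x\|u\|$ with $x=\|F_\tau(u)\|$, one only gets
\[
\tau^2\|F_\tau(u)\|^2\le 2|N_0|\tau+4\|u\|^2,\qquad (p-1)\tau\|G_\tau(u)\|_{\mathcal L_2^0}^2\le |N_0|\tau+\|u\|^2 .
\]
So $\tau\mathcal P_hF_\tau(u)$ and $\tau^{1/2}G_\tau(u)$ can be as large as $u$ itself.

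A concrete example: take $g_\tau\equiv0$ and $f_\tau(\xi)=-\xi/\tau$. This satisfies \eqref{coe-tau} with $N_0=0$, $N_0^*=1$ for all $\tau\in(0,1]$. Yet $\tau\|F_\tau(u)\|^2=\|u\|^2/\tau$, and the drift part of your $\zeta$ equals $-1$. Then the term $(2\tau\langle F_\tau(u),u\rangle+\tau^2\|F_\tau(u)\|^2)^2/\|u\|^4$ inside $\zeta^2$, and the remainders $C_p|\zeta|^k$, are of order one. They cannot be absorbed into $-\tfrac p2N_0^*\tau$.

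Two further points:
\begin{itemize}
\item Because your remainder is written with $|\zeta|^k$ in absolute value, the cancellation of odd Gaussian moments that you invoke is lost.
\item Your fallback of imposing growth bounds on $F_\tau,G_\tau$ changes the hypotheses. Such bounds are not implied by \eqref{coe-tau}. Even \eqref{fgtau-grow} is a pointwise polynomial bound, so it does not control $\|F_\tau(u)\|$ by $\|u\|$.
\end{itemize}

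\textbf{The missing idea is a non-perturbative moment bound for the Gaussian increment, which is how the paper proceeds.} Let $a:=Y_{n-1}^h+\tau\mathcal P_hF_\tau(Y_{n-1}^h)$, which is $\mathcal F_{t_{n-1}}$-measurable. For $t\in[0,\tau]$ set $I_{n-1}(t):=a+\mathcal P_hG_\tau(Y_{n-1}^h)[W(t_{n-1}+t)-W(t_{n-1})]$.

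Apply It\^o's formula \eqref{ito} to $\|I_{n-1}(t)\|^p$ and take $\mathbb E_{n-1}$. Use the bound $\|[\mathcal P_hG_\tau]^*I_{n-1}\|_{U_0}\le\|G_\tau\|_{\mathcal L_2^0}\|I_{n-1}\|$ and H\"older's inequality. This gives $\frac{\mathrm d}{\mathrm dt}(\mathbb E_{n-1}\|I_{n-1}(t)\|^p)^{2/p}\le(p-1)\|G_\tau(Y_{n-1}^h)\|_{\mathcal L_2^0}^2$. Integrating over $[0,\tau]$ yields
\[
\mathbb E_{n-1}\|Y_{n-1}^h+\Phi\|^p\le\bigl(\|a\|^2+(p-1)\tau\|G_\tau(Y_{n-1}^h)\|_{\mathcal L_2^0}^2\bigr)^{p/2}\le\bigl((1-N_0^*\tau)\|Y_{n-1}^h\|^2+N_0\tau\bigr)^{p/2}.
\]
The last step expands $\|a\|^2$, using $\|\mathcal P_hF_\tau\|\le\|F_\tau\|$, and applies \eqref{coe-tau}. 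The convexity inequality \eqref{in-ep} then gives \eqref{lya}.

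This argument does not expand the drift at all, and it treats the noise with the sharp constant $p-1$. That is exactly why \eqref{coe-tau} is stated with $\tau\|F_\tau\|^2$ and $(p-1)\|G_\tau\|_{\mathcal L_2^0}^2$. In effect, it is the exact, remainder-free version of the ``leading part'' you correctly identified.
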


 \begin{proof}
 Let $n \in \mathbb N_+$. 
Testing with $Y_n^h$ on both sides of the tamed-FEM \eqref{t-fem} and using the elementary equality
\begin{align}\label{ab}
	2\langle a-b,a\rangle =|a|^2-|b|^2+|a-b|^2, 
\end{align}
for any $a,b$ in an arbitrary Hilbert space,
and Cauchy--Schwarz inequality, we obtain  
\begin{align*}
& \|Y^h_n\|^2-\|Y^h_{n-1}\|^2 
+  \|Y^h_n-Y^h_{n-1}\|^2 +2 \tau \|\nabla Y^h_n\|^2 \nonumber  \\
&=2 \langle Y^h_n-Y^h_{n-1}, 
\tau F_\tau(Y^h_{n-1}) + G_\tau (Y^h_{n-1}) \delta_{n-1} W\rangle
 \nonumber  \\
& \quad + 2 \langle Y^h_{n-1}, \tau F_\tau(Y^h_{n-1}) + G_\tau (Y^h_{n-1}) \delta_{n-1} W\rangle \nonumber \\
& \le \|\tau \mathcal P_h F_\tau(Y^h_{n-1}) + \mathcal P_h G_\tau (Y^h_{n-1}) \delta_{n-1} W\|^2 + \|Y^h_n-Y^h_{n-1}\|^2 \nonumber \\
& \quad + 2 \langle Y^h_{n-1}, \tau F_\tau(Y^h_{n-1}) + G_\tau (Y^h_{n-1}) \delta_{n-1} W\rangle.
\end{align*}  
This shows 
\begin{align} \label{lya-err}
& \|Y^h_n\|^2+2 \tau \|\nabla Y^h_n\|^2
\le \|Y^h_{n-1} + \tau \mathcal P_h F_\tau(Y^h_{n-1}) + \mathcal P_h G_\tau (Y^h_{n-1}) \delta_{n-1} W \|^2.
\end{align}   
Denote by $T_{n-1}:=Y^h_{n-1} + \tau \mathcal P_h F_\tau(Y^h_{n-1}) + \mathcal P_h G_\tau (Y^h_{n-1}) \delta_{n-1} W$ and set
$I_{n-1}(t):=Y_{n-1}^h + \tau \mathcal P_h F_\tau(Y^h_{n-1}) 
+ \mathcal P_h G_\tau (Y^h_{n-1}) [W(t_{n-1}+t)-W(t_{n-1})]$ for $0\le t\le \tau$.
Then $I_{n-1}(0)=Y^h_{n-1} + \tau \mathcal P_h F_\tau(Y^h_{n-1}) $ and $I_{n-1}(\tau)=R_{n-1}$.  
Applying the It\^o formula \eqref{ito} to $\|I_{n-1}(t)\|^p$ (and using a stopping time argument if necessary), followed by taking $\mathbb E_{n-1}$, and using the estimate  
$\|[\mathcal P_h G_\tau (Y^h_{n-1})]^* I_{n-1}(t)\|_{U_0}
 \le \|G_\tau (Y^h_{n-1})\|_{\mathcal L_2^0} \|I_{n-1}(t)\|$,
and H\"older inequality, we infer
\begin{align*}
 \frac{\,\mathrm{d}}{\,\mathrm{d} t}\mathbb E_{n-1}\|I_{n-1}(t)\|^p 
&=\frac p2\mathbb E_{n-1} [\|I_{n-1}(t)\|^{p-2}\|\mathcal P_h G_\tau (Y^h_{n-1})\|_{\mathcal L_2^0}^2 ] \\
& \quad + \frac12 p(p-2) \mathbb E_{n-1} [\|I_{n-1}(t)\|^{p-4}\|[\mathcal P_h G_\tau (Y^h_{n-1})]^*I_{n-1}(t)\|_{U_0}^2 ]\\
&\le \frac12 p(p-1) \|G_\tau (Y^h_{n-1})\|_{\mathcal L_2^0}^2 \mathbb E_{n-1}\|I_{n-1}(t)\|^{p-2}.
\end{align*} 
By H\"older inequality, we have
$\mathbb E_{n-1}\|I_{n-1}(t)\|^{p-2} \le (\mathbb E_{n-1}\|I_{n-1}(t)\|^p)^{1-2/p}$.
Substituting this into the above estimate yields
$$\frac{\,\mathrm{d}}{\,\mathrm{d} t} (\mathbb E_{n-1} \|I_{n-1}(t)\|^p)^{2/p} \le (p-1)\|G_\tau (Y^h_{n-1})\|_{\mathcal L_2^0}^2.$$
Integrating over $[0,\tau]$ and using the condition \eqref{coe-tau} yield
\begin{align*}
\mathbb E_{n-1} \|T_{n-1}\|^p 
& \le (\|Y^h_{n-1} + \tau \mathcal P_h F_\tau(Y^h_{n-1})\|^2 
+ (p-1) \tau \|G_\tau (Y^h_{n-1})\|_{\mathcal L_2^0}^2 )^{p/2} \nonumber \\
& \le (\|Y_{n-1}^h\|^2 + \tau [ 2 \langle F_\tau(Y^h_{n-1}), Y_{n-1}^h \rangle_{V^*, V} \\
& \quad + \tau \|F_\tau(Y^h_{n-1})\|^2
 + (p-1) \|G_\tau (Y^h_{n-1})\|_{\mathcal L_2^0}^2]  )^{p/2} \\
 & \le [(1 - N_0^* \tau)\|Y_{n-1}^h\|^2 + N_0 \tau)^{p/2}.
\end{align*}
Using the elementary inequality  
\begin{align} \label{in-ep}
(a+ \tau b)^\alpha \le (1+\epsilon \tau) a^\alpha + C_{p, \epsilon} \tau b^\alpha,
\end{align} 
for any $a, b \ge 0$, $\alpha \ge 1$, and $\epsilon \in (0, 1)$ and some $C_{\alpha, \epsilon}>0$, we obtain 
\begin{align*}
\mathbb E_{n-1} \|T_{n-1}\|^p 
& \le (1+\epsilon \tau) (1 - N_0^* \tau)^{p/2} \|Y_{n-1}^h\|^p + C_{p, \epsilon} |N_0|^{p/2} \tau.
\end{align*}
This estimate, in combination with \eqref{lya-err}, yields 
\begin{align*}
E_{n-1} \|Y^h_n\|^p 
& \le E_{n-1} [\|Y^h_n\|^2+2 \tau \|\nabla Y^h_n\|^2]^{p/2} \\
& \le (1+\epsilon \tau) (1 - N_0^* \tau)^{p/2} \|Y_{n-1}^h\|^p + C_{p, \epsilon} |N_0|^{p/2} \tau.
\end{align*} 
This shows \eqref{lya} with $T_1=C_{p, \epsilon} |N_0|^{p/2}$ and certain $T_2>0$.
\end{proof}

Next, let us show that the tamed functions constructed in \cite{LS26} satisfy the coercivity condition \eqref{coe-tau} in the superlinear diffusion case.

\begin{lemma} \label{lm-ex-tau}
Let $f$ and $g$ satisfy Assumption \ref{ap}.
Then  
\begin{align} \label{fg-tau}
 f_\tau (\xi):=\frac{f(\xi)}{(1+\tau|\xi|^{2q})^{1/2}}, \quad 
 g_\tau(\xi):=\frac{g(\xi)}{(1+\sqrt{\tau}|\xi|^q)^{1/2}}, \quad 
\xi \in \mathbb R,
\end{align} 
satisfy the tamed coercivity condition \eqref{coe-tau}.
\end{lemma}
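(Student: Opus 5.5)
The condition \eqref{coe-tau} involves only $H$-norms and Nemytskii operators acting pointwise. I will therefore reduce it to a scalar inequality: for all $\xi\in\mathbb R$ and $\tau\in(0,\tau_0)$,
\begin{align*}
2\xi f_\tau(\xi)+\tau|f_\tau(\xi)|^2+(p-1)C_Q|g_\tau(\xi)|^2\le N_0-N_0^*|\xi|^2 .
\end{align*}
Integrating this over $\mathcal O$ gives \eqref{coe-tau}, because $\|G_\tau(u)\|_{\mathcal L_2^0}^2=\int_{\mathcal O}\sum_m q_m|g_m|^2|g_\tau(u)|^2\,\mathrm d\xi\le C_Q\|g_\tau(u)\|^2$ by \eqref{cq}. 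The bounded domain also turns the constant $N_0$ into $N_0|\mathcal O|$. Throughout I write $\rho:=(1+\tau|\xi|^{2q})^{1/2}\ge1$ and $\sigma:=(1+\sqrt\tau|\xi|^q)^{1/2}\ge1$.

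The scalar ingredients are as follows. The coercivity \eqref{f-coe} gives $\xi f(\xi)\le L_1'-L_2'|\xi|^{q+2}$. The growth \eqref{f-grow} gives $|f(\xi)|\le C(1+|\xi|^{q+1})$. Assumption \eqref{g-grow} gives $|g(\xi)|^2\le \hat L_5(1+|\xi|^{q/2+2})$.

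First, the drift term. Since $\rho\ge1$, we have $2\xi f_\tau\le 2L_1'/\rho-2L_2'|\xi|^{q+2}/\rho\le 2L_1'^{+}-2L_2'|\xi|^{q+2}/\rho$. For the taming term,
\[
\tau|f_\tau|^2=\tau f^2/\rho^2\le C\tau(1+|\xi|^{2q+2})/\rho^2 .
\]
Using $\tau|\xi|^{2q}\le\rho^2$ and $\rho\ge1$, this is at most $C\tau+C|\xi|^2/\rho\cdot\tau^{1/2}|\xi|^{q}$. That bound is not yet in a usable form, so I sharpen it. We have $\tau|\xi|^{2q+2}/\rho^2=|\xi|^{2}\cdot\tau|\xi|^{2q}/\rho^2$. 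Also $\tau|\xi|^{2q}/\rho^2\le \sqrt\tau|\xi|^q/\rho$, because $\sqrt\tau|\xi|^q\le\rho$. Hence the taming term is at most $C\tau+C\sqrt\tau|\xi|^{q+2}/\rho$. For $\tau<\tau_0$ small this is absorbed by half of $2L_2'|\xi|^{q+2}/\rho$, giving a drift contribution of at most $C-L_2'|\xi|^{q+2}/\rho$.

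Second, the diffusion term. We have $(p-1)C_Q|g_\tau|^2\le C(1+|\xi|^{q/2+2})/\sigma^2\le C+C|\xi|^{q/2+2}/\sigma^2$. The key comparison is between $\sigma^2=1+\sqrt\tau|\xi|^q$ and $\rho=(1+\tau|\xi|^{2q})^{1/2}\le 1+\sqrt\tau|\xi|^q=\sigma^2$. So $1/\sigma^2\le1/\rho$, and the diffusion contribution is at most $C+C|\xi|^{q/2+2}/\rho$. By Young's inequality, $C|\xi|^{q/2+2}\le \tfrac{L_2'}{2}|\xi|^{q+2}+C'|\xi|^{2}$. Dividing by $\rho\ge1$ and combining with the drift bound, we get
\[
\text{total}\le C-\tfrac{L_2'}{2}\frac{|\xi|^{q+2}}{\rho}+C'\frac{|\xi|^2}{\rho}.
\]

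The main obstacle is then to extract the lower-order coercivity $-N_0^*|\xi|^2$ uniformly in $\tau$, since $|\xi|^{q+2}/\rho$ behaves like $|\xi|^{2}\tau^{-1/2}$ for large $|\xi|$ and its coefficient blows up rather than degenerating. I plan to split into two regions. On $\{|\xi|\le R\}$, all terms are bounded by a constant depending on $R$. On $\{|\xi|\ge R\}$, I use $|\xi|^{q+2}/\rho\ge |\xi|^2\cdot |\xi|^q/(1+\sqrt\tau|\xi|^q)\ge |\xi|^2\,R^q/(1+R^q)$ for $\tau<1$; this holds because $x\mapsto x/(1+\sqrt\tau x)$ is increasing and $\sqrt\tau\le1$. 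Choosing $R$ large enough that $\tfrac{L_2'}{2}R^q/(1+R^q)$ dominates the constant $C'$ plus some $N_0^*>0$, the total is at most $N_0-N_0^*|\xi|^2$ on both regions, after adding $N_0^*R^2$ to the constant on the bounded region. This settles \eqref{coe-tau} with $\tau_0$ fixed by the absorption step above.
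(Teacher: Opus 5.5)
Your reduction to the scalar inequality is correct, and so are the absorption of $\tau|f_\tau|^2$ via $\tau|\xi|^{2q}/\rho^2\le\sqrt\tau|\xi|^q/\rho$ and the comparison $\rho\le\sigma^2$. Together they validly give
\[
2\xi f_\tau+\tau|f_\tau|^2+(p-1)C_Q|g_\tau|^2\le C-\frac{L_2'}{2}\,\frac{|\xi|^{q+2}}{\rho}+C'\,\frac{|\xi|^2}{\rho}.
\]
The final step does not work as written. On $\{|\xi|\ge R\}$ you bound $|\xi|^{q+2}/\rho$ below by $\frac{R^q}{1+R^q}|\xi|^2$, but you bound $C'|\xi|^2/\rho$ only by $C'|\xi|^2$. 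You then choose $R$ so that $\frac{L_2'}{2}\frac{R^q}{1+R^q}>C'+N_0^*$. Since $\frac{R^q}{1+R^q}<1$ for every $R$, enlarging $R$ can never achieve this unless $C'<L_2'/2$ already holds. But $C'$ is fixed by your Young step: requiring $C|\xi|^{q/2+2}\le\frac{L_2'}{2}|\xi|^{q+2}+C'|\xi|^2$ forces $C'\ge C^2/(2L_2')$ with $C=(p-1)C_Q\hat L_5$. As written, the argument therefore only closes under an extra smallness condition such as $(p-1)C_Q\hat L_5<L_2'$, which Assumption \ref{ap} does not provide.

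The repair is to keep the common denominator $\rho$. For $|\xi|\ge R$ one has $C'|\xi|^2/\rho\le C'R^{-q}|\xi|^{q+2}/\rho$. Taking $R\ge1$ with $R^q\ge 4C'/L_2'$ then gives
\[
2\xi f_\tau+\tau|f_\tau|^2+(p-1)C_Q|g_\tau|^2\le C-\frac{L_2'}{4}\,\frac{|\xi|^{q+2}}{\rho}\le C-\frac{L_2'}{4}\,\frac{R^q}{1+R^q}\,|\xi|^2\le C-\frac{L_2'}{8}\,|\xi|^2 .
\]
On $\{|\xi|\le R\}$ the left side is at most $C+C'R^2$, so $N_0^*=L_2'/8$ works uniformly in $\tau$. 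Equivalently, you can skip the Young step and use $|\xi|^{q/2+2}\le R^{-q/2}|\xi|^{q+2}$ on $\{|\xi|\ge R\}$.

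With this fix your proof is complete, and it is a genuinely different and more careful route than the paper's. The paper bounds the drift part by $M_0-M_1\tau^{-1/2}|\xi|^2$ and the diffusion part by $C_\epsilon+\epsilon M_1\tau^{-1/2}|\xi|^2$, then replaces $\tau^{-1/2}$ by $\tau_0^{-1/2}$. The first of these bounds cannot hold uniformly in $\tau$ at moderate $|\xi|$: for $f(\xi)=\xi-\xi^3$ at $|\xi|=1$ the left side is $0$, while $M_0-M_1\tau^{-1/2}\to-\infty$ as $\tau\to0$. Your weight $|\xi|^{q+2}/\rho$, which interpolates between $|\xi|^{q+2}$ and $\tau^{-1/2}|\xi|^2$, is the right quantity to carry.
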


\begin{proof} 
With \(s:=|\xi|\),
\[
2\xi f_\tau(\xi)+\tau |f_\tau(\xi)|^2
\le
\frac{2(A_1+A_2s^2-A_3s^{q+2})}{\sqrt{1+\tau s^{2q}}}
+
\frac{\tau A_4^2(1+s^{q+1})^2}{1+\tau s^{2q}}.
\]
For \(s\) large, the first term behaves like
$ - 2A_3 \tau^{-1/2} s^2$, 
while the second term is bounded by \(C(1+s^2)\). Hence, there exist positive constants \(M_0, M_1\), and \(\tau_1 \in(0,1]\) s.t. 
\begin{equation}\label{eq:tamed-drift}
2\xi f_\tau(\xi)+\tau |f_\tau(\xi)|^2
\le M_0 - M_1 \tau^{-1/2} |\xi|^2, \quad \xi\in\mathbb R, ~\tau \in (0, \tau_1).
\end{equation}
Indeed, for \(|\xi|\le R\) the left-hand side is uniformly bounded in \(\tau\), and for \(|\xi|\ge R\) the negative term dominates by choosing \(\tau_1\) sufficiently small.

For large \(s=|\xi|\), 
\[
|g_\tau(\xi)|^2
=
\frac{|g(\xi)|^2}{1+\sqrt{\tau}|\xi|^q}
\le
C\,\frac{1+ s^{2+q/2}}{1+\sqrt{\tau} s^q}, 
\]
behaves like
$C\,\tau^{-1/2} s^{2-q/2}$. 
Since the exponent \(2-q/2<2\), as \(q>0\), for every \(\epsilon>0\) there exists \(C_\epsilon>0\) independent of \(\tau\) s.t.
\begin{equation}\label{eq:tamed-diffusion}
(p-1)C_Q |g_\tau(\xi)|^2
\le
C_\epsilon
+
\epsilon M_1  \tau^{-1/2} |\xi|^2.
\end{equation} 
 
Adding \eqref{eq:tamed-drift} and \eqref{eq:tamed-diffusion}, and choosing \(\epsilon\le 1/2\), we obtain \eqref{coe-tau} with \(N_0=M_0+C_\epsilon\) and $N_0^* := \frac{M_1}{2\sqrt{\tau_0}}>0$. 
\end{proof}

\subsection{Assumptions on Tamed-FEM}
\label{sec4.2}

To derive strong error estimates for the tamed-FEM \eqref{t-fem} applied to Eq.~\eqref{see}, we impose the following conditions on the tamed drift and diffusion functions, and then provide concrete examples that satisfy both these conditions and Assumption~\ref{ap}.

\begin{assumption}\label{ap-tau} 
There exist a positive constant $C$ s.t. for any $\xi, \eta \in \mathbb R$,  
\begin{align}
|f_\tau(\xi)| + |g_\tau(\xi)|
& \le C (1+ |\xi|^{q+1}), \label{fgtau-grow} \\ 
|f(\xi)-f_\tau(\xi)| + |g(\xi)-g_\tau(\xi)|
& \le C \tau^{1/2} (1+ |\xi|^{2q+1}), \label{f-ftau} \\
|f_\tau(\xi)-f_\tau(\eta)| + |g_\tau(\xi)-g_\tau(\eta)| 
& \le C(1+ |\xi|^q + |\eta|^q) |\xi-\eta|. \label{ftau-}  
\end{align}  
\end{assumption}

\begin{assumption}\label{ap-tau'} 
For any $p^*>2$, there exist constants $\tau_2 \in (0, 1]$ and $M_0 \in \mathbb R$ s.t.  
\begin{align}  \label{fgtau'}
2 f_\tau'(\xi) + \tau |f_\tau'(\xi)|^2 + C_Q (p^*&  -1) |g_\tau'(\xi)|^2 
 \le M_0, \quad \xi \in \mathbb R, ~\tau \in (0, \tau_2). 
\end{align}  
\end{assumption}

\begin{remark}  \label{rk-fgtau'}
In the linear growth diffusion case (without taming the diffusion function), \eqref{fgtau'} is ensured by the following uniform upper bound condition for \(f_\tau'\), namely
$f_\tau'\le M_1$ for some \(M_1\in\mathbb R\), first proposed in \cite{LS26}:
\begin{align} \label{ftau'} 
[1+\tau |\xi|^{2q}]f'(\xi) - q \tau |\xi|^{2(q-1)} \xi f(\xi) 
\le M_1 (1+\tau|\xi|^{2q})^{3/2}, \quad \xi \in \mathbb R. 
\end{align} 
This condition encompasses all odd-degree polynomials with negative leading coefficients.
In addition, for \eqref{ftau'} and \(g\) with Lipschitz constant \(L_g\), \cite[Lemma 3.2]{CL26} proved that $2 f_\tau'(\xi) + \tau |f_\tau'(\xi)|^2 \le 2M_1$ s.t. \eqref{fgtau'} holds with $M_0=2M_1+C_Q (p^*-1) L_g^2$.
\end{remark}

\begin{lemma} \label{lm-ftau} 
Under Assumption \ref{ap-tau}, there exists a positive constant $C$ s.t. 
\begin{align}   
\|F(u)-F_\tau(u)\|_{-1} 
& \le C \tau^{1/2} (1+\|u\|_1^{2q+1}),   \label{F-Ftau}  \\ 
\|F_\tau(u)-F_\tau(v)\|_{-1}
& \le C (1+\|u\|^q_1+\|v\|^q_1 ) \|u-v\|,  \label{Ftau-} \\  
\|G(u)-G_\tau(u)\|_{\mathcal L_2^{-1}}
& \le C \tau^{1/2} ( 1+ \|u\|_1^{2q+1}), \label{G-Gtau} \\ 
\|G_\tau(z)-G_\tau(w)\|_{\mathcal L_2^0} 
& \le C (1+\|z\|_{L_\xi^\infty}^q+\|w\|_{L_\xi^\infty}^q) \|z-w\|^2, \label{Gtau-} 
\end{align}    
for any $u, v \in \dot H^1$ and $z, w \in L_\xi^\infty$.
Under Assumption \ref{ap-tau'},  
\begin{align}     \label{FGtau-}  
&  2 \langle F_\tau(u)-F_\tau(v), u-v \rangle_{V^*, V} + \tau \|F_\tau(u)-F_\tau(v)\|^2 \nonumber \\
&  \quad + (p^*-1) \|G_\tau(u)) -G_\tau(v)\|_{\mathcal L_2^0}^2 
 \le M_0 \|u-v\|^2.   
\end{align}    
\end{lemma}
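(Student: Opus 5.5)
The plan is to reduce each estimate in Lemma \ref{lm-ftau} to the corresponding pointwise bound in Assumption \ref{ap-tau} or \ref{ap-tau'}. We then integrate over $\mathcal O$ and control the Lebesgue norms that appear by the embeddings \eqref{emb}, together with duality $L_\xi^{r^*}\hookrightarrow\dot H^{-1}$ whenever $\dot H^1\hookrightarrow L_\xi^r$.

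\emph{Growth estimates \eqref{F-Ftau} and \eqref{G-Gtau}.} For \eqref{F-Ftau} we test against $\phi\in\dot H^1$ with $\|\phi\|_1\le1$ and apply \eqref{f-ftau}:
\[
|\langle F(u)-F_\tau(u),\phi\rangle|\le C\tau^{1/2}\int_{\mathcal O}(1+|u|^{2q+1})|\phi|\,\mathrm d\xi .
\]
We bound the right-hand side by Hölder with exponents chosen so that $|u|^{2q+1}$ is measured in $L_\xi^{(2q+1)r^*}$ and $\phi$ in $L_\xi^{r}$. In $d=1$ this is immediate from $\dot H^1\hookrightarrow L_\xi^\infty$. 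In $d=2$ every $L_\xi^r$ with $r<\infty$ is available. In $d=3$ we have $r\le6$, so we need $(2q+1)\cdot 6/5\le 6$, i.e.\ $q\le2$. For the remaining range of $q$ we fall back on the weaker norm $\|u\|_{L_\xi^{\infty}}$ or on the $\dot H^1$-norm of higher powers, and we expect this to be the main obstacle. If the exponent bookkeeping fails for large $q$ in 3-D, we will accept a statement in terms of a stronger norm or restrict $q$.

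The diffusion estimate \eqref{G-Gtau} is analogous. We use
\[
\|G(u)-G_\tau(u)\|_{\mathcal L_2^{-1}}^2=\sum_m q_m\|(g(u)-g_\tau(u))g_m\|_{-1}^2\le C_Q\|g(u)-g_\tau(u)\|_{-1}^2 ,
\]
which follows from \eqref{cq} and the duality argument, since multiplication by bounded $g_m$ acts on the $L_\xi^{r^*}$ representative.

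\emph{Lipschitz-type estimates \eqref{Ftau-} and \eqref{Gtau-}.} For \eqref{Ftau-} we use \eqref{ftau-} and Hölder in the form
\[
\int(1+|u|^q+|v|^q)|u-v||\phi|\le (1+\|u\|_{L_\xi^{2(q+1)}}^q+\|v\|_{L_\xi^{2(q+1)}}^q)\|u-v\|\,\|\phi\|_{L_\xi^{2(q+1)}} .
\]
The exponents match because $\tfrac{q}{2(q+1)}+\tfrac12+\tfrac1{2(q+1)}=1$, and \eqref{emb} then gives the bound exactly as in \eqref{F-}. For \eqref{Gtau-} we compute pointwise
\[
\|G_\tau(z)-G_\tau(w)\|_{\mathcal L_2^0}^2\le C_Q\int|g_\tau(z)-g_\tau(w)|^2 ,
\]
bound the integrand via \eqref{ftau-}, and pull out $L_\xi^\infty$ norms. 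This yields the estimate with $\|z-w\|$; we read the stated square as applying to the squared norm.

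\emph{Monotonicity-type estimate \eqref{FGtau-}.} We write
\[
f_\tau(u)-f_\tau(v)=\theta_f\,(u-v),\qquad g_\tau(u)-g_\tau(v)=\theta_g\,(u-v),
\]
where $\theta_f=\int_0^1 f_\tau'(v+s(u-v))\,\mathrm ds$ and $\theta_g$ is defined the same way. Jensen's inequality gives $\theta_f^2\le\int_0^1|f_\tau'|^2\,\mathrm ds$ and $\theta_g^2\le\int_0^1|g_\tau'|^2\,\mathrm ds$. Since the left-hand side of \eqref{FGtau-} is
\[
\int_{\mathcal O}\Bigl[2\theta_f+\tau\theta_f^2+(p^*-1)\sum_m q_m g_m^2\,\theta_g^2\Bigr]|u-v|^2\,\mathrm d\xi ,
\]
it is bounded by
\[
\int_{\mathcal O}\int_0^1\bigl[2f_\tau'+\tau|f_\tau'|^2+C_Q(p^*-1)|g_\tau'|^2\bigr]\,\mathrm ds\,|u-v|^2\,\mathrm d\xi .
\]
Assumption \ref{ap-tau'} bounds the bracket by $M_0$, which gives \eqref{FGtau-}. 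We first verify the identity for $u,v\in\dot H^1\subset L_\xi^{q+2}$ so that all pairings are genuine integrals, and this step is routine.
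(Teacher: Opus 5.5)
Your proposal is correct and is essentially the paper's proof. It uses the pointwise bounds from Assumptions \ref{ap-tau}--\ref{ap-tau'}, H\"older with exponent $2(q+1)$ together with the dual embedding $L_\xi^{2(q+1)^*}\hookrightarrow\dot H^{-1}$ from \eqref{emb}, and the mean-value/Cauchy--Schwarz representation for \eqref{FGtau-}. Your write-up keeps the factor $2$ in front of $\theta_f$ and treats $\sum_m q_m g_m^2\le C_Q$ as an inequality, both of which the paper's display states loosely. You are also right that the square in \eqref{Gtau-} is a typo.

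There are two small corrections.

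\begin{itemize}
\item For \eqref{F-Ftau}, just take $r=2(q+1)$ in every dimension, as you already do for \eqref{Ftau-} and as the paper does. Your worry about $d=3$, $q\in(2,3]$ is exactly the range where \eqref{emb} itself fails. That is a defect of the paper's standing assumption and affects \eqref{Ftau-} equally, and an $L_\xi^\infty$ fallback is unavailable in 3-D anyway.
\item In \eqref{G-Gtau}, the displayed bound by $C_Q\|g(u)-g_\tau(u)\|_{-1}^2$ does not follow from $g_m\in L_\xi^\infty$, since multiplication by an $L^\infty$ function is not bounded on $\dot H^{-1}$. As your parenthetical intends, bound $\|(g(u)-g_\tau(u))g_m\|_{-1}\le C\|g_m\|_{L_\xi^\infty}\|g(u)-g_\tau(u)\|_{L_\xi^{2(q+1)^*}}$ instead, and then apply \eqref{f-ftau}.
\end{itemize}
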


\begin{proof} 
By the condition \eqref{f-ftau} and the embedding \eqref{emb}, we get \eqref{F-Ftau}:
\begin{align*} 
\|F(u)-F_\tau(u)\|_{-1}
& \le C \tau^{1/2} \|1+ |u|^{2q+1}\|_{L_\xi^{2(q+1)/(2q+1)}}
\le C \tau^{1/2} ( 1+ \|u\|_1^{2q+1}),
\end{align*} 
Similarly, the condition \eqref{ftau-} and the embedding \eqref{emb} yield  \eqref{Ftau-}: 
\begin{align*} 
\|F_\tau(u)-F_\tau(v)\|_{-1}
& \le C \|F_\tau(u)-F_\tau(v)\|_{L_\xi^{2(q+1)/(2q+1)}}
\le C (1+\|u\|_1^q+\|u\|_1^q)\|u-v\|.
\end{align*}  
The estimates \eqref{G-Gtau}-\eqref{Gtau-} can be derived from conditions \eqref{f-ftau}-\eqref{ftau-}, resp., together with \eqref{emb}; the calculations are analogous, and we omit the details.
  
Finally, to show \eqref{FGtau-} for any $u, v \in \dot H^1$, we observe that $f_\tau(u)-f_\tau(v)=M_\tau(u, v)(u-v)$ and $g_\tau(u)-g_\tau(v)=N_\tau(u, v)(u-v)$ with $M_\tau(u, v):=\int_0^1 f_\tau'(v+s(u-v)) \,\mathrm{d}s$ $N_\tau(u, v):=\int_0^1 g_\tau'(v+s(u-v)) \,\mathrm{d}s$.
Then
\begin{align*}
& 2 \langle u-v , F_\tau(u)-F_\tau(v) \rangle_{V^*, V} + \tau \|F_\tau(u)-F_\tau(v)\|^2 \nonumber + (p^*-1) \|G_\tau(u))-G_\tau(v)\|_{\mathcal L_2^0}^2 \\
& = \int_{\mathcal{O}} [ 2 (f_\tau(u) - f_\tau(v))(u-v) + \tau |f_\tau(u) - f_\tau(v)|^2 \\
& \quad + (p^*-1) \sum_{m \in \mathbb N_+} q_m |[g_\tau(u) - g_\tau(v)] g_m|^2 ] \, d\xi \\
& = \int_{\mathcal{O}} [ 2 (f_\tau(u) - f_\tau(v))(u-v) + \tau |f_\tau(u) - f_\tau(v)|^2
+ C_Q (p^*-1) |g_\tau(u) - g_\tau(v)|^2 ] \, d\xi \\
& = \int_{\mathcal{O}} [M_\tau(u, v) + \tau |M_\tau(u, v)|^2 + C_Q (p^*-1) |N_\tau(u, v)|^2] |u-v|^2 \, d\xi.
\end{align*}
By Cauchy--Schwarz inequality, we obtain
\begin{align*}
& 2 \langle u-v , F_\tau(u)-F_\tau(v) \rangle_{V^*, V} + \tau \|F_\tau(u)-F_\tau(v)\|^2 \nonumber + (p^*-1) \|G_\tau(u))-G_\tau(v)\|_{\mathcal L_2^0}^2 \\
& \leq \int_{\mathcal{O}} H(u,v) |u - v|^2 \, d\xi,
\end{align*} 
where 
\begin{align*}
H(u,v):=& \int_0^1 [f_\tau'(v+s(u-v)) + \tau |f_\tau'(v+s(u-v))|^2 + \|g_\tau'(v+s(u-v))|^2] \, \,\mathrm{d}s \\
\le & \sup_{\xi \in \mathbb R} \{f_\tau'(\xi) + \tau |f_\tau'(\xi)|^2 + C_Q (p^*-1) \|g_\tau'(\xi)|^2\}.
\end{align*} 
In combination with the above inequality and \eqref{fgtau'}, we conclude \eqref{FGtau-}.
\end{proof}

\begin{lemma} \label{lm-ex-tau}
Let $f$ and $g$ satisfy Assumption \ref{ap}.
Then $f_\tau$ and $g_\tau$ given by 
\eqref{fg-tau} satisfy all the conditions in Assumption \ref{ap-tau}.
\end{lemma}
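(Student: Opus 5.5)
We verify the three conditions of Assumption \ref{ap-tau} directly from the explicit formulas \eqref{fg-tau}, using the consequences of Assumption \ref{ap}: the growth bound $|f(\xi)|\le C(1+|\xi|^{q+1})$ obtained by integrating \eqref{f-grow}, the bound \eqref{f-grow} on $f'$, and the bounds \eqref{g-grow+} and \eqref{g-grow} on $g$ and $g'$. Throughout, write $\phi_\tau(\xi):=(1+\tau|\xi|^{2q})^{-1/2}$ and $\psi_\tau(\xi):=(1+\sqrt\tau|\xi|^q)^{-1/2}$, so that $f_\tau=f\phi_\tau$ and $g_\tau=g\psi_\tau$ with $0<\phi_\tau,\psi_\tau\le 1$.

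\textbf{Growth \eqref{fgtau-grow}.} Since $\phi_\tau,\psi_\tau\le1$, we have $|f_\tau|\le|f|\le C(1+|\xi|^{q+1})$. Likewise $|g_\tau|\le|g|\le C(1+|\xi|^{q/4+1})\le C(1+|\xi|^{q+1})$ by \eqref{g-grow}.

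\textbf{Taming error \eqref{f-ftau}.} Use the elementary bounds
\[
1-(1+x)^{-1/2}\le \min\{1,x\}\le x^{1/2},\qquad x\ge0.
\]
With $x=\tau|\xi|^{2q}$ this gives
\[
|f-f_\tau|\le |f|\,\tau^{1/2}|\xi|^q\le C\tau^{1/2}(1+|\xi|^{2q+1}).
\]
With $x=\sqrt\tau|\xi|^q$ it gives only $|g-g_\tau|\le |g|\,\tau^{1/4}|\xi|^{q/2}$, which is the wrong power of $\tau$. For $g$ we therefore use the linear bound $1-(1+x)^{-1/2}\le x$, which yields
\[
|g-g_\tau|\le |g|\sqrt\tau|\xi|^q\le C\tau^{1/2}(1+|\xi|^{q/4+1+q})\le C\tau^{1/2}(1+|\xi|^{2q+1}).
\]
This is the one place where the choice of elementary inequality matters.

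\textbf{Local Lipschitz bound \eqref{ftau-}.} By the mean value theorem it suffices to show $|f_\tau'|+|g_\tau'|\le C(1+|\xi|^q)$ uniformly in $\tau$. By the product rule, $f_\tau'=f'\phi_\tau+f\phi_\tau'$ with
\[
\phi_\tau'(\xi)=-q\tau|\xi|^{2q-2}\xi\,(1+\tau|\xi|^{2q})^{-3/2}.
\]
The first term satisfies $|f'\phi_\tau|\le L_3(1+|\xi|^q)$. For the second, the key step is the weighted bound
\[
|\phi_\tau'(\xi)|\le q\,\frac{\tau|\xi|^{2q}}{1+\tau|\xi|^{2q}}\,|\xi|^{-1}\le q|\xi|^{-1},
\]
so that $|f\phi_\tau'|\le C(1+|\xi|^{q+1})|\xi|^{-1}$ for $|\xi|\ge1$. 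For $|\xi|\le1$ the factor $|\phi_\tau'|$ is bounded by $q$ since $\tau\le1$. Together these give $|f\phi_\tau'|\le C(1+|\xi|^q)$.

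For $g$ the argument is the same. Here $|\psi_\tau'|\le \frac q2\,\frac{\sqrt\tau|\xi|^q}{1+\sqrt\tau|\xi|^q}\,|\xi|^{-1}\le C|\xi|^{-1}$ for $|\xi|\ge1$, and $|g'|\le C(1+|\xi|^{q/4})$ by \eqref{g-grow+}. Hence $|g_\tau'|\le C(1+|\xi|^{q/4}+|\xi|^{q/4+1}|\xi|^{-1})\le C(1+|\xi|^q)$.

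The main obstacle is the derivative estimate for \eqref{ftau-}: the naive bound $|\phi_\tau'|\le C\tau|\xi|^{2q-1}$ would lose polynomial powers. It has to be replaced by the uniform bound $|\phi_\tau'|\lesssim|\xi|^{-1}$, obtained from the ratio $\tau|\xi|^{2q}/(1+\tau|\xi|^{2q})\le1$, and similarly for $\psi_\tau'$; once that is in place, the rest is bookkeeping.
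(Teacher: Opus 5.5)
Your proof is correct. For \eqref{fgtau-grow} and \eqref{f-ftau} it coincides with the paper's. The paper writes $1-(1+t)^{-1/2}=b(t)\le\min\{t^{1/2},t/2\}$ and uses the square-root branch for $f$ and the linear branch for $g$, exactly as you do.

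The routes differ for \eqref{ftau-}. The paper works with the two-point difference directly:
\begin{itemize}
\item It splits $f_\tau(\xi)-f_\tau(\eta)$ into $(f(\xi)-f(\eta))(1+\tau|\xi|^{2q})^{-1/2}$ plus $f(\eta)$ times the difference of the two taming factors, and rationalizes the latter.
\item It uses $\bigl||\eta|^{2q}-|\xi|^{2q}\bigr|\le C|\xi-\eta|(1+|\xi|^{2q-1}+|\eta|^{2q-1})$ and absorbs mixed products such as $|\eta|^{q+1}|\xi|^{2q-1}$ into $|\xi|^{3q}+|\eta|^{3q}$.
\item It then divides by $1+\tau|\xi|^{2q}+\tau|\eta|^{2q}$. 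The argument for $g_\tau$ is the same, with $\sqrt\tau$ and the power $5q/4$.
\end{itemize}

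You instead prove the pointwise bound $|f_\tau'|+|g_\tau'|\le C(1+|\xi|^q)$ uniformly in $\tau\in(0,1)$, using $\tau|\xi|^{2q}/(1+\tau|\xi|^{2q})\le 1$, and then apply the mean value theorem.

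Your version is shorter. It reduces everything to a one-variable estimate and avoids the two-variable bookkeeping with mixed powers. The price is that it needs $\phi_\tau,\psi_\tau$ to be $C^1$. It also needs $|\xi|^{q-1}$ to stay bounded near $0$ in your bound for $\psi_\tau'$ on $|\xi|\le 1$, which you covered only by ``the argument is the same''. Both hold under the paper's standing assumption $q>1$, and you should cite that assumption explicitly. The paper's argument keeps the two points separate throughout, at the cost of more algebra.
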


\begin{proof} 
Let $\xi, \eta \in \mathbb R$.
The first inequality \eqref{fgtau-grow} follows immediately from \eqref{f-grow}.
Define
$b(t):=1-(1+t)^{-1/2}$, $t \ge 0$.
Direct computations yield   
\begin{align*}
b(t) =\frac{t}{(1+t) [(1+t)^{1/2}+1]} 
% \le \min \{ \frac{t}{2 t^{1/2}}, \frac t2 \}
 \le \min \{ t^{1/2}, t/2 \},  \quad t \ge 0.
\end{align*}  
Then we use the representation \eqref{fg-tau} to derive 
\begin{align*}
|f(\xi)-f_\tau(\xi)|
& =b(\tau|\xi|^{2q}) |f(\xi)|
\le \tau^{1/2} |\xi|^q |f(\xi)|, \\
|g(\xi)-g_\tau(\xi)|
& = b(\sqrt{\tau}|\xi|^q) |g(\xi)|
\le \frac12 \tau^{1/2} |\xi|^q |g(\xi)|.
\end{align*}   
By \eqref{f-grow}, \eqref{g-grow}, and Young inequality, we derive \eqref{f-ftau}. 

To show \eqref{ftau-}, we use the representation \eqref{fg-tau} (see \cite[Lemma 2]{LS26}) to get 
\begin{align*}
& f_\tau(\xi)-f_\tau(\eta)  \\
% &=\frac{f(\xi)}{(1+\tau|\xi|^{2q})^{1/2}}-\frac{f(\eta)}{(1+\tau|\eta|^{2q})^{1/2}}  \\
% &= \frac{f(\xi) (1+\tau|\eta|^{2q})^{1/2}-f(\eta)(1+\tau|\xi|^{2q})^{1/2}}{(1+\tau|\xi|^{2q})^{1/2} (1+\tau|\eta|^{2q})^{1/2}} \nonumber  \\
% &\le \frac{|f(\xi)-f(\eta)| (1+\tau|\eta|^{2q})^{1/2}}{(1+\tau|\xi|^{2q})^{1/2} (1+\tau|\eta|^{2q})^{1/2}} \\
% & \quad + \frac{|f(\eta)| \cdot |(1+\tau|\eta|^{2q})^{1/2}-(1+\tau|\xi|^{2q})^{1/2}]|}{(1+\tau|\xi|^{2q})^{1/2} (1+\tau|\eta|^{2q})^{1/2}}   \\
&= \frac{f(\xi)-f(\eta)}{(1+\tau|\xi|^{2q})^{1/2}} 
+ \frac{f(\eta) [(1+\tau|\eta|^{2q})^{1/2}-(1+\tau|\xi|^{2q})^{1/2}]}{(1+\tau|\xi|^{2q})^{1/2} (1+\tau|\eta|^{2q})^{1/2}}  \nonumber   \\
&= \frac{f(\xi)-f(\eta)}{(1+\tau|\xi|^{2q})^{1/2}} 
+ \frac{\tau f(\eta) (|\eta|^{2q}-|\xi|^{2q})} 
{(1+\tau|\xi|^{2q})^{1/2} (1+\tau|\eta|^{2q})^{1/2}[(1+\tau|\eta|^{2q})^{1/2}+(1+\tau|\xi|^{2q})^{1/2}]}.
\end{align*} 
The inequality
$||\eta|^{2q}-|\xi|^{2q}| \le C |\xi-\eta| (1+|\eta|^{2q-1}+|\xi|^{2q-1})$ and \eqref{f-grow} imply 
\begin{align*}
|f_\tau(\xi)-f_\tau(\eta)|  
& \le |f(\xi)-f(\eta)|
+ \frac{C \tau (1+|\eta|^{q+1}) \cdot |\xi-\eta| (1+|\eta|^{2q-1}+|\xi|^{2q-1})} 
{1+\tau|\xi|^{2q}+\tau|\eta|^{2q}}  \\  
& \le |f(\xi)-f(\eta)|
+ C  |\xi-\eta| \cdot \frac{ \tau(1+ |\xi|^{3q}+ |\eta|^{3q})}{1+\tau|\xi|^{2q}+ \tau |\eta|^{2q}} \\ 
&\le C |\xi-\eta| (1+ |\eta|^q+ |\xi|^q).
\end{align*} 
As $g$ satisfies \eqref{g-grow}, similar argument implies 
\begin{align*}
|g_\tau(\xi)-g_\tau(\eta)|  
& \le |g(\xi)-g(\eta)|
+ \frac{C \sqrt{\tau} (1+|\eta|^{q/4+1}) \cdot |\xi-\eta| (1+|\eta|^{q-1}+|\xi|^{q-1})} 
{1+ \sqrt{\tau} |\xi|^q + \sqrt{\tau} |\eta|^q}  \\  
& \le |g(\xi)-g(\eta)|
+ C  |\xi-\eta| \cdot \frac{ \sqrt{\tau}(1+ |\xi|^{5q/4}+ |\eta|^{5q/4})}{1+\sqrt{\tau} |\xi|^q + \sqrt{\tau} |\eta|^q} \\ 
&\le C |\xi-\eta| (1+ |\eta|^{q/4}+ |\xi|^{q/4}).
\end{align*} 
thereby showing \eqref{ftau-}, in combination with Young inequality. 
\end{proof}

Similarly to the proof of Lemma \ref{lm-ex-tau}, one can show that there exist many other pairs of tamed functions \(f_\tau\) and \(g_\tau\). On the other hand, in general, the coupled monotonicity assumption \eqref{fgtau'} cannot be derived under the stated hypotheses without further assumptions, as the following determinisitc example illustrates.

\begin{remark}  
Take $q=2$, $p=2$, $C_Q=1$, $g\equiv 0$ (i.e., $L_4=L_5=0$ in \eqref{g-grow+}).
Let $\xi \in \mathbb R$ and \(r_n=2^n\) for \(n\ge 1\). Choose cut-off functions
$\psi_n\in C_c^\infty ((-r_n,-r_n/2))$, $0 \le \psi_n \le 1$, 
s.t. \(\psi_n=1\) on $[- 3r_n/4, - r_n/2]$. 
% The intervals \((-r_n,-r_n/2)=(-2^n,-2^{n-1})\) are disjoint.
For a fixed large constant \(A>0\), define
\begin{align*}
f'(\xi)=-|\xi|^2-A\sum_{n\ge 1}\psi_n(\xi)(1+|\xi|^2), \quad f(\xi):=\int_0^\xi f'(s)\,ds.
\end{align*} 
Then \(f\in C^1(\mathbb R)\) and satisfies \eqref{fg-mon}-\eqref{f-grow} 
with $L_0=0$, \(L_1=0\), \(L_2=1\), and \(L_3=A+1\).  

Now fix \(n\ge 1\) and put
$\tau_n=r_n^{-4}=2^{-4n} \in (0,1)$.
At \(x=-r_n\), the cut-off supports give
$\sum_{m\ge 1}\psi_m(-r_n)=0$, 
so
$f'(-r_n)=-r_n^2$. 
On the other hand, $f(-r_n) = -\int_{-r_n}^0 f'(s)\,ds \ge c A r_n^3$
for some constant \(c>0\).
As $f_{\tau}(\xi)= f(\xi) (1+\tau |\xi|^4)^{-1/2}$,
$f_{\tau}'(\xi)
=
\frac{f'(\xi)}{(1+\tau |\xi|^4)^{1/2}}
-
\frac{2\tau x^3 f(\xi)}{(1+\tau |\xi|^4)^{3/2}}.$
At \(x=-r_n\) and \(\tau=\tau_n=r_n^{-4}\), we have \(1+\tau_n r_n^4=2\). Hence
$f_{\tau_n}'(-r_n)
= - r_n^2/\sqrt2 +
\tau_n r_n^3 f(-r_n)/\sqrt2$.
Since \(\tau_n r_n^3=r_n^{-1}\) and \(f(-r_n)\ge c A r_n^3\),
we have $f_{\tau_n}'(-r_n)
\ge (cA-1)r_n^2/\sqrt2$.
Choosing \(A\) large enough to ensure \(cA>1\), so
$f_{\tau_n}'(-r_n)\ge C r_n^2$ for some constant \(C>0\). 
Consequently,
$2f_{\tau_n}'(-r_n)
+ \tau_n |f_{\tau_n}'(-r_n)|^2
\ge 2C r_n^2 \longrightarrow \infty$ as $n \to \infty$.
This is a contraction, since \eqref{fgtau'} would require
$2f_{\tau_n}'(-r_n)
+ \tau_n |f_{\tau_n}'(-r_n)|^2
\le M_0$, 
for some \(M_0\) independent of \(n\).  
\end{remark}

Next, we show that Example \ref{ex} satisfies Assumption \ref{ap-tau'}.

\begin{example} 
Let $q=2$, \(f(\xi)=\xi-\xi^3\), \(g(\xi)=\gamma_0 |\xi|^{1/2}\xi\), 
$f_\tau(\xi)=f(\xi)(1+\tau|\xi|^4)^{-1/2}$, and $g_\tau(\xi)=g(\xi) (1+\sqrt{\tau}|\xi|^2)^{-1/2}$, $\xi \in \mathbb R$.
Then 
$f'(\xi)=1-3\xi^2$, $|g'(\xi)|^2=9 \gamma_0^2 |\xi|/4$.
% Because \(f\) and \(g\) are odd and the taming denominators depend only on \(|\xi|\), the functions \(f_\tau'\) and \(g_\tau'\) are even. Hence, 
It suffices to consider \(x :=|\xi| \ge 0\).
Put
$\alpha=\tau^{-1/2}\ge 1$ and $y=\tau^{1/4} x \ge0$. 
Then
$1+\tau x^4=1+y^4$.
For \(x\ge0\),
\[
f_\tau'(x)
=
\frac{1-3x^2}{(1+\tau x^4)^{1/2}}
-\frac{2\tau x^3(x-x^3)}{(1+\tau x^4)^{3/2}}
=A(y)-\alpha B(y),
\] 
where
$A(y)=(1-y^4)(1+y^4)^{-3/2}$ and  
$B(y)=y^2(3+y^4)(1+y^4)^{-3/2}$.
Similarly, 
\[
g_\tau'(\xi)
=
\frac{3\gamma_0 x^{1/2}}{2(1+\sqrt{\tau}x^2)^{1/2}}
-
\frac{\gamma_0 \sqrt{\tau}x^{5/2}}{(1+\sqrt{\tau}x^2)^{3/2}}
=
\frac{\gamma_0}{2} \alpha^{1/4} \sqrt{H(y)},
\] 
where $H(y)=y(3+y^2)^2 (1+y^2)^{-3}$.
Therefore
$|g_\tau'(\xi)|^2
= \gamma_0^2 \alpha^{1/2}H(y)/4$.

Let $K=C_Q (p^*-1)\gamma_0^2/4$.
Direct computations yield \eqref{fgtau'}:
\begin{align*}
& 2f_\tau'(x)+\tau |f_\tau'(x)|^2+ C_Q (p^*-1) |g_\tau'(x)|^2 \\
% &= 2(A-\alpha B)+\alpha^{-2}(A-\alpha B)^2 + K\alpha^{1/2}H \\
% &= 2A(y)-2\alpha B(y)+ (A(y)/ \alpha-B(y))^2 + K \alpha^{1/2} H(y) \\
& \le \sup_{\alpha \ge 1, ~ y \ge 0} \{2A(y)-2\alpha B(y)+ (A(y)/ \alpha-B(y))^2
+ K \alpha^{1/2} H(y\} \\
& = \max\{3, 2+ 27 C_Q^2 (p^*-1)^2 \gamma_0^4/128\}:=M_0.
\end{align*} 
Moreover, $M_0 < 2 \lambda_1$ if and only if $27 C_Q^2 (p-1)^2 \gamma_0^4 < 256 (\lambda_1-1)$, which corresponds to the uniform condition \eqref{con-uni}, as shown in Example \ref{ex}.
\end{example}

\subsection{Auxiliary Process and Moment Estimates} 
\label{sec4.3}

% Since the drift-GTEM \eqref{t-fem} is linearly implicit, unlike the finite-D case studied in \cite{LW26a}, there seems to be no corresponding continuous-time interpolation s.t. it is an It\^o process. Consequently, one could not utilize the tool of the It\^o formula.
As in \cite{LQ21}, we define the auxiliary process $\{{\widehat Y}_n^h\}$ by ${\widehat Y}_0^h=\mathcal P_h X_0$ and  
\begin{align}\label{aux}
{\widehat Y}_n^h 
=S_{h,\tau}^n \mathcal P_h X_0
+\tau \sum_{i=0}^{n-1} S_{h,\tau}^{n-i} \mathcal P_h F_\tau(X(t_i))
+\sum_{i=0}^{n-1} S_{h,\tau}^{n-i} \mathcal P_h G_\tau(X(t_i)) \delta_i W,
\end{align}
for $n \in \mathbb N_+$, where the terms $Y_i^h$ in the discrete deterministic and stochastic convolutions of \eqref{full-sum} are both replaced by $X(t_i)$.
It is clear that for $n \in \mathbb N_+$,
\begin{align} \label{aux+} 
{\widehat Y}_n^h
&={\widehat Y}_{n-1}^h+\tau A_h {\widehat Y}_n^h 
+\tau \mathcal P_h F_\tau(X(t_{n-1}))
+\mathcal P_h G_\tau(X(t_{n-1})) \delta_{n-1} W.
\end{align}

Similarly to the smoothing property \eqref{ana} of the heat semigroup, for the discrete semigroup \(S_{h,\tau}\), there holds that 
\begin{align}  \label{sht}
|S_{h,\tau}^k P_h u\|_\mu \le C t_k^{-\frac{\mu-\nu}2} (1+ \tau \lambda^*)^{-k} \|u\|_\nu, \quad & \forall~ k \in \mathbb N_+, ~ 0 \le \nu \le \mu \le 2,~ u \in \dot H^\nu,
\end{align}   
where $\lambda^*: = \inf_{h>0} \lambda_1^h$ with $\lambda_1^h$ being the first eigenvalue of the discrete negative Dirichlet Laplacian $A_h$. 
Due to the quasi-uniformity of the grid and the Dirichlet boundary condition (DBC), the smallest discrete eigenvalue has a positive lower bound: $\lambda^*>0$. 
Taking $u=(-A_h)^{-1/2} v$ with $v \in \dot H^{-1}$ and $(\mu, \nu)=(1, 0)$, in combination with the fact that $\|A_h^{-1/2} P_h v\| \le \|v\|_{-1}$, implies 
\begin{align} \label{sht-1} 
\|S_{h,\tau}^k \mathcal P_h v\| \le C t_k^{-1/2} (1+ \tau \lambda^*)^{-k} \|v\|_{-1},
& \quad \forall~ k \in \mathbb N_+, ~ v \in \dot H^{-1}.
\end{align}

We have the following uniform-in-time moment estimate for the auxiliary process \eqref{aux} in the \(\dot H^1\)-norm, with a logarithmic factor. This factor can be removed in the 1-D case under the \(\dot H^{1+\gamma}\)-norm for any \(\gamma\in[0,1)\). These moment estimates are crucial for the error analysis in the next two sections.

\begin{lemma} \label{lm-reg-aux} 
Let $p \ge 2$, $X_0 \in L_\omega^{p(q+1)} \dot H^1$, and Assumptions \ref{ap} and (\eqref{fgtau-grow} in) \ref{ap-tau} hold.  
There exist positive constants $\tau_3 \in (0, 1]$ and $C$ s.t. for all $\tau \in (0, \tau_3)$,  
\begin{align}  \label{reg-aux}  
 \sup_{1 \le n \le N} \|{\widehat Y}_n^h\|_{L_\omega^{p} \dot H^1}  
& \le e^{CT} (1+\|X_0\|_{L_\omega^{p(q+1)} \dot H^1}^{q+1}) |\ln \tau |.
\end{align}   
Assume furthermore that $d=1$ and $X_0 \in L_\omega^{p(q+1)} \dot H^{1+\gamma}$ with $\gamma \in [0, 1)$, then 
\begin{align}  \label{reg-aux+}  
 \sup_{1 \le n \le N} \|{\widehat Y}_n^h\|_{L_\omega^{p} \dot H^{1+\gamma}}  
& \le e^{CT} (1+\|X_0\|_{L_\omega^{p(q+1)} \dot H^1}^{q+1}
+\|X_0\|_{L_\omega^p \dot H^{1+\gamma}}).
\end{align}    
Moreover, under \eqref{con-uni}, the estimates \eqref{reg-aux} and \eqref{reg-aux+} are uniform in time.
\end{lemma}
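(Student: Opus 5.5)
We work directly with the explicit representation \eqref{aux}. It splits $\widehat Y_n^h$ into the discrete initial term $S_{h,\tau}^n\mathcal P_hX_0$, the discrete deterministic convolution $\tau\sum_{i<n}S_{h,\tau}^{n-i}\mathcal P_hF_\tau(X(t_i))$, and the discrete stochastic convolution $\sum_{i<n}S_{h,\tau}^{n-i}\mathcal P_hG_\tau(X(t_i))\delta_iW$. We bound each term in $L^p_\omega\dot H^1$ (resp. $L^p_\omega\dot H^{1+\gamma}$) using three ingredients: the discrete smoothing estimate \eqref{sht}, the growth bound \eqref{fgtau-grow}, and the moment bound \eqref{spde-hol} of Theorem \ref{tm-reg+} for $\sup_t\|X(t)\|_{L^{p(q+1)}_\omega\dot H^1}$. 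The last bound is uniform in time under \eqref{con-uni}. The factor $(1+\tau\lambda^*)^{-k}$ in \eqref{sht} converts all the discrete sums into geometric series bounded independently of $n$ and $T$. This is what gives the uniform-in-time statements once the exact-solution moments are uniform.

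\textbf{Initial and drift terms.} The initial term is handled by \eqref{sht} with $\mu=\nu=1$ (or $\mu=\nu=1+\gamma$). This uses the $\dot H^1$- and $\dot H^{1+\gamma}$-stability of $\mathcal P_h$ on quasi-uniform meshes, or one may take $\nu=0$ after passing through $\mathcal P_h$. For the drift, \eqref{fgtau-grow} and \eqref{emb} give $\|F_\tau(X(t_i))\|\le C(1+\|X(t_i)\|_1^{q+1})$. Then \eqref{sht} with $(\mu,\nu)=(1+\gamma,0)$ yields
\[
\tau\sum_{k=1}^n t_k^{-(1+\gamma)/2}(1+\tau\lambda^*)^{-k}\le C,
\]
since $(1+\gamma)/2<1$. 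Combining this with Minkowski's inequality and \eqref{spde-hol} with exponent $p(q+1)$ bounds the drift term by $e^{CT}(1+\|X_0\|^{q+1}_{L^{p(q+1)}_\omega\dot H^1})$.

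\textbf{Stochastic term: the main obstacle.} By the discrete BDG inequality,
\[
\Big\|\sum_{i<n}S_{h,\tau}^{n-i}\mathcal P_hG_\tau(X(t_i))\delta_iW\Big\|_{L^p_\omega\dot H^\mu}^2\le C\,\tau\sum_{i<n}\big\|S_{h,\tau}^{n-i}\mathcal P_hG_\tau(X(t_i))\big\|^2_{L^p_\omega\mathcal L_2^\mu}.
\]

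\emph{General $d$, $\mu=1$.} Only the $\mathcal L_2^0$-bound of $G_\tau$ is available. It comes from \eqref{fgtau-grow}, \eqref{cq} and \eqref{emb}, namely $\|G_\tau(v)\|_{\mathcal L_2^0}\le C(1+\|v\|_1^{q+1})$; we deliberately avoid $\mathcal L_2^1$, whose finiteness fails in $d=2,3$ by the remark after Theorem \ref{tm-reg+}. With $(\mu,\nu)=(1,0)$ this produces the borderline sum
\[
\tau\sum_{k=1}^n t_k^{-1}(1+\tau\lambda^*)^{-2k}=\sum_{k=1}^n k^{-1}(1+\tau\lambda^*)^{-2k}\le C(1+|\ln\tau|).
\]
The bound is obtained by splitting at $k\approx\tau^{-1}$: the harmonic part gives $|\ln\tau|$ and the tail is geometric. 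This yields a factor $|\ln\tau|^{1/2}\le|\ln\tau|$ for $\tau<\tau_3$, hence \eqref{reg-aux}.

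\emph{$d=1$, $\mu=1+\gamma$.} Here we instead use the $\mathcal L_2^1$-bound \eqref{G21}, $\|G(v)\|_{\mathcal L_2^1}^2\le C(1+\|v\|_1^{q/2+2})$, which holds via $\|v\|_{L^\infty}\le C\|v\|_1$. Its tamed analogue $\|G_\tau(v)\|_{\mathcal L_2^1}$ follows from the pointwise derivative bound on $g_\tau$ implied by \eqref{ftau-} together with $g_\tau(0)$ bounded. With $(\mu,\nu)=(1+\gamma,1)$ the sum becomes $\tau\sum_k t_k^{-\gamma}(1+\tau\lambda^*)^{-2k}\le C$ for $\gamma<1$. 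This gives \eqref{reg-aux+} without the logarithm.

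The delicate point is justifying \eqref{sht} with $\nu=1$ for the discrete operator applied to $\mathcal P_h$ of an $\mathcal L_2^1$ element. If that step fails, I would fall back on $\nu=0$ combined with an interpolation of $G_\tau(X)$ between $\mathcal L_2^0$ and $\mathcal L_2^1$.
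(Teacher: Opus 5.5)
Your proposal is correct and follows essentially the paper's route. You apply Minkowski to the three pieces of \eqref{aux} and use \eqref{sht} with $(\mu,\nu)=(1,0)$ for both the drift and the noise term, the latter after BDG and the $\mathcal L_2^0$-growth from \eqref{fgtau-grow}, which produces the harmonic sum $\sum_k k^{-1}(1+\tau\lambda^*)^{-2k}\le C|\ln\tau|$. For $d=1$ you use $(\mu,\nu)=(1+\gamma,0)$ for the drift and $(1+\gamma,1)$ for the noise via an $\mathcal L_2^1$-bound on $G_\tau$, so the logarithm disappears.

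Two small remarks. First, the bound on $\sup_t\|X(t)\|_{L^{p(q+1)}_\omega\dot H^1}$ should come from \eqref{est-spde} at exponent $p(q+1)$; reading \eqref{spde-hol} at that exponent would cost $p(q+1)^2$ moments of $X_0$. Second, deriving the $\mathcal L_2^1$-bound for $G_\tau$ from \eqref{ftau-} goes beyond the lemma's stated hypothesis, which includes only \eqref{fgtau-grow} from Assumption \ref{ap-tau}. The paper needs the same derivative control on $g_\tau$ but leaves it implicit when it cites \eqref{G21}, which is stated for $G$, not $G_\tau$.
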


\begin{proof} 
Minkowski inequality yields that 
\begin{align*}
\|{\widehat Y}_n^h\|_{L_\omega^{p} \dot H^1} 
& = \|S_{h,\tau}^n \mathcal P_h X_0\|_{L_\omega^{p} \dot H^1}
+ \tau \sum_{i=0}^{n-1} \|S_{h,\tau}^{n-i} \mathcal P_h F_\tau(X(t_i))\|_{L_\omega^{p} \dot H^1} \\
& \quad + \Big\|\sum_{i=0}^{n-1} S_{h,\tau}^{n-i} \mathcal P_h G_\tau(X(t_i)) \delta_i W \Big\|_{L_\omega^{p} \dot H^1}.
\end{align*}
By \eqref{sht}, \eqref{fgtau-grow}, and the embedding \eqref{emb}, we get a bound for the first two terms:  
\begin{align*}
& \|S_{h,\tau}^n \mathcal P_h X_0\|_{L_\omega^{p} \dot H^1}
+ \tau \sum_{i=0}^{n-1} \|S_{h,\tau}^{n-i} \mathcal P_h F_\tau(X(t_i))\|_{L_\omega^{p} \dot H^1} \\
& \le C \|X_0\|_{L_\omega^{p} \dot H^1}
+\tau \sum_{i=0}^{n-1} t_{n-i}^{-1/2}e^{-c t_{n-i}}  \|F_\tau(X(t_i))\|_{L_\omega^{p} L_\xi^2} \\
& \le C \|X_0\|_{L_\omega^{p} \dot H^1}
+ C (1 + \|X\|^{q+1}_{L_t^\infty L_\omega^{p(q+1)} \dot H^1} ) \Big(\tau  \sum_{k=1}^\infty t_k^{-1/2}e^{-c t_k}\Big).
\end{align*} 

For the last term, discrete BDG and Minkowski inequalities and \eqref{sht} imply
\begin{align*}
& \sup_{1 \le n \le N} \mathbb E \Big\|\sum_{i=0}^{n-1} S_{h,\tau}^{n-i} \mathcal P_h G_\tau(X(t_i)) \delta_i W \Big\|_1^p \\
% & \le C \sup_{1 \le n \le N} \mathbb E ( \sum_{i=0}^{n-1} \|S_{h,\tau}^{n-i} \mathcal P_h G_\tau(X(t_i)) \delta_i W\|_1^2 )^{p/2} \\
% & \le C \sup_{1 \le n \le N}  \mathbb E ( \sum_{i=0}^{n-1} t_k^{-1} e^{-2 c t_k}  \|G_\tau(X(t_i)) \delta_i W\|^2 )^p \\
& \le C \sup_{1 \le n \le N} \Big( \sum_{i=0}^{n-1} t_{n-i}^{-1} e^{-2 c t_{n-i}} 
\|G_\tau(X(t_i)) \delta_i W\|_{L_\omega^{p} L_\xi^2}^2 \Big)^{p/2}.
\end{align*}
The continuous BDG inequality and the growth condition \eqref{fgtau-grow} yield that 
\begin{align*}
\mathbb E  \|G_\tau(X(t_i)) \delta_i W\|^p
\le C \tau^{p/2} \mathbb E  \|G_\tau(X(t_i))\|_{\mathcal L_2^0}^p
\le C \tau^{p/2} (1+ \mathbb E  \|X\|_1^{p(q+1)}),
\end{align*} 
and thus 
\begin{align*}
& \sup_{1 \le n \le N} \mathbb E \Big\|\sum_{i=0}^{n-1} S_{h,\tau}^{n-i} \mathcal P_h G_\tau(X(t_i)) \delta_i W \Big\|_1^p \\
& \le C (1 + \|X\|_{L_t^\infty L_\omega^{p(q+1)} \dot H^1}^{p(q+1)} ) 
\Big( \tau \sum_{k=1}^\infty t_k^{-1} e^{-2 c t_k} \Big)^{p/2}.
\end{align*}
Combining the above estimates, we obtain 
\begin{align*}
\sup_{0 \le n \le N} \|{\widehat Y}_n^h\|_{L_\omega^{p} \dot H^1} 
& \le C \|X_0\|_{L_\omega^{p} \dot H^1}
+ C (1 + \|X\|^{q+1}_{L_t^\infty L_\omega^{p(q+1)} \dot H^1} ) 
\Big( \tau\sum_{n=1}^\infty t_n^{-1/2}e^{-c t_n} \Big) \\
& \quad + C (1 + \|X\|_{L_t^\infty L_\omega^{2 p(q+1)} \dot H^1}^{2p(q+1)} ) 
\Big( \tau \sum_{n=1}^\infty t_n^{-1} e^{-2 c t_n} \Big)^{1/2}.
\end{align*} 
Using the facts that 
$\sum_{n=1}^\infty n^{-1/2}e^{-c \tau n}
 \le C \tau^{-1/2}$ and %  \sqrt{\pi/c}, 
 there exists $\tau_3 \in (0, 1]$ s.t. 
$\sum_{n=1}^\infty n^{-1} e^{- c \tau n} 
% = -\ln (1-e^{-c \tau}) \le |\ln \tau| + \ln \frac{1+c}{c}
 \le C |\ln \tau|$ for all $\tau \in (0, \tau_3)$,  
we have 
\begin{align*}
\sup_{n \in \mathbb N} \|{\widehat Y}_n^h\|_{L_\omega^{p} \dot H^1} 
& \le C \|X_0\|_{L_\omega^{p} \dot H^1}
+ C (1 + \|X\|^{q+1}_{L_t^\infty L_\omega^{p(q+1)} \dot H^1}) |\ln \tau|. 
\end{align*} 
This, in combination with the uniform regularity \eqref{spde-hol}, gives \eqref{reg-aux}.    

In the case $d=1$ and $X_0 \in L_\omega^{p(q+1)} \dot H^{1+\gamma}$ with $\gamma \in [0, 1)$, by \eqref{sht}, \eqref{fgtau-grow}, and the embedding \eqref{emb}, we get  
\begin{align*}
& \|S_{h,\tau}^n \mathcal P_h X_0\|_{L_\omega^{p} \dot H^{1+\gamma}}
+ \tau \sum_{i=0}^{n-1} \|S_{h,\tau}^{n-i} \mathcal P_h F_\tau(X(t_i))\|_{L_\omega^{p} \dot H^{1+\gamma}} \\
& \le C \|X_0\|_{L_\omega^{p} \dot H^{1+\gamma}} 
+ C (1 + \|X\|^{q+1}_{L_t^\infty L_\omega^{p(q+1)} \dot H^1} ) 
\Big(\tau  \sum_{k=1}^\infty t_k^{-(1+\gamma)/2}e^{-c t_k} \Big).
\end{align*}
For the last term, by discrete BDG, Minkowski, and H\"older inequalities, \eqref{sht}, \eqref{fgtau-grow}, and \eqref{G21}, we have 
\begin{align*}
& \sup_{1 \le n \le N} \Big\|\sum_{i=0}^{n-1} S_{h,\tau}^{n-i} \mathcal P_h G_\tau(X(t_i)) \delta_i W \Big\|_{L_\omega^{p} \dot H^{1+\gamma}} \\
% & \le C \sup_{1 \le n \le N} \mathbb E ( \sum_{i=0}^{n-1} \|S_{h,\tau}^{n-i} \mathcal P_h G_\tau(X(t_i)) \delta_i W\|_{1+\gamma}^2 )^{1/2} \\ 
& \le C \sup_{1 \le n \le N} \Big( \sum_{i=0}^{n-1} t_i^{-\gamma} e^{-2 c t_i} \|G_\tau(X(t_i)) \delta_i W\|_{L_\omega^{p} \dot H^1}^2 \Big)^{1/2} \\
& \le C (1 + \|X\|_{L_t^\infty L_\omega^{p(q/4+1)} \dot H^1}^{q/4+1} ) 
\Big( \tau \sum_{k=1}^\infty t_k^{-\gamma} e^{-2 c t_k} \Big)^{1/2}.
\end{align*} 
Combining the above two estimates and using the fact that 
$\sum_{n=1}^\infty n^{- \beta}e^{-c \tau n}
 \le C \tau^{- \beta}$ for all $\beta \in [0, 1)$, we obtain 
\begin{align*}
\sup_{0 \le n \le N} \|{\widehat Y}_n^h\|_{L_\omega^{p} \dot H^1} 
& \le C \|X_0\|_{L_\omega^{p} \dot H^{1+\gamma}} 
+ C (1 + \|X\|^{q+1}_{L_t^\infty L_\omega^{p(q+1)} \dot H^1}).
\end{align*} 
Then we conclude \eqref{reg-aux+} by \eqref{est-spde}. 
\end{proof}

\begin{remark}  \label{rk-aux}
To derive the \(L_t^\infty L_\omega^p L_\xi^\infty\)- or \(L_t^\infty L_\omega^p \dot H^{1+\gamma}\)-regularity of the auxiliary process \eqref{aux}, with \(\gamma\in(0,1)\), in the 2-D and 3-D cases, one has to estimate the \(L_t^\infty L_\omega^{2p}\mathcal L_2^1\)-norm of \(G_\tau(X)\). This seems impossible, as shown in \eqref{G2+}, where only the boundedness of
$\|G(X)\|_{L_t^{2\alpha_1}L_\omega^p\mathcal L_2^1}$
for finite \(\alpha_1\) is available. This is the main reason why the strong error estimate in Theorem \ref{tm-err} cannot be established for \(d=2,3\). 
\end{remark}

\subsection{Strong Error Estimate for Auxiliary Process}
\label{sec4.4}

In this part, we establish the strong error estimate between the exact solution $X$ of Eq.~\eqref{see} and the auxiliary process $\{\widehat Y_n^h\}$ defined in \eqref{aux} for $d=1,2,3$.  

Let $N$ be a fixed positive integer.
Denote by
$E_{h,\tau}(t)=S(t)-S_{h,\tau}^n\mathcal P_h$,
$t\in(t_{n-1},t_n]$, $1\le n\le N$.
We will use the following well-known estimate of \(E_{h,\tau}\); see, e.g., \cite[Lemma 4.1]{LQ21}:
\begin{align} \label{eht}  
\|E_{h,\tau} (t) x\| 
\le C (h^\mu+\tau^\frac\mu2) e^{-ct} t^{-\frac{\mu-\nu}2} \|x\|_\nu,
& \quad \forall~t>0, ~ x \in \dot H^\nu.
\end{align}

\begin{theorem} \label{tm-aux} 
Let $p \ge 2$, $X_0 \in L_\omega^{2p(q+1)} \dot H^1$, and Assumptions \ref{ap} and (\eqref{f-ftau} in) \ref{ap-tau} hold. 
For any $\epsilon \in (0, 1)$, there exists a positive constant $C$ s.t.    
\begin{align} \label{err-aux}
& \sup_{0 \le n \le N} \|X(t_n)-{\widehat Y}_n^h\|_{L_\omega^p L_\xi^2}
 \le e^{CT} (1 + \|X_0\|_{L_\omega^{2p(q+1)} \dot H^1}^{2q+1})
(h^{1-\epsilon}+\tau^{(1-\epsilon)/2}).
\end{align}   
Assume furthermore that $d=1$ and $X_0 \in L_\omega^{2p(q+1)} \dot H^{1+\gamma}$ with $\gamma \in [0, 1)$, then 
\begin{align}  \label{err-aux+}  
& \sup_{0 \le n \le N} \|X(t_n)-{\widehat Y}_n^h\|_{L_\omega^p L_\xi^2}  \nonumber \\ 
& \le e^{CT} (1+\|X_0\|_{L_\omega^{p(q+1)} \dot H^1}^{q+1}
+\|X_0\|_{L_\omega^p \dot H^{1+\gamma}})(h^{1+\gamma}+\tau^{(1+\gamma)/2}).
\end{align}    
Moreover, under \eqref{con-uni}, the estimates \eqref{err-aux} and \eqref{err-aux+} are uniform in time. 
\end{theorem}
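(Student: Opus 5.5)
We compare the mild formulation \eqref{mild} at $t=t_n$ with the discrete convolution \eqref{aux}. This splits the error $X(t_n)-\widehat Y_n^h$ into three parts:
\begin{align*}
X(t_n)-\widehat Y_n^h
&= E_{h,\tau}(t_n) X_0
+ \sum_{i=0}^{n-1}\int_{t_i}^{t_{i+1}} \bigl[S(t_n-r)F(X(r))-S_{h,\tau}^{n-i}\mathcal P_h F_\tau(X(t_i))\bigr]\,\mathrm{d}r \\
&\quad + \sum_{i=0}^{n-1}\int_{t_i}^{t_{i+1}} \bigl[S(t_n-r)G(X(r))-S_{h,\tau}^{n-i}\mathcal P_h G_\tau(X(t_i))\bigr]\,\mathrm{d}W(r)
=: \mathrm{Er}_0+\mathrm{Er}_F+\mathrm{Er}_G .
\end{align*}
The initial term is handled directly by \eqref{eht} with $(\mu,\nu)=(1,1)$ in the general case and $(\mu,\nu)=(1+\gamma,1+\gamma)$ when $d=1$.

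For the drift term we add and subtract, writing $\mathrm{Er}_F$ as a sum of four pieces:
\begin{itemize}
\item $[S(t_n-r)-S(t_n-t_i)]F(X(r))$;
\item $S(t_n-t_i)[F(X(r))-F(X(t_i))]$;
\item $E_{h,\tau}(t_n-t_i)F(X(t_i))$;
\item $S_{h,\tau}^{n-i}\mathcal P_h[F-F_\tau](X(t_i))$.
\end{itemize}
The first and third pieces use \eqref{ana}, \eqref{ana-hol} and \eqref{eht}. We take $\nu=0$ and $\mu=1-\epsilon$, so that the singularity $(t_n-r)^{-(1-\epsilon)/2}$ is integrable with an $e^{-c(t_n-r)}$ factor, together with $\|F(X)\|_{L_t^\infty L^p_\omega L^2_\xi}\le C(1+\|X\|^{q+1}_{L^\infty_tL^{p(q+1)}_\omega\dot H^1})$ from \eqref{emb} and \eqref{est-spde}. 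The second piece uses \eqref{F-} in the $\dot H^{-1}$ norm combined with the smoothing $\|S(t)\|_{\dot H^{-1}\to H}\le Ct^{-1/2}e^{-ct}$, Hölder in $\omega$, and the $\mathcal C^{1/2}_tL^p_\omega L^2_\xi$ regularity \eqref{spde-hol}. This step needs $\|X\|_{L^{2pq}_\omega\dot H^1}$, which explains the $L^{2p(q+1)}_\omega$ assumption on $X_0$. The fourth piece uses \eqref{sht-1} and \eqref{F-Ftau}, giving $\tau^{1/2}(1+\|X\|_1^{2q+1})$ and hence the power $2q+1$ in \eqref{err-aux}. Summing $\tau\sum_k t_k^{-1/2}e^{-ct_k}\le C$ gives a time-uniform bound.

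The stochastic term is treated by the BDG inequality (an $L^p_\omega$ Itô isometry-type bound), which reduces it to $\bigl(\int_0^{t_n}\|\cdot\|^2_{L^p_\omega\mathcal L_2^0}\,\mathrm{d}r\bigr)^{1/2}$. We split it into the same four pieces as the drift. Only $\mathcal L_2^0$-type quantities are available in $d=2,3$: $\|G(X)\|_{\mathcal L_2^0}$ is bounded by \eqref{g-grow} and \eqref{emb}. The difference $\|G(X(r))-G(X(t_i))\|_{\mathcal L_2^0}$ is controlled via \eqref{g-grow-} and the Hölder inequality, using $|g(a)-g(b)|\le C(1+|a|^{q/2}+|b|^{q/2})|a-b|$ and $\dot H^1\hookrightarrow L^{2(q+1)}_\xi$. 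Because this requires $L^{2(q+1)/q}$-type control of the factor, the temporal Hölder norm $\|X(r)-X(t_i)\|$ must be taken in $L^{s}_\xi$ with $s>2$. Interpolating $\mathcal C^{1/2}_tL^2_\xi$ with $L^\infty_t\dot H^1$ costs an $\epsilon$, which is the source of the $h^{1-\epsilon}+\tau^{(1-\epsilon)/2}$ rate. The semigroup pieces use $\mu=1-\epsilon$ with $\nu=0$, so $(t_n-r)^{-(1-\epsilon)}$ is square-integrable. The taming piece uses \eqref{G-Gtau}; since that bound is only in the $\mathcal L_2^{-1}$ norm, we instead work directly with \eqref{f-ftau} in $\mathcal L_2^0$ together with \eqref{emb}.

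In $d=1$ we use higher regularity to get \eqref{err-aux+}. By \eqref{spde-1}, $X\in L^\infty_tL^p_\omega\dot H^{1+\gamma}$ and $\mathcal C^{1/2}_tL^p_\omega\dot H^\gamma$, and \eqref{G21} gives $G(X)\in L^\infty_tL^p_\omega\mathcal L_2^1$. We can then take $\nu=1$ and $\mu=1+\gamma$ in \eqref{eht} for both convolutions; the factor $(t_n-r)^{-\gamma}$ remains square-integrable for the stochastic term. We also use $\|\cdot\|_{L^\infty_\xi}\le C\|\cdot\|_1$ to make the Lipschitz-type differences of $F$ and $G$ cost only $\|X(r)-X(t_i)\|$, which is of order $\tau^{1/2}$. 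Uniformity in time under \eqref{con-uni} follows because every bound above is a sum or integral with factor $e^{-c(t_n-r)}$, combined with the time-uniform moments of Theorems \ref{tm-spde} and \ref{tm-reg+}.

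The main obstacle is the stochastic difference term $S(t_n-r)[G(X(r))-G(X(t_i))]$ in $d=2,3$. Because $g$ is superlinear, a pure $L^2_\xi$ Hölder bound does not close, and one must balance the spatial integrability exponents against the available moments. Here I would first try the Hölder split $L^{2(q+1)/q}_\xi\times L^{2(q+1)}_\xi$ with Gagliardo–Nirenberg interpolation, accepting the $\epsilon$-loss.
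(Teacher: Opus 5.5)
You handle the initial term and the drift convolution essentially as the paper does. Your extra piece $[S(t_n-r)-S(t_n-t_i)]F(X(r))$ is harmless; the paper simply splits with $E_{h,\tau}(t_n-r)=S(t_n-r)-S_{h,\tau}^{n-i}\mathcal P_h$. There is, however, a genuine gap in the stochastic convolution, exactly at the step you single out.

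\textbf{The increment term.} Staying in $\mathcal L_2^0$ forces $X(r)-X(t_i)$ into $L^s_\xi$ with $s>2$. Gagliardo--Nirenberg then gives only $\tau^{(1-\theta)/2}$ with $\theta=d(1/2-1/s)$. This $\theta$ is not a free $\epsilon$; it is fixed by the growth of $g$ and the integrability of $X$.
\begin{itemize}
\item With your split $L^{2(q+1)/q}_\xi\times L^{2(q+1)}_\xi$ one gets $\theta=dq/(2(q+1))$, which already equals $1$ for $d=3$, $q=2$.
\item Even the best pairing in $d=3$ does not help. Using $|g'(\xi)|\le C(1+|\xi|^{q/4})$ from \eqref{g-grow+} and only $\dot H^1\hookrightarrow L^6_\xi$, the factor must sit in $L^{24/q}_\xi$, which gives $\theta\ge q/8$.
\item In $d=2$ you could push $s\downarrow 2$, since $\dot H^1\hookrightarrow L^r_\xi$ for all $r<\infty$; in $d=3$ you cannot.
\end{itemize}
So the rate $\tau^{(1-\epsilon)/2}$ for every $\epsilon\in(0,1)$ does not follow.

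\textbf{The missing idea.} Measure the increment of $G$ in a negative Sobolev norm. Use $\|S(t)u\|\le Ct^{-\gamma/2}e^{-ct}\|u\|_{-\gamma}$ with a fixed $\gamma\in(0,1)$, so that $t^{-\gamma}$ is integrable in the BDG bound. Combine this with $L^{q_1}_\xi\hookrightarrow\dot H^{-\gamma}$, $q_1=8(q+1)/(5q+4)$, which is the dual of $\dot H^\gamma\hookrightarrow L^{8(q+1)/(3q+4)}_\xi$ and is available for $d\le 3$ because $8(q+1)/(3q+4)<6$. Since $1/q_1=q/(8(q+1))+1/2$, H\"older places $1+|X(r)|^{q/4}+|X(t_i)|^{q/4}$ in $L^{8(q+1)/q}_\xi$, controlled by $\|X\|_{L^{2(q+1)}_\xi}^{q/4}$. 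Meanwhile $X(r)-X(t_i)$ stays in $L^2_\xi$, so \eqref{spde-hol} yields a full $\tau^{1/2}$ for this term. In the paper the $\epsilon$-loss comes only from the $E_{h,\tau}$ piece ($\mu=1-\epsilon$ in \eqref{eht}) and from a $|\ln\tau|$ factor, not from the increment.

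\textbf{The taming piece.} The same dimensional obstruction affects it. Bounding $\|G(X)-G_\tau(X)\|_{\mathcal L_2^0}$ through \eqref{f-ftau} requires $\|X\|_{L^{2(2q+1)}_\xi}$, which \eqref{emb} does not control in $d=3$ for $q>1$. You do not need to abandon \eqref{G-Gtau}. Combined with \eqref{sht-1} and $\sum_{k\ge1}k^{-1}(1+\tau\lambda^*)^{-2k}\le C|\ln\tau|$, it bounds the squared contribution by $C\tau|\ln\tau|(1+\|X\|^{2q+1}_{L^\infty_tL^{p(2q+1)}_\omega\dot H^1})^2$. The logarithm is then absorbed into $\tau^{-\epsilon}$.

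\textbf{The $d=1$ estimate \eqref{err-aux+}.} As you note yourself, the increment terms cost $\|X(r)-X(t_i)\|\sim\tau^{1/2}$, and the taming errors are $O(\tau^{1/2})$ by \eqref{f-ftau}. So your argument delivers $h^{1+\gamma}+\tau^{1/2}$, not the temporal exponent $(1+\gamma)/2$. The paper's proof has the same limitation: it only sharpens the initial-value, $E_{h,\tau}$, and $G$-taming pieces. This part cannot be closed along these lines.

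A smaller point: for the drift, take $\nu=0$ rather than $\nu=1$ in \eqref{eht}. The factor $(t_n-r)^{-(1+\gamma)/2}$ is integrable, whereas $F(X)\in\dot H^1$ would require $f(0)=0$.
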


\begin{proof}
Let $p \ge 2$ and $1 \le n \le N$.
Subtracting ${\widehat Y}_n^h$ in \eqref{aux} from the mild formulation \eqref{mild}
with $t=t_n$, we get
\begin{align} \label{j}
J^n  : = & \|X(t_n )-{\widehat Y}_n^h \|_{L_\omega^p L_\xi^2}
\le \| E_{h,\tau}(t_n ) X_0\|_{L_\omega^p L_\xi^2} \nonumber \\
& + \Big\|\sum_{i=0}^{n-1} \int_{t_i}^{t_{i+1}} [S(t_n-r) F(X(r))-S_{h,\tau}^{n-i} \mathcal P_h F_\tau(X(t_i))] \,\mathrm{d}r \Big\|_{L_\omega^p L_\xi^2}\nonumber \\
& + \Big\|\sum_{i=0}^{n-1} \int_{t_i}^{t_{i+1}} [S(t_n-r) G(X(r))-S_{h,\tau}^{n-i} \mathcal P_h G_\tau(X(t_i))] \,\mathrm{d}W \Big\|_{L_\omega^p L_\xi^2} 
=: \sum_{j=1}^3 J^n_j.
\end{align} 
In the sequel, we treat the above three terms one by one.

The estimate \eqref{eht} with $\mu=\nu=1$ yields that 
\begin{align}  \label{j1}
J^n_1
\le C e^{-c t_n} (h+\tau^{1/2} ) \|X_0\|_{L_\omega^p \dot H^1}.
\end{align}
To handle the second term, we decompose it into the following three terms:
\begin{align*} 
J^n  _2 
&\le \sum_{i=0}^{n-1} \int_{t_i}^{t_{i+1}} 
\|S(t_n-r) [F(X(r))-F(X(t_i))] \|_{L_\omega^p L_\xi^2} \,\mathrm{d}r  \\
&\quad + \sum_{i=0}^{n-1} \int_{t_i}^{t_{i+1}} 
\|E_{h,\tau}(t_n-r) F(X(t_i)) \|_{L_\omega^p L_\xi^2} \,\mathrm{d}r \\
&\quad + \sum_{i=0}^{n-1} \int_{t_i}^{t_{i+1}} 
\|S_{h,\tau}^{n-i} \mathcal P_h [F(X(t_i))-F_\tau(X(t_i))]\|_{L_\omega^p L_\xi^2} \,\mathrm{d}r 
=: \sum_{i=1}^3 J^n  _{2i}.
\end{align*} 
The bound \eqref{ana} with $(\mu, \nu)=(1, 0)$ and the dual estimate \eqref{F-} yield
\begin{align*} 
J^n  _{21} 
&\le C \sum_{i=0}^{n-1} \int_{t_i}^{t_{i+1}} (t_n-r)^{-1/2} e^{-c (t_n-r)}
\|F(X(r))-F(X(t_i)) \|_{L_\omega^p \dot H^{-1}} \,\mathrm{d}r \\ 
% &\le C \sum_{i=0}^{n-1} \int_{t_i}^{t_{i+1}} (t_n-r)^{-1/2} e^{-C (t_n-r)} \| (1+\|X(r)\|^q_1+\|X(t_i)\|^q_1 ) \|X(r)-X(t_i)\|\|_{L_\omega^p} \,\mathrm{d}r \\
% & \le C \tau^{1/2}  (1+\|X\|^q_{L_t^\infty L_\omega^{2pq} \dot H^1} ) \|X\|_{\mathcal C_t^{1/2} L_\omega^{2p} L_\xi^2} \cdot (\int_0^\infty r^{-1/2} e^{-C r} \,\mathrm{d}r   ) \\  
& \le C \tau^{1/2} (1+\|X\|^q_{L_t^\infty L_\omega^{2pq} \dot H^1} )
\|X\|_{\mathcal C_t^{1/2} L_\omega^{2p} L_\xi^2}, \quad \gamma\in [0,1],
\end{align*}  
where the elementary estimate $\int_0^\infty r^{-1/2} e^{-c r} \,\mathrm{d}r \le C<\infty$ is used.
Similarly, using \eqref{eht} with $(\mu, \nu) = (1, 0)$ and the embedding \eqref{emb}, we derive  
\begin{align} \label{j22}
J^n_{22}
% &\le C (h+\tau^\frac{1+\gamma}2)  \|F(X)\|_{L_t^\infty L_\omega^p L_\xi^2} (\int_0^{t_n} g^{-\frac{1+\gamma}2} e^{-C g} \,\mathrm{d}g)  \nonumber \\
&\le C (h+\tau^{1/2}) 
(1+\|X\|_{L_t^\infty L_\omega^p \dot H^1}^{q+1}).
\end{align}  
By \eqref{sht-1}, \eqref{F-Ftau}, and the estimate 
$\sum_{k=1}^\infty k^{-1/2} (1+ \tau \lambda^*)^{- k}
   \le C \tau^{-1/2}$,
we get  
\begin{align} \label{j23} 
J^n  _{23} 
& \le \|F(X)-F_\tau(\xi)\|_{L_t^\infty L_\omega^p \dot H^{-1}}
\Big( \tau^{1/2} \sum_{k=1}^\infty k^{-1/2} (1+ \tau \lambda^*)^{- k} \Big) 
\nonumber  \\
% &\le C \tau (1+\|X\|_{L_t^\infty L_\omega^{p(3q+1)} L_\xi^\infty}^{3q+1}) \sum_{i=1}^n\tau e^{-c i \tau}\nonumber \\
&\le C \tau^{1/2} (1+\|X\|_{L_t^\infty L_\omega^{p(2q+1)} \dot H^1}^{2q+1}).
\end{align}  
Combining the above three estimates and Theorem \ref{tm-reg+} implies  
\begin{align}  \label{j2}
J^n_2
& \le C (h+\tau^{1/2}) \Big[(1+\|X\|^q_{L_t^\infty L_\omega^{2pq} \dot H^1} ) \|X\|_{\mathcal C_t^{1/2} L_\omega^{2p} L_\xi^2} \nonumber \\ 
& \quad +(1+\|X\|_{L_t^\infty L_\omega^p \dot H^1}^{q+1})+ \|X\|_{L_t^\infty L_\omega^{p(2q+1)} \dot H^1}^{2q+1} \Big]  \nonumber \\ 
& \le C (h+\tau^{1/2} )  
(1+ \|X_0\|_{L_\omega^{2p(q+1)} \dot H^1}^{2q+1}).
\end{align}

The last term $J^n_3$ can be controlled by using BDG inequalities: 
\begin{align*} 
(J^n_3)^2  
&\le \sum_{i=0}^{n-1} \int_{t_i}^{t_{i+1}} 
\|S(t_n-r) [G(X(r))-G(X(t_i))] \|_{L_\omega^p \mathcal L_2^0}^2 \,\mathrm{d}r  \\
& \quad + C \sum_{i=0}^{n-1} \int_{t_i}^{t_{i+1}} 
\|E_{h,\tau}(t_n-r) G(X(t_i)) \|_{L_\omega^p \mathcal L_2^0}^2 \,\mathrm{d}r \\
& \quad + C \sum_{i=0}^{n-1} \int_{t_i}^{t_{i+1}} 
\|S_{h,\tau}^{n-i} \mathcal P_h [G(X(t_i))-G_\tau(X(t_i))]\|_{L_\omega^p \mathcal L_2^0}^2 \,\mathrm{d}r 
:=\sum_{j=1}^3 (J^n_{3j})^2.
\end{align*} 
For each $q>0$, we have $8(q+1)/(3q+4)<6$ and thus there exists a constant $\gamma \in (0, 1)$ s.t. $\dot H^\gamma \hookrightarrow L_\xi^{q_1^*}$  (as $\dot H^1 \hookrightarrow L_\xi^6$) and $L_\xi^{q_1} \hookrightarrow \dot H^{-\gamma}$ with $q_1:=8(q+1)/(5q+4)$.
Using \eqref{ana} with $(\mu, \nu)=(1, 0)$, \eqref{cq}, \eqref{g-grow+}, and \eqref{est-spde}, we get 
{
\begin{align*} 
(J^n_{31})^2
&\le C \sum_{i=0}^{n-1} \int_{t_i}^{t_{i+1}} (t_n-r)^{-\gamma} e^{-c (t_n-r)} 
\|G(X(r))-G(X(t_i))\|_{L_\omega^p \mathcal L_2^{-\gamma}}^2 \,\mathrm{d}r  \nonumber  \\ 
&\le C \sum_{i=0}^{n-1} \int_{t_i}^{t_{i+1}} (t_n-r)^{-\gamma} e^{-c (t_n-r)} \\
& \qquad \times \|[1+|X(r)|^{q/4}+|X(t_i)|^{q/4}] |X(r)-X(t_i)| \|_{L_\omega^p L_\xi^{q_1}}^2 \,\mathrm{d}r  \nonumber  \\ 
&\le C \sum_{i=0}^{n-1} \int_{t_i}^{t_{i+1}} (t_n-r)^{-\gamma} e^{-c (t_n-r)} \\
& \qquad \times \| [1+\|X(r)\|_{L_\xi^{2(q+1)}}^{q/4}
+ \|X(t_i)\|_{L_\xi^{2(q+1)}}^{q/4} ] \|X(r)-X(t_i)\| \|^2_{L_\omega^p} \,\mathrm{d}r  \nonumber  \\ 
&\le C \tau (1+\|X\|_{L_t^\infty L_\omega^{p q/2} \dot H^1}^{q/4})^2
\|X\|_{\mathcal C_t^{1/2} L_\omega^{2 p} L_\xi^2}^2. 
\end{align*} 
} 
For the second term $J^n_{32}$, we use \eqref{eht} with $(\mu,\nu)=(1-\epsilon, 0)$ and \eqref{g-grow} to derive 
\begin{align*} 
(J^n_{32})^2
&\le C (h^{1-\epsilon}+\tau^{(1-\epsilon)/2})^2 
(1+ \|G(X)\|_{L_t^\infty L_\omega^p \mathcal L_2^0})^2
 \Big(\int_0^\infty r^{-(1-\epsilon)}  e^{-2c r} \,\mathrm{d}r\Big) \nonumber \\
 & \le C (h^{1-\epsilon}+\tau^{(1-\epsilon)/2})^2 
(1 + \|X\|_{L_t^\infty L_\omega^{p(q/4+1)} \dot H^1}^{q/4+1} )^2.
\end{align*} 
For the last term $J^n_{33}$, we use \eqref{sht-1}, the embedding \eqref{emb},  \eqref{G-Gtau}, and the elementary inequality 
$\sum_{k=1}^\infty k^{-1} (1+ \tau \lambda^*)^{-2 k}
 % = -\ln [1- (1+\tau \lambda^*)^{-2}]  
 \le C |\ln \tau|$,
to derive  
\begin{align*} 
(J^n  _{33})^2 
& \le \|G(X)-G_\tau(X)\|^2_{L_t^\infty L_\omega^p \mathcal L_2^{-1}} 
\Big(\sum_{k=1}^\infty k^{-1} (1+ \tau \lambda^*)^{-2 k} \Big)  \nonumber  \\
% &\le C \tau (1+\|X\|_{L_t^\infty L_\omega^{p(3q+1)} L_\xi^\infty}^{3q+1}) \sum_{i=1}^n\tau e^{-c i \tau}\\
% &\le C \tau \|g(X(t_i))-g_\tau(X(t_i))\|_{L_t^\infty L_\omega^p L_\xi^{2(q+1)/(2q+1)}}^2 ( \tau \sum_{k=1}^\infty t_k^{-1} (1+ \tau \lambda^*)^{-2 k}  ) \nonumber  \\
&\le C \tau |\ln \tau| (1+ \|X\|_{L_t^\infty L_\omega^{p(2q+1)} \dot H^1}^{2q+1})^2.
\end{align*}

It follows from the estimate \eqref{est-spde} that 
\begin{align}  \label{j3} 
J^n_3
&\le C (h+\tau^{1/2})  \Big[(1+\|X\|_{L_t^\infty L_\omega^{p q/2} \dot H^1}^{q/4} )\|X\|_{\mathcal C_t^{1/2} L_\omega^{2 p} L_\xi^2} 
+ \|X\|_{L_t^\infty L_\omega^{p(2q+1)} \dot H^1}^{2q+1} \Big]  \nonumber  \\
& \le C (1 + \|X_0\|_{L_\omega^{2p(q+1)} \dot H^1}^{2q+1}).
\end{align} 
Putting the estimates \eqref{j1}, \eqref{j2}, and \eqref{j3} together and noting that the term $|\ln \tau|$ can be absored in the term $\tau^{-\epsilon/2}$ results in
\begin{align*} 
J^n  \le C (1 +\|X_0\|_{L_\omega^{2p(q+1)} \dot H^1}^{2q+1}) (h^{1-\epsilon}+\tau^{(1-\epsilon)/2}).
\end{align*}
Combining this inequality with the estimate  
$\|X_0-\mathcal P_h X_0\|_{L_\omega^p L_\xi^2} \le C h \|X_0\|_{L_\omega^p \dot H^1}$, completes the proof of \eqref{err-aux}.  

To prove \eqref{err-aux+}, it suffices, by the preceding proof, to provide refined estimates for the terms \(J^n_1\), \(J^n_{22}\), \(J^n_{32}\), and \(J^n_{33}\), involving either the spatial convergence rates or the removal of the factor \(|\ln\tau|\), when $d=1$ and $X_0 \in L_\omega^{2p(q+1)} \dot H^{1+\gamma}$ with $\gamma \in [0, 1)$.
At first, the estimate \eqref{eht} with $\mu=\nu=1+\gamma$ yields that 
\begin{align*}
J^n_1
\le C e^{-c t_n} (h^{1+\gamma}+\tau^{(1+\gamma)/2} ) \|X_0\|_{L_\omega^p \dot H^{1+\gamma}}.
\end{align*}
Using \eqref{eht} with $(\mu, \nu) = (1+\gamma, 0)$ and the embedding \eqref{emb}, we derive  
\begin{align*}
J^n_{22}
&\le C (h^{1+\gamma}+\tau^{(1+\gamma)/2})  \|F(X)\|_{L_t^\infty L_\omega^p L_\xi^2} \Big(\int_0^\infty r^{-\frac{1+\gamma}2} e^{-c r} \,\mathrm{d}r \Big)  \nonumber \\
&\le C (h^{1+\gamma}+\tau^{(1+\gamma)/2}) 
(1+\|X\|_{L_t^\infty L_\omega^p \dot H^1}^{q+1}).
\end{align*}  
For the term $J^n_{32}$, we use \eqref{eht} with $(\mu,\nu)=(1+\gamma, 0)$ and \eqref{G21} to derive 
\begin{align*} 
(J^n_{32})^2
&\le C (h^{1+\gamma}+\tau^{(1+\gamma)/2})^2 
(1+ \|G(X)\|_{L_t^\infty L_\omega^p \mathcal L_2^1})^2
 \Big(\int_0^\infty r^{-\gamma}  e^{-2c r} \,\mathrm{d}r\Big) \nonumber \\
 & \le C (h^{1+\gamma}+\tau^{(1+\gamma)/2})^2 
(1 + \|X\|_{L_t^\infty L_\omega^{p(q/4+1)} \dot H^1}^{q/4+1} )^2.
\end{align*} 
For the last term $J^n_{33}$, we use \eqref{sht-1}, \eqref{f-ftau}, the embedding $\dot H^1 \hookrightarrow L_\xi^\infty$,  and the elementary inequality 
$\sum_{k=1}^\infty k^{-1} (1+ \tau \lambda^*)^{-2 k}
 % = -\ln [1- (1+\tau \lambda^*)^{-2}]  
 \le C |\ln \tau|$,
to derive  
\begin{align*} 
(J^n  _{33})^2 
& \le \|G(X)-G_\tau(X)\|^2_{L_t^\infty L_\omega^p \mathcal L_2^0} 
\Big(\tau \sum_{k=1}^\infty (1+ \tau \lambda^*)^{-2 k} \Big)  \nonumber  \\ 
&\le C \tau (1+ \|X\|_{L_t^\infty L_\omega^{p(2q+1)} \dot H^1}^{2q+1})^2.
\end{align*}  
This completes the proof.
\end{proof}

\subsection{Strong Error Estimate for Tamed-FEM}
\label{sec4.5}

As a byproduct of Theorem \ref{tm-aux}, we obtain the following estimates for the difference between the tamed drift and diffusion operators in the 1-D case.

\begin{corollary}  \label{cor-fgtau}
Let $d=1$, $p \ge 2$, $p_0:=\max\{2p(q+1), 2pq(q+1)\}$, $\gamma \in [0, 1)$, $X_0 \in L_\omega^{p_0} \dot H^{1+\gamma}$, and Assumptions \ref{ap} and \ref{ap-tau} hold. 
There exists a constant $C$ s.t.  
\begin{align}  \label{err-fgtau}  
& \|G_\tau(X(t_{n-1}))-G_\tau({\widehat Y}_{n-1}^h)\|_{L_\omega^p \mathcal L_2^0}
+ \|F_\tau(X(t_{n-1}))-F_\tau({\widehat Y}_{n-1}^h)\|_{L_\omega^p \dot H^{-1}} \nonumber \\
& \le e^{CT} (1+\|X_0\|_{L_\omega^{p(q+1)} \dot H^1}^{(q+1)^2}
+\|X_0\|^{(q+1)^2}_{L_\omega^p \dot H^{1+\gamma}})
(h^{1+\gamma}+\tau^{(1+\gamma)/2}), \quad \tau \in (0, \tau_3).
\end{align}    
Moreover, under \eqref{con-uni}, the estimate \eqref{err-fgtau} is uniform in time.  
\end{corollary}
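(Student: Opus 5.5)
The plan is to reduce both terms to the $L^p_\omega L^2_\xi$ error $\|X(t_{n-1})-\widehat Y_{n-1}^h\|$ of Theorem \ref{tm-aux}. The local Lipschitz bounds for the tamed functions in Assumption \ref{ap-tau} supply the link, and the polynomial weights are handled by the one-dimensional embedding $\dot H^1\hookrightarrow L_\xi^\infty$.

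First take the drift term. By \eqref{Ftau-} of Lemma \ref{lm-ftau} (which came from \eqref{ftau-} and \eqref{emb}), pointwise in $\omega$,
\begin{align*}
\|F_\tau(X(t_{n-1}))-F_\tau(\widehat Y_{n-1}^h)\|_{-1}
\le C\bigl(1+\|X(t_{n-1})\|_1^q+\|\widehat Y_{n-1}^h\|_1^q\bigr)\|X(t_{n-1})-\widehat Y_{n-1}^h\|.
\end{align*}
For the diffusion term I will redo the estimate directly rather than quoting \eqref{Gtau-}, since the exponent $2$ there looks like a misprint. By \eqref{cq} and \eqref{ftau-},
\[
\|G_\tau(z)-G_\tau(w)\|_{\mathcal L_2^0}^2\le C_Q\|g_\tau(z)-g_\tau(w)\|^2,
\]
and this is at most $C(1+\|z\|_{L_\xi^\infty}^{q}+\|w\|_{L_\xi^\infty}^{q})^2\|z-w\|^2$. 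Since $d=1$, I replace $L^\infty_\xi$ by $\dot H^1$, so both terms carry the same weight $1+\|X(t_{n-1})\|_1^q+\|\widehat Y_{n-1}^h\|_1^q$. The case $n-1=0$ is trivial: $\widehat Y_0^h=\mathcal P_hX_0$, and the projection error is $O(h)$, which is handled like the remaining cases.

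Next, apply H\"older's inequality in $\omega$ with exponents $2p$ and $2p$:
\[
\text{LHS}\le C\bigl(1+\|X(t_{n-1})\|_{L^{2pq}_\omega\dot H^1}^q+\|\widehat Y_{n-1}^h\|_{L^{2pq}_\omega\dot H^1}^q\bigr)\,\|X(t_{n-1})-\widehat Y_{n-1}^h\|_{L^{2p}_\omega L^2_\xi}.
\]
\begin{itemize}
\item The first factor is bounded uniformly in $n$. For $X$ use Theorem \ref{tm-reg+}, i.e.\ \eqref{spde-hol} with $p$ replaced by $2pq$, which needs $X_0\in L^{2pq(q+1)}_\omega\dot H^1$. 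For $\widehat Y$ use the one-dimensional estimate \eqref{reg-aux+} of Lemma \ref{lm-reg-aux} with $\gamma=0$ and the same exponent. There is no $|\ln\tau|$ factor in $d=1$, and this is exactly why the corollary is restricted to $d=1$. These bounds are of order $(1+\|X_0\|^{q+1})^q$.
\item The second factor is bounded by \eqref{err-aux+} of Theorem \ref{tm-aux} with $p$ replaced by $2p$. This needs $X_0\in L^{4p(q+1)}_\omega\dot H^{1+\gamma}$ and gives $(1+\|X_0\|^{q+1}+\|X_0\|_{\dot H^{1+\gamma}})(h^{1+\gamma}+\tau^{(1+\gamma)/2})$.
\end{itemize}
Multiplying the two factors and applying Young's inequality gives the total power $(q+1)^2$ in \eqref{err-fgtau}. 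The moment requirement is the maximum of the two integrability exponents, which is the $p_0$ of the statement, up to harmless adjustments of the constants $2$ versus $4$.

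The main obstacle is bookkeeping rather than analysis. One must check that Theorem \ref{tm-aux} and Lemma \ref{lm-reg-aux} are applied with the doubled exponent, that the restriction $\tau<\tau_3$ comes from Lemma \ref{lm-reg-aux}, and that uniformity in time under \eqref{con-uni} is inherited. All three ingredients (\eqref{spde-hol}, \eqref{reg-aux+}, \eqref{err-aux+}) are uniform under that condition, and the Lipschitz step involves no time-dependent constants, so the final bound is uniform as well.
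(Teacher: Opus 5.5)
Your proposal is correct and is essentially the paper's own proof. You use the local Lipschitz bounds (\eqref{Ftau-} for $F_\tau$, \eqref{ftau-} with \eqref{cq} for $G_\tau$) together with the one-dimensional embedding $\dot H^1\hookrightarrow L_\xi^\infty$, then H\"older in $\omega$ with exponents $(2p,2p)$, the moment bounds \eqref{est-spde}/\eqref{spde-hol} and \eqref{reg-aux+}, and finally \eqref{err-aux+} for the $L_\omega^{2p}L_\xi^2$ error, exactly as the paper does.

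Deriving the diffusion bound directly rightly avoids the misprinted square in \eqref{Gtau-}. The loose moment bookkeeping ($4p(q+1)$ versus $2p(q+1)$, and higher-moment norms of $X_0$ than the $L_\omega^{p(q+1)}$ norm shown in \eqref{err-fgtau}) is already present in the paper. Your separate remark on $n-1=0$ is unnecessary, since \eqref{err-aux+} already covers $n=0$. It is also slightly off: the claimed rate needs that projection error to be $O(h^{1+\gamma})$, which it is, not merely $O(h)$.
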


 \begin{proof} 
 By \eqref{Ftau-} and the embedding $\dot H^1 \hookrightarrow L_\xi^\infty$, we have 
 \begin{align*}
& \|G_\tau(X(t_{n-1}))-G_\tau({\widehat Y}_{n-1}^h)\|_{L_\omega^p \mathcal L_2^0} 
+ \|F_\tau(X(t_{n-1}))-F_\tau({\widehat Y}_{n-1}^h)\|_{L_\omega^p \dot H^{-1}} \nonumber \\
& \le \|[1 + \|X(t_{n-1})\|_{L_\xi^\infty}^q + \|{\widehat Y}_{n-1}^h\|_{L_\xi^\infty}^q] \|X(t_{n-1})-{\widehat Y}_{n-1}^h\|]\|_{L_\omega^{2p}} \\
& \quad + \|[1 + \|X(t_{n-1})\|_1^q + \|{\widehat Y}_{n-1}^h\|_1^q] \|X(t_{n-1})-{\widehat Y}_{n-1}^h\|]\|_{L_\omega^p} \\
& \le (1 + \|X\|^q_{L_t^\infty L_\omega^{2pq} \dot H^1}
+ \sup_{0 \le n \le N} \|{\widehat Y}_n^h\|^q_{L_\omega^{2pq} \dot H^1})
\|X(t_{n-1})-{\widehat Y}_{n-1}^h\|_{L_\omega^{2p} L_\xi^2}.
\end{align*}  
  Then we conclude \eqref{err-fgtau} by the above estimate, the regularities \eqref{est-spde} and \eqref{reg-aux+}, and the error estimate \eqref{err-aux+}.
 \end{proof}

 \begin{remark}
Unlike the Lipschitz diffusion case studied in \cite{FLZ17, LQ21, LS26, CL26}, in the superlinear diffusion case considered here, one needs boundedness for the \(L_\xi^\infty\)-norm of both the exact solution to Eq.~\eqref{spde} and the corresponding auxiliary process \eqref{aux}.
In this case, to obtain higher-order spatial convergence, one has to replace the term
$\|G(X)\|_{L_t^\infty L_\omega^p \mathcal L_2^0}$
in the estimate of \(J_{32}^n\) (using \eqref{eht} with \((\mu,\nu)=(1+\gamma,1)\) and \eqref{G2}) by
$\|G(X)\|_{L_t^\infty L_\omega^p \mathcal L_2^1}$.
However, according to Remark~\ref{rk-aux}, it is not feasible to establish the \(L_t^\infty L_\omega^p L_\xi^\infty\)- or \(L_t^\infty L_\omega^p \dot H^{1+\gamma}\)-regularity of the auxiliary process \eqref{aux} in the 2-D or 3-D case, although we established such regularity for Eq.~\eqref{spde} in Theorem \ref{tm-reg+}. 
\end{remark}

By combining Lemma~\ref{tm-aux} with a variational approach, we obtain the following strong convergence rate between the solution of Eq.~\eqref{spde} and the tamed-FEM \eqref{t-fem}.

 \begin{theorem}  \label{tm-err}
Let $d=1$, $p^*>p \ge 2$, $\gamma \in [0, 1)$, $X_0 \in L_\omega^{p_0} \dot H^{1+\gamma}$, with $p_0$ given in Corollary \ref{cor-fgtau}, and Assumptions \ref{ap}, \ref{ap-tau}, and \ref{ap-tau'} hold. 
There exist positive constants $\tau_{\max} \in (0, 1]$  and $C$ s.t.  for any $\tau \in (0, \tau_{\max})$,
\begin{align} \label{err} 
& \sup_{1 \le n \le N} \|X(t_n) - Y_n^h \|_{L_\omega^p L_\xi^2} \nonumber \\
& \le e^{CT} (1+\|X_0\|_{L_\omega^{p(q+1)} \dot H^1}^{(q+1)^2}
+\|X_0\|^{(q+1)^2}_{L_\omega^p \dot H^{1+\gamma}})
(h^{1+\gamma}+\tau^{(1+\gamma)/2}).
\end{align} 
Moreover, under the conditions \eqref{con-uni} and $M_0<2 \lambda_1$, with $M_0$ given in \eqref{fgtau'}, the estimate \eqref{err} is uniform in time.
 \end{theorem}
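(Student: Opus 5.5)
We split the error as $X(t_n)-Y_n^h=(X(t_n)-\widehat Y_n^h)+(\widehat Y_n^h-Y_n^h)$. The first part is controlled by \eqref{err-aux+} of Theorem \ref{tm-aux}, so it remains to bound $e_n:=\widehat Y_n^h-Y_n^h\in V_h$ with a variational (energy) argument rather than a mild one. Subtracting \eqref{t-fem} from \eqref{aux+} gives
\begin{align*}
e_n=e_{n-1}+\tau A_h e_n+\tau\mathcal P_h[F_\tau(X(t_{n-1}))-F_\tau(Y_{n-1}^h)]+\mathcal P_h[G_\tau(X(t_{n-1}))-G_\tau(Y_{n-1}^h)]\delta_{n-1}W,
\end{align*}
with $e_0=0$. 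We then insert $\widehat Y_{n-1}^h$ in each nonlinear difference:
\begin{align*}
F_\tau(X(t_{n-1}))-F_\tau(Y_{n-1}^h)=[F_\tau(X(t_{n-1}))-F_\tau(\widehat Y_{n-1}^h)]+[F_\tau(\widehat Y_{n-1}^h)-F_\tau(Y_{n-1}^h)],
\end{align*}
and similarly for $G_\tau$. The first brackets are consistency terms bounded by Corollary \ref{cor-fgtau} at the rate $h^{1+\gamma}+\tau^{(1+\gamma)/2}$. The second brackets are the terms to which the one-sided condition \eqref{FGtau-} applies, since they involve only $e_{n-1}$.

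Testing with $e_n$ and using \eqref{ab} exactly as in the proof of Theorem \ref{tm-lya}, we obtain
\begin{align*}
\|e_n\|^2+2\tau\|\nabla e_n\|^2\le\|e_{n-1}+\tau\mathcal P_h\Delta F_{n-1}+\mathcal P_h\Delta G_{n-1}\delta_{n-1}W\|^2,
\end{align*}
where $\Delta F_{n-1}$ and $\Delta G_{n-1}$ denote the full differences. Next we take $\mathbb E_{n-1}$ of the $p$-th power via the It\^o-formula/Hölder trick of Theorem \ref{tm-lya}, using the continuous-in-$t$ interpolation $I_{n-1}(t)$. This bounds $\mathbb E_{n-1}\|e_n\|^p$ by
\begin{align*}
\bigl(\|e_{n-1}\|^2+2\tau\langle\Delta F_{n-1},e_{n-1}\rangle_{V^*,V}+\tau^2\|\Delta F_{n-1}\|^2+(p-1)\tau\|\Delta G_{n-1}\|_{\mathcal L_2^0}^2\bigr)^{p/2}.
\end{align*}

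In each quadratic expression we split the differences with Young's inequality, with weights chosen so that the monotone part carries the constant $(p^*-1)$ instead of $(p-1)$. Applying \eqref{FGtau-} then leaves $(1+M_0\tau)\|e_{n-1}\|^2$ plus cross terms. These cross terms are at most $\epsilon\tau\|e_{n-1}\|_1^2+C_\epsilon\tau\|F_\tau(X(t_{n-1}))-F_\tau(\widehat Y_{n-1}^h)\|_{-1}^2$, and the analogous $C\tau\|\cdot\|_{\mathcal L_2^0}^2$ term for $G_\tau$. The strict gap $p^*>p$ is exactly what leaves room for the $G$ cross terms.

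\textbf{Main obstacle.} The drift cross term pairs with $e_{n-1}$ in $\dot H^1$, whereas the dissipation $2\tau\|\nabla e_n\|^2$ sits at level $n$. The $\tau^2\|\Delta F\|^2$ part is absorbed by the $\tau\|F_\tau\|^2$ slot in \eqref{FGtau-}. For the index mismatch, I would first try testing the error equation with $e_n$ and writing $\langle\cdot,e_n\rangle=\langle\cdot,e_{n-1}\rangle+\langle\cdot,e_n-e_{n-1}\rangle$. The increment $\|e_n-e_{n-1}\|^2$ absorbs the second piece, while $\langle \mathcal P_h(\ldots),e_{n-1}\rangle$ uses $\|\nabla e_{n-1}\|^2$. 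That quantity is then telescoped into the previous step's dissipation term, which requires summing over $n$ rather than a pure one-step Lyapunov argument.

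Consequently, I would work with $\mathbb E\|e_n\|^p$ summed, i.e., a discrete Gronwall on $a_n:=\mathbb E\|e_n\|^p+\tau\sum_{k\le n}\mathbb E\|e_k\|^{p-2}\|\nabla e_k\|^2$. Applying \eqref{in-ep} to the power $p/2$ gives
\begin{align*}
a_n\le(1+C\tau)a_{n-1}+C\tau(h^{1+\gamma}+\tau^{(1+\gamma)/2})^p K(X_0),
\end{align*}
where $K(X_0)$ comes from Corollary \ref{cor-fgtau} and the moment bounds \eqref{est-spde}, \eqref{reg-aux+}. Discrete Gronwall then yields \eqref{err} with the factor $e^{CT}$.

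For the uniform-in-time statement, $M_0<2\lambda_1$ together with the discrete Poincar\'e bound $\|\nabla e_n\|^2\ge\lambda^*\|e_n\|^2$ (with $\lambda^*$ close to $\lambda_1$ for small $h$) makes the Gronwall factor $1-c\tau$. Under \eqref{con-uni}, Corollary \ref{cor-fgtau} and Theorem \ref{tm-aux} are already uniform in time, so the recursion sums geometrically and gives a $T$-independent constant.
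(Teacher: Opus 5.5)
Your architecture matches the paper's: the split via \eqref{err-aux+}, the error equation for $e_n$, inserting $\widehat Y_{n-1}^h$, the It\^o/H\"older trick for $\mathbb E_{n-1}\|\cdot\|^p$, splitting the $G$-difference with $(1+\epsilon_2)(p-1)\le p^*-1$, then \eqref{FGtau-} and Corollary \ref{cor-fgtau}. However, your treatment of the drift consistency term $a_{n-1}:=F_\tau(X(t_{n-1}))-F_\tau(\widehat Y_{n-1}^h)$ has a genuine gap. You put the full difference $a_{n-1}+b_{n-1}$, with $b_{n-1}:=F_\tau(\widehat Y_{n-1}^h)-F_\tau(Y_{n-1}^h)$, inside $\|e_{n-1}+\tau\mathcal P_h\Delta F_{n-1}+\cdots\|^2$, and two things go wrong.

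First, the expansion produces $\tau^2\|\mathcal P_h a_{n-1}\|^2$ and a cross term with $b_{n-1}$. The $\tau\|F_\tau(u)-F_\tau(v)\|^2$ slot of \eqref{FGtau-} covers only $b_{n-1}$. Moreover, $\|\mathcal P_h a_{n-1}\|$ is an $L^2_\xi$ quantity that the $\dot H^{-1}$ bound \eqref{err-fgtau} does not control without an inverse inequality, i.e.\ without a restriction $\tau\lesssim h^2$.

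Second, $a_{n-1}$ ends up paired with $e_{n-1}$, and your telescoping repair does not close for $p>2$. Once $\epsilon\tau\|\nabla e_{n-1}\|^2$ sits in the $b$-slot of \eqref{in-ep} under the power $p/2$, you get $C_\epsilon\tau\|\nabla e_{n-1}\|^p$. This is not dominated by $\tau\|e_{n-1}\|^{p-2}\|\nabla e_{n-1}\|^2$, since on $V_h$ one can have $\|\nabla e\|\sim h^{-1}\|e\|$. So the recursion $a_n\le(1+C\tau)a_{n-1}+\cdots$ is not justified. Also, a functional containing $\tau\sum_{k\le n}$ grows with $t_n$, so it cannot give the $T$-independent bound through a factor $1-c\tau$.

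The missing idea is to split \emph{before} applying \eqref{ab}. Keep $2\tau\langle a_{n-1},e_n\rangle$ paired with $e_n$, and route only $\tau\mathcal P_h b_{n-1}$ and the full $G$-difference through $e_{n-1}$ and $e_n-e_{n-1}$. Then $2\tau\langle a_{n-1},e_n\rangle\le\epsilon\tau\|\nabla e_n\|^2+C_\epsilon\tau\|a_{n-1}\|_{-1}^2$ is absorbed by the same-step dissipation $2\tau\|\nabla e_n\|^2$. Poincar\'e on the remainder holds with the continuous $\lambda_1$ because $V_h\subset\dot H^1$, so no smallness of $h$ or discrete constant $\lambda^*$ is needed. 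This gives \eqref{en+}, namely $[1+2(\lambda_1-\epsilon)\tau]\|e_n\|^2\le\|R_{n-1}\|^2+C\tau\|a_{n-1}\|_{-1}^2$, where $R_{n-1}$ contains only $b_{n-1}$ and the $G$-difference. In particular, the $\tau^2$ term involves only $b_{n-1}$.

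From there the argument is a genuine one-step recursion: $\mathbb E\|e_n\|^p\le L(\tau)\,\mathbb E\|e_{n-1}\|^p+C\tau\bigl(\|a_{n-1}\|_{L^p_\omega\dot H^{-1}}^p+\|G_\tau(X(t_{n-1}))-G_\tau(\widehat Y_{n-1}^h)\|_{L^p_\omega\mathcal L_2^0}^p\bigr)$. Here $L(\tau)\le1+C\tau$ in general, and $L(\tau)\le1-C_0\tau$ when $M_0<2\lambda_1$. No index mismatch or summed functional arises.

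Alternatively, in $d=1$ you could bound $a_{n-1}$ in $L^2_\xi$ via $\dot H^1\hookrightarrow L^\infty_\xi$, exactly as the paper does for the $G$-part of Corollary \ref{cor-fgtau}. That would also rescue your ordering, but as written you rely only on the $\dot H^{-1}$ bound.
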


 \begin{proof} 
 Let $0 \le n \le N$ and denote  
$e_n^h:={\widehat Y}_n^h -Y_n^h$. 
Then $e_n^h  \in V_h$ with vanishing initial datum $e^h_0=0$.
In terms of \eqref{aux+} and \eqref{t-fem}, it is clear that  
\begin{align*}
& e_n^h - e_{n-1}^h - \tau A_h e_n^h \\
& = \tau \mathcal P_h [F_\tau(X(t_{n-1}))-F_\tau(Y_{n-1}^h)]  
+\mathcal P_h [G_\tau(X(t_{n-1}))-G_\tau(Y_{n-1}^h)] \delta_{n-1} W.
\end{align*}
Testing with $e_n^h$ on both sides of the above equation and using \eqref{ab}, we obtain 
\begin{align*}
& \|e_n^h\|^2-\|e_{n-1}^h\|^2 + \|e_n^h -e_{n-1}^h\|^2 +2 \tau \|\nabla e_n^h  \|^2\\
& =  2\tau \langle F_\tau(X(t_{n-1}))-F_\tau({\widehat Y}_{n-1}^h), e_n^h\rangle _{V^*, V}
+ 2\tau \langle  F_\tau({\widehat Y}_{n-1}^h)-F_\tau(Y_{n-1}^h), e_{n-1}^h\rangle_{V^*, V} \\
 & + 2\langle e_n^h  -e_{n-1}^h, \tau [F_\tau({\widehat Y}_{n-1}^h)-F_\tau(Y_{n-1}^h)]+[G_\tau(X(t_{n-1}))-G_\tau(Y_{n-1}^h)]\delta_{n-1} W\rangle \\
 & + 2\langle e_{n-1}^h , (G_\tau(X(t_{n-1}) )-G_\tau(Y_{n-1}^h))\delta_{n-1} W\rangle .
\end{align*} 
By the Poincar\'e and Young inequalities, for any small $\epsilon>0$, we have 
\begin{align}\label{en+}
	& [1+2(\lambda_1-\epsilon)\tau]\|e_n^h\|^2
	\le \|R_{n-1}\|^2
	+ C \tau \|F_\tau(X(t_{n-1}))-F_\tau({\widehat Y}_{n-1}^h)\|_{-1}^2,
\end{align} 
with $R_{n-1}:=e_{n-1}^h+\tau \mathcal P_h [F_\tau({\widehat Y}_{n-1}^h)-F_\tau(Y_{n-1}^h)] + \mathcal P_h [G_\tau(X(t_{n-1}))-G_\tau(Y_{n-1}^h)]\delta_{n-1} W$.
Then, taking the $p/2$-th power and $\mathbb E_{n-1}[\cdot]$ and using \eqref{in-ep}, we obtain
\begin{align} \label{ehn}
	\mathbb E_{n-1} \|e^h_n\|^p
&  \le \frac{(1+\epsilon\tau)}{[1+ 2 (\lambda_1-\epsilon) \tau]^{p/2}} \mathbb E_{n-1}\|R_{n-1}\|^p + C \tau \|F_\tau(X(t_{n-1}))-F_\tau({\widehat Y}_{n-1}^h)\|_{-1}^p.
\end{align}

For $0\le t\le \tau$, set
\begin{align*}
J_{n-1}(t):& = e_{n-1}^h+\tau \mathcal P_h [F_\tau({\widehat Y}_{n-1}^h)-F_\tau(Y_{n-1}^h)] \\
& \quad + \mathcal P_h [G_\tau(X(t_{n-1}))-G_\tau(Y_{n-1}^h)] [W(t_{n-1}+t)-W(t_{n-1})].
\end{align*}
Then $J_{n-1}(0)=e_{n-1}^h+\tau \mathcal P_h [F_\tau({\widehat Y}_{n-1}^h)-F_\tau(Y_{n-1}^h)]$ and $J_{n-1}(\tau)=R_{n-1}$.  
Applying the It\^o formula \eqref{ito} to $\|J_{n-1}(t)\|^p$, followed by taking $\mathbb E_{n-1}$, and utilizing the estimate  
\begin{align*}
\|[G_\tau(X(t_{n-1}))-G_\tau(Y_{n-1}^h)]^* J_{n-1}(t)\|_{U_0}
&  \le \|G_\tau(X(t_{n-1}))-G_\tau(Y_{n-1}^h)\|_{\mathcal L_2^0} \|J_{n-1}(t)\|,
\end{align*}   
and H\"older inequality, we infer
\begin{align*}
& \frac{\,\mathrm{d}}{\,\mathrm{d} t}\mathbb E_{n-1}\|J_{n-1}(t)\|^p \\
&=\frac p2\mathbb E_{n-1} [\|J_{n-1}(t)\|^{p-2}\|G_\tau(X(t_{n-1}))-G_\tau(Y_{n-1}^h)\|_{\mathcal L_2^0}^2 ] \\
& \quad + \frac12 p(p-2) \mathbb E_{n-1} [\|J_{n-1}(t)\|^{p-4}\|[G_\tau(X(t_{n-1}))-G_\tau(Y_{n-1}^h)]^*J_{n-1}(t)\|_{U_0}^2 ]\\
&\le \frac12 p(p-1) \|G_\tau(X(t_{n-1}))-G_\tau(Y_{n-1}^h)\|_{\mathcal L_2^0}^2 \mathbb E_{n-1}\|J_{n-1}(t)\|^{p-2}.
\end{align*} 
By H\"older inequality, we have
$\mathbb E_{n-1}\|J_{n-1}(t)\|^{p-2} \le (\mathbb E_{n-1}\|J_{n-1}(t)\|^p)^{1-2/p}$.
Substituting this into the above estimate yields
$$\frac{\,\mathrm{d}}{\,\mathrm{d} t} (\mathbb E_{n-1} \|J_{n-1}(t)\|^p)^{2/p} \le (p-1)\|[G_\tau(X(t_{n-1}))-G_\tau(Y_{n-1}^h)]\|_{\mathcal L_2^0}^2.$$
Integrating over $[0,\tau]$ and using Young inequality yield
\begin{align*}
& \mathbb E_{n-1} \|R_{n-1}\|^p \\
& \le (\|e_{n-1}^h+\tau \mathcal P_h [F_\tau({\widehat Y}_{n-1}^h)-F_\tau(Y_{n-1}^h)]\|^2 \nonumber \\
& \quad + (p-1) \tau \|G_\tau(X(t_{n-1}))-G_\tau(Y_{n-1}^h)\|_{\mathcal L_2^0}^2 )^{p/2} \nonumber \\
& \le (\|e_{n-1}^h+\tau \mathcal P_h [F_\tau({\widehat Y}_{n-1}^h)-F_\tau(Y_{n-1}^h)]\|^2 \nonumber \\
& \quad + (1+\epsilon_2) (p-1)\tau \|G_\tau({\widehat Y}_{n-1}^h))-G_\tau(Y_{n-1}^h)\|_{\mathcal L_2^0}^2 \nonumber \\
& \quad + C_{\epsilon_2} \tau \|G_\tau(X(t_{n-1}))-G_\tau({\widehat Y}_{n-1}^h)\|_{\mathcal L_2^0}^2 )^{p/2} \nonumber \\
& \le (\|e_{n-1}^h\|^2 + C_{\epsilon_2}  \tau \|G_\tau(X(t_{n-1}))-G_\tau({\widehat Y}_{n-1}^h)\|_{\mathcal L_2^0}^2 \\
& \quad + \tau [ 2 \langle F_\tau({\widehat Y}_{n-1}^h)-F_\tau(Y_{n-1}^h), e_{n-1}^h \rangle_{V^*, V} + \tau \|F_\tau({\widehat Y}_{n-1}^h)-F_\tau(Y_{n-1}^h)\|^2   \nonumber \\
& \quad + (1+\epsilon_2) (p-1) \|G_\tau({\widehat Y}_{n-1}^h))-G_\tau(Y_{n-1}^h)\|_{\mathcal L_2^0}^2]  )^{p/2}.
\end{align*}
Then, by \eqref{FGtau-} with $(1+\epsilon_2) (p-1) \le p^*-1$ and \eqref{in-ep}, we have 
\begin{align*}
\mathbb E_{n-1} \|R_{n-1}\|^p 
& \le ([1+ M_0 \tau] \|e_{n-1}^h\|^2
+ C_{\epsilon_2} \tau \|G_\tau(X(t_{n-1}))-G_\tau({\widehat Y}_{n-1}^h)\|_{\mathcal L_2^0}^2)^{p/2} \nonumber \\ 
& \le (1+\epsilon \tau) [1+ M_0 \tau]^{p/2} \|e_{n-1}^h\|^p
+ C_{\epsilon_2, \epsilon} \tau \|G_\tau(X(t_{n-1}))-G_\tau({\widehat Y}_{n-1}^h)\|_{\mathcal L_2^0}^p.  
\end{align*} 
By \eqref{ehn}, we obtain
\begin{align} \label{ehn-ind}
&\mathbb E_{n-1} \|e^h_n\|^p
 \le \frac{(1+\epsilon\tau)^2 [1+ M_0 \tau]^{p/2} }{[1+ 2 (\lambda_1-\epsilon) \tau]^{p/2} } \|e_{n-1}^h\|^p \nonumber \\
& \quad + C \tau [\|F_\tau(X(t_{n-1}))-F_\tau({\widehat Y}_{n-1}^h)\|_{-1}^p 
+ \|G_\tau(X(t_{n-1}))-G_\tau({\widehat Y}_{n-1}^h)\|_{\mathcal L_2^0}^p].
\end{align} 
Then we conclude \eqref{err} by taking expectations above and using the estimate \eqref{err-fgtau}.

When $M_0<2 \lambda_1$, we conclude \eqref{err} for the infinite time horizon by  induction, \eqref{ehn-ind}, \eqref{l-tau}, \eqref{err-fgtau}, and the claim: there exist constants $C_0>0$ and $\tau_4 \in (0, 1]$ s.t. 
\begin{align} \label{l-tau}
L(\tau):=\frac{(1+\epsilon\tau)^2[1+M_0\tau]^{p/2}}{[1+2(\lambda_1-\epsilon)\tau]^{p/2}}
\le 1- C_0 \tau,
\quad \forall ~ \tau \in (0, \tau_4).
\end{align} 
Indeed, the function $L$ has a Taylor expansion around the origin: 
$L(\tau) = 1 + [2 \epsilon + p(M_0-2\lambda_1 +2 \epsilon)/2] \tau + O(\tau^2)$.
Since \(M_0 < 2\lambda_1\), we may choose \(\epsilon > 0\) sufficiently small s.t. $2 \epsilon + p(M_0-2\lambda_1 +2 \epsilon)/2<0$. 
So there exist constants $c_0>0$ and $\tau_4 \in (0, 1]$ s.t. $L(\tau) \le 1-c_0 \tau$ for all $\tau \in (0, \tau_4)$.
Differentiating
$F(\tau) := \ln L(\tau) = 2\ln(1+\epsilon\tau) + p\ln(1+M_0\tau)/2 - p\ln(1+2(\lambda_1-\epsilon)\tau)/2$, we obtain
\[
F'(\tau) = \frac{2\epsilon}{1+\epsilon\tau}
- \frac{p [2(\lambda_1-\epsilon) - M_0]}{2(1+M_0\tau)(1+2(\lambda_1-\epsilon)\tau)}<0
\]
for sufficiently small \(\epsilon\) and for all \(\tau \in (0, 1)\). Hence \(F\) is strictly decreasing on \((0,1)\) with \(F(0)=0\), we get $F(\tau) < 0$, which shows $L(\tau) < 1$. 
On \([\tau_4,1]\), \(L(\tau) < 1\), so there exists \(M_1 < 1\) s.t.
$L(\tau) \le M_1$ for all $\tau \in [\tau_4,1]$.
Now choose \(C_0 =\max\{c_0, (1-M) \tau_4^{-1}\}\), which can be shown to satisfy \eqref{l-tau}. 
\end{proof}

 \begin{remark}  \label{rk-con} 
As illustrated in Example \ref{rk-ex} and Remark \ref{rk-fgtau'}, in the Lipschitz diffusion case, letting $L_g$ denote the Lipschitz constant of $g$ and $M_1$ an upper bound for $f_\tau'$, the uniform-in-time condition \eqref{con-uni}, together with $M_0<2\lambda_1$, reduces to $2 \max\{L_1, M_1\} + C_Q(p-1)L_g^2< 2\lambda_1$, which coincides with that in \cite{Liu26}.
\end{remark}

 \bibliographystyle{plain}
  \bibliography{bib.bib}
\end{document}